\newtheorem{theorem}{Theorem}
\newtheorem{proposition}[theorem]{Proposition}
\newtheorem{lemma}[theorem]{Lemma}
\newtheorem{remark}[theorem]{Remark}
\newcommand{\R}{\mathbb{R}}
\newcommand{\N}{\mathbb{N}}
\renewcommand{\d}{\mathrm{d}}
\renewcommand{\O}{{\mathcal{O}}}
\newcommand{\comp}{\mathrm{comp}}
\newcommand{\vol}{\mathrm{vol}}
\newcommand{\per}{\mathrm{per}}
\newcommand{\J}{\mathrm{J}}
\newcommand{\E}{\mathrm{E}}
\newcommand{\G}{\mathrm{G}}
\newcommand{\hd}{\mathcal{H}}
\newcommand{\TV}{\mathrm{TV}}
\newcommand{\sym}{{\mathrm{sym}}}
\newcommand{\ad}{{\mathrm{ad}}}
\renewcommand{\div}{\mathrm{div}}
\newcommand{\tr}{\mathrm{tr}}
\newcommand{\co}{\mathrm{co}}
\newcommand{\dist}{\mathrm{dist}}
\newcommand{\Id}{\mathds{1}}
\newcommand{\diag}{\mathrm{diag}}
\newcommand{\cell}{\mathrm{cell}}
\newcommand{\sigmain}{\sigma^\mathrm{cyl}}
\newcommand{\sigmarad}{\sigma^\mathrm{rad}}
\newcommand{\sigmashear}{\sigma^\mathrm{shear}}
\newcommand{\sigmasheartilde}{\tilde\sigma^\mathrm{shear}}
\newcommand{\opt}{\mathrm{opt}}
\newcommand{\notinclude}[1]{}
\begin{document}

\title{Optimal fine-scale structures in compliance minimization for a uniaxial load in three space dimensions}
\author{Jonas Potthoff \and Benedikt Wirth}
\date{}
\maketitle
\pagenumbering{arabic}

\begin{abstract}
We consider the shape and topology optimization problem to design a structure
that minimizes a weighted sum of material consumption and (linearly) elastic compliance under a fixed given boundary load.
As is well-known, this problem is in general not well-posed since its solution typically requires the use of infinitesimally fine microstructure.
Therefore we examine the effect of singularly perturbing the problem by adding the structure perimeter to the cost.
For a uniaxial and a shear load in two space dimensions, corresponding energy scaling laws were already derived in the literature.
This work now derives the scaling law for the case of a uniaxial load in three space dimensions,
which can be considered the simplest three-dimensional setting.
In essence, it is expected (and confirmed in this article) that for a uniaxial load the compliance behaves almost like the dissipation in a scalar flux problem
so that lower bounds from pattern analysis in superconductors can directly be applied.
The upper bounds though require nontrivial modifications of the constructions known from superconductors.
Those become necessary since in elasticity one has the additional constraint of torque balance.
\end{abstract}

\section{Introduction}
Using the concept of energy scaling laws, this article predicts the optimal (or rather an almost optimal) shape
of a three-dimensional structure under a uniaxial tension or compression load.
Optimality here is with respect to a cost that consists
of the compliance (a measure of structural weakness), the material consumption, and the perimeter or surface area (a measure of complexity, for instance during production of the structure).
Our work is essentially a continuation and extension of \cite{KoWi14} as well as \cite{ChKoOt04,ChCoKo08}:
The former solves the same problem in two space dimensions,
and the latter solves a closely related pattern analysis problem for intermediate states in type-I superconductors,
whose lower bounds and basic patterns can (up to minor modifications) directly be applied here.
The major difficulty in and contribution of the current work is to turn those basic patterns into feasible constructions for the compliance minimization setting.

\subsection{Motivation}

Elastic shape optimization or compliance minimization is a well-studied field
with lots of results since the 1960s on the minimum possible compliance for a given material volume and on the corresponding optimal microstructures,
see \cite{HaSh63,KoSt86,Al02,Mi02} and the references therein.
It is of interest not only for material design,
but also for getting some understanding of structures appearing in nature,
adopting the hypothesis that biological evolution actually solves an optimization problem.
A common example is the microstructure of bones (`spongiosa').
Since compliance minimization in general leads to non-natural, infinitely fine microstructure,
there must be some additional complexity-limiting mechanism involved.
Various aspects might potentially contribute to this complexity limitation
such as growth processes, energy consumption for remodelling and maintenance, etc.
In absence of any corresponding biological model we will consider surface area or perimeter as a measure of structural complexity,
since remodelling of biological structures typically happens at their surface.
Given the fine length scales of many biological structures (pore sizes in human trabecular bone range from tens to hundreds of micrometres)
% file:///tmp/2011_DOKTOR_Pore_size_distribution_trabecular_bone_MIP_vs_image_analisis.pdf
the complexity limitation cannot be very strong.
Therefore it is natural to examine the regime of small perimeter penalization.
Unfortunately, the optimal structures in this regime are far too complex for a numerical resolution,
but they are amenable to asymptotic analysis, for instance in the form of scaling laws as will be proved here.

From a more mathematical viewpoint, elastic shape optimization or compliance minimization serves as a formidable model for the study of energy-driven pattern formation. Similarly to many other systems studied in the literature (such as thin sheets, micromagnetics, martensite models, or nonconvex versions of optimal transport)
it seems to exhibit several regimes of very distinct behaviour and a rich class of possible patterns. The task then is to quantify all parameter regimes and study the full phase diagrams of these systems,
and our work is in line with this general objective.
In the specific setting of compliance minimization
the diverse pattern types and their levels of complexity can be controlled by the applied boundary load.
This diversity is inherited from the problem without perimeter regularization, as becomes apparent already in two space dimensions:
If the macroscopic principal stresses have opposite sign, then the situation is known to be very rigid in that any optimal microgeometry must resemble a rank-2 laminate \cite{AlAu99}.
In contrast, if the macroscopic principal stresses have equal sign, then there is a high degree of freedom with a multitude of optimal microgeometries besides sequential laminates
(such as the ``confocal ellipse'' construction based on elliptic inclusions at all length scales \cite{GrKo95}
or the ``Vigdergauz construction'' based on more complicated inclusions at a single length scale \cite{GrKo95b}).
Sometimes, microstructure might not even be necessary: The optimal geometry for a unit disc domain under hydrostatic pressure is an annulus
(which is a special case of the confocal ellipse construction).
In three space dimensions it is expected that combinations of these properties may occur.
The setting of compliance minimization shares some features with patterns formed in thin sheets or martensites.
In the former, there exist regimes of simple hanging drapes with dyadic coarsening structures along essentially just one dimension \cite{VaPiBr+11},
but also regimes with complicated multidirectional patterns as in crumpling paper \cite{CoMa08}.
Likewise, in martensites plain dyadic branching patterns occur in the simplest situation \cite{KoMu94},
while in some shape memory alloys there seems to be enormous flexibility in the possible patterns \cite{RuTr22}.

In this article we study the borderline case between principal stresses of equal and different sign, the situation of just a single nonzero principal stress.
This represents the simplest possible situation and therefore should be studied first.
The reason is its resemblance to pattern formation problems of scalar conservation laws or equivalently of vector fields:
While in elasticity one in principal has conservation of the vector-valued momentum and thus has to work with tensor fields (the stress),
many other pattern formation problems just deal with vector fields.
Examples include micromagnetic structures in ferromagnets \cite{ChKo98,ChKoOt99} or the intermediate state of type-I superconductors \cite{ChKoOt04,ChCoKo08} (in both cases the field is the magnetic field).
Applying a uniaxial load now implies that mainly the momentum along only one direction (say the vertical, $z$-direction) is transmitted
so that the corresponding row of the stress tensor behaves just like the magnetic field in the above problems.

We will analyse the pattern formation problem by an energy scaling law (a common tool in the theory of pattern formation),
that is, we will quantify how the total cost scales in both the strength of the load and of the perimeter regularization.
To this end we have to prove upper bounds (via explicit geometric construction) and matching lower bounds.
For the latter it turns out we can almost completely adapt the lower bounds from \cite{ChKoOt99,ChKoOt04} for patterns in type-I superconductors.
The upper bounds are more challenging:
The basic geometric shapes manifesting in type-I superconductors can indeed be adapted as well,
but modifications have to be applied that account for the balance of torque in addition to linear momenta.
While this is relatively straightforward in two space dimensions \cite{KoWi14} it becomes considerably more difficult in three space dimensions.

\subsection{Problem formulation and main result}\label{sec:formulation}

We fix a design domain $\Omega=(0,\ell)^2\times(0,L)\subset\R^3$ with square base
and consider the optimization of the geometry and topology of a structure $\O\subset\Omega$.
The set $\O$ here represents the region occupied by an elastic material, while $\Omega\setminus\O$ is void.
On the boundary $\partial\Omega$ of the design domain we apply a uniaxial, vertical boundary load of strength $F\in\R$ to be supported by $\O$, see \cref{fig:setting} left.
In more detail, abbreviating
\begin{equation*}
\hat\sigma=Fe_3\otimes e_3
\quad\text{for the Euclidean basis vector }
e_3=(0\ 0\ 1)^T,
\end{equation*}
we apply the stress (force per area) $\hat\sigma n$ on all of $\partial\Omega$, where $n$ denotes the unit outward normal to $\partial\Omega$
(note that this stress is nonzero only at the top and bottom face $\Gamma_{\mathrm{t}}=(0,\ell)^2\times\{L\}$ and $\Gamma_{\mathrm{b}}=(0,\ell)^2\times\{0\}$ of $\Omega$).
Throughout we will employ linearized elasticity to describe equilibrium displacements and stresses,
which is appropriate for small displacements and strains as they occur for instance in trabecular bone.
Furthermore, for simplicity we will assume $\O$ to consist of a homogeneous, isotropic material with zero Poisson ratio
(a nonzero Poisson ratio is not expected to qualitatively change the result, see \cref{rem:Poisson}, but would introduce substantial additional notation).
This implies that the material response to stress is governed by a single material parameter, the shear modulus (or second Lam\'e parameter) $\mu>0$.

The cost function with respect to which we optimize the structure $\O$ is a weighted sum
of its so-called compliance $\comp^{\mu,F,\ell,L}(\O)$ under the applied load, its volume $\vol(\O)$, and its surface area or perimeter $\per_\Omega(\O)$,
\begin{equation}\label{eqn:cost}
\J^{\alpha,\beta,\varepsilon,\mu,F,\ell,L}(\O)=\alpha \, \comp^{\mu,F,\ell,L}(\O)+\beta \, \vol(\O)+\varepsilon \, \per_\Omega(\O),
\end{equation}
where $\alpha,\beta,\varepsilon>0$ are positive weights and the compliance will depend on the material and configuration parameters $\mu,F,\ell,L$.
The corresponding optimization problem is commonly known as compliance minimization with perimeter regularization
(in the literature the volume is sometimes constrained instead of being part of the cost function).
The compliance is the work performed by the applied load and can in linearized elasticity also be interpreted as the stored elastic energy.
If $\partial\O$ does not contain the top and bottom face of $\Omega$ and thus cannot support the load, one sets $\comp^{\mu,F,\ell,L}(\O)=\infty$.
Since in linearized elasticity the compliance is known to be invariant under a sign change of the load
we will without loss of generality assume a tensile load $F>0$ throughout.
The volume and perimeter serve as measures for material consumption or weight and structural complexity, respectively.
While for vanishing perimeter regularization $\varepsilon=0$ the cost $\J^{\alpha,\beta,\varepsilon,\mu,F,\ell,L}$ admits no minimizer
(instead, an infinitely fine microstructure, a so-called rank-1-laminate is optimal \cite{Al02})
optimal structures $\O$ are known to exist for $\varepsilon>0$ \cite{AmBu93}.

Note that by $\per_\Omega(\O)$ we mean the perimeter relative to the open domain $\Omega$ so that components of $\partial\O$ inside $\partial\Omega$ do not contribute to the cost.
The reason is that unlike the surface area of $\partial\O\cap\Omega$ in the domain interior,
the surface area of $\partial\O\cap\partial\Omega$ within the domain boundary is not really connected to structural complexity.
Nevertheless one can also consider the alternative cost
\begin{equation}\label{eqn:costWithBoundary}
\tilde\J^{\alpha,\beta,\varepsilon,\mu,F,\ell,L}(\O)=\alpha \, \comp^{\mu,F,\ell,L}(\O)+\beta \, \vol(\O)+\varepsilon \, \per_{\R^3}(\O)
\end{equation}
with $\per_{\R^3}(\O)$ the full perimeter of $\O$, viewed as a subset of $\R^3$.
In that case the perimeter contribution from $\partial\O\cap\partial\Omega$ will sometimes dominate the cost.
For the sake of completeness we will treat this setting alongside the one of actual interest.

\begin{figure}
	\includegraphics[scale=1]{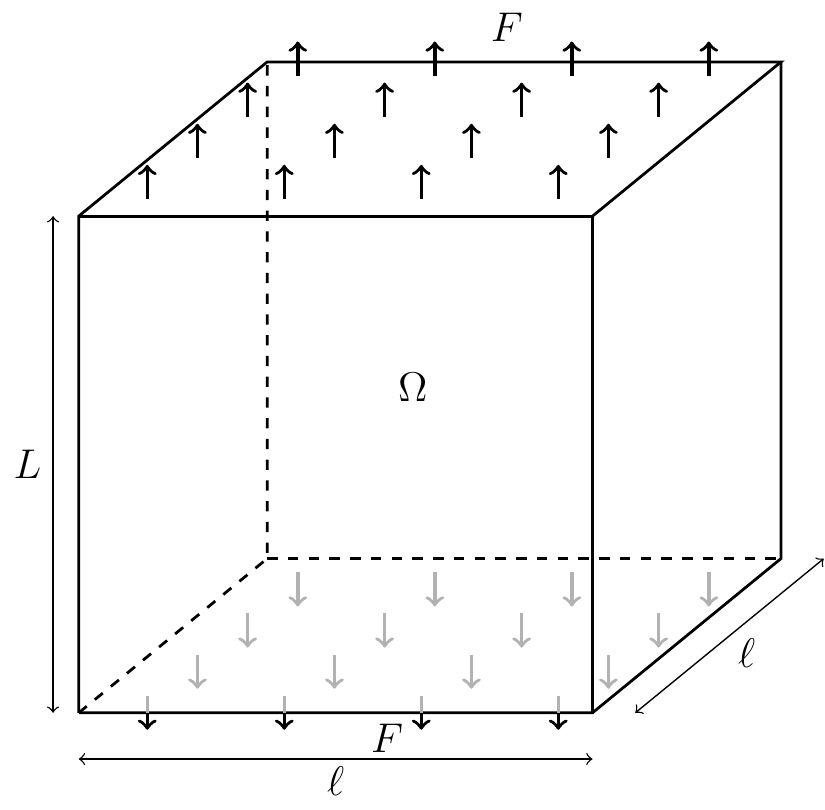} 
	\includegraphics[scale=0.29]{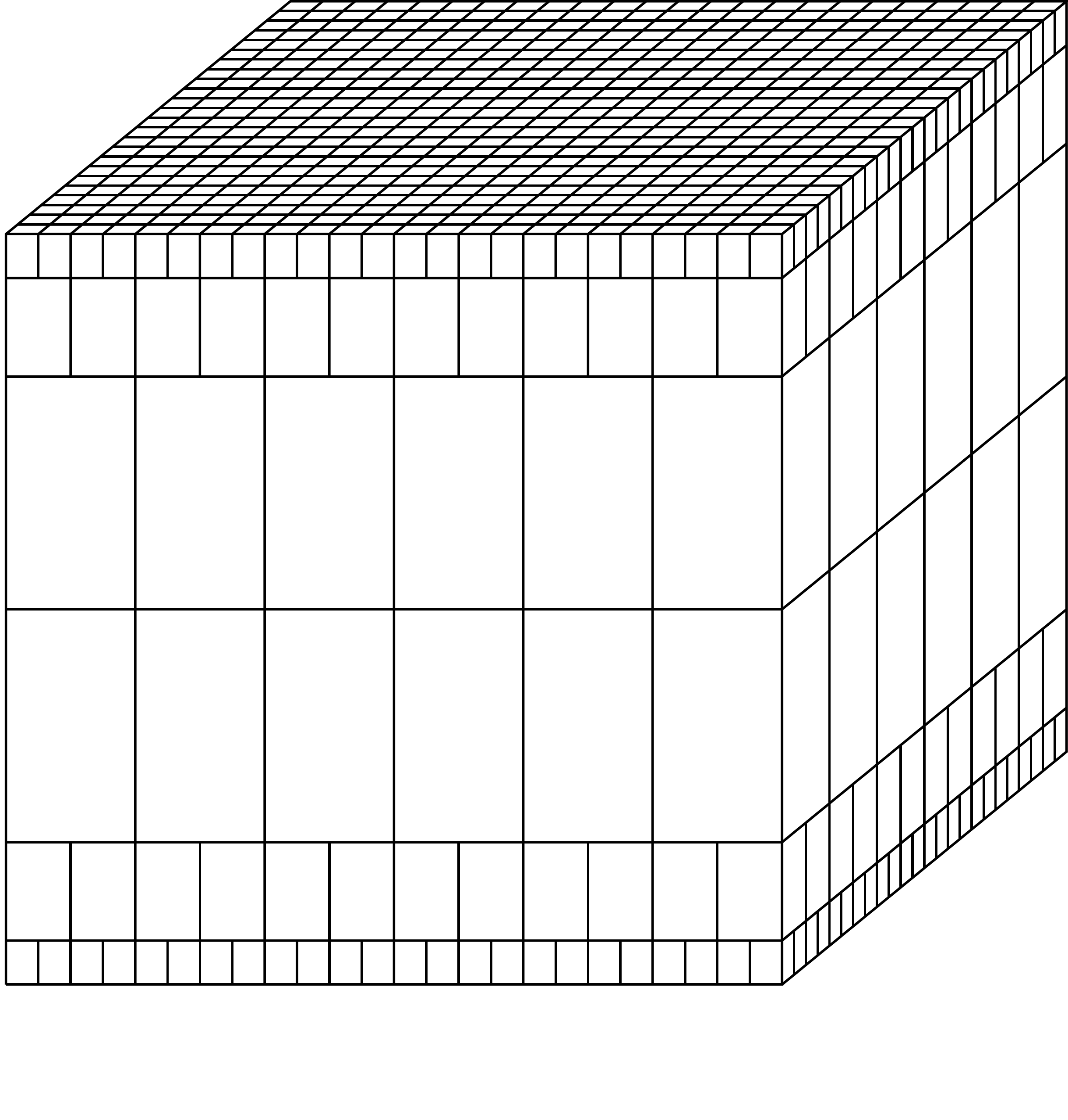}
%\begin{figure}
%\setlength\unitlength{.2\linewidth}
%\begin{picture}(2,1)
%\put(0,0){\includegraphics[width=2\unitlength]{setting}}
%\put(1.0,1){$F$}
%\put(.7,.05){$F$}
%\put(.6,-.1){$\ell$}
%\put(1.75,.2){$\ell$}
%\put(1.55,.75){$\Omega$}
%\put(-.1,.4){$L$}
%\end{picture}
%\hfill
%\includegraphics[width=1.7\unitlength]{elementaryCellLayering}
\caption{Left: Sketch of the design domain $\Omega$ and the applied load $F$ to be supported by the structure $\O$.
Right: Sketch of basic construction scheme consisting of layers (here three layers above and below the midplane) of elementary cells, whose width halves from layer to layer.
%\todo[inline]{left: adapt aspect ratios to be same as in construction sketches, make cube transparent (so that one sees some arrows a little darker).
%right: show three full layers (their sizes roughly as in the construction sketches, in particular with decreasing aspect ratio towards the boundary) on the surface of the cube left. Also properly align both sketches}
%\todo[inline]{include tikz-standalone codes in repository; the scale of the image should be fixed in the preamble of the tikz-picture so they are here included with scale=1}
}
\label{fig:setting}
\end{figure}

As mentioned previously, we are interested in the case of small perimeter regularization $\varepsilon\ll1$.
Furthermore we will assume $F\leq\sqrt{{4\mu\beta}/{\alpha}}$ since for larger $F$ the optimal structure is known to be $\O=\Omega$
(with or without perimeter regularization), see \cref{rem:largeForce} later.
We additionally require the domain to be sufficiently wide.
In that regime we prove the following energy scaling law.

\begin{theorem}[Energy scaling law for compliance minimization under a uniaxial load]\label{thm:scalingLawDimensional}
Assume $\bar F\vcentcolon=F\sqrt{\frac{\alpha}{4\mu\beta}}\leq1$ and $\varepsilon<\frac{\beta L}4$ as well as $\ell^3\geq\min\{L^3,\varepsilon L^2/\min\{\sqrt F,(1-F)^{3/2}|\log(1-F)|\}\}$.
There exist constants $c,C>0$ independent of $\alpha,\beta,\varepsilon,\mu,F,\ell,L$ such that
  \begin{equation*}
    %c \ell^2f(\alpha,\beta,\varepsilon,\mu,F,L)\leq\min_{\O \, \subset \, \Omega} \, \J^{\alpha,\beta,\varepsilon,\mu,F,\ell,L}(\O)-\J^{\alpha,\beta,*,\mu,F,\ell,L}_0\leq C\ell^2  f(\alpha,\beta,\varepsilon,\mu,F,L)
    c \ell^2f(\beta,\varepsilon,\bar F,L)\leq\min_{\O \, \subset \, \Omega} \, \J^{\alpha,\beta,\varepsilon,\mu,F,\ell,L}(\O)-\J^{\alpha,\beta,*,\mu,F,\ell,L}_0\leq C\ell^2  f(\beta,\varepsilon,\bar F,L)
  \end{equation*}
holds for
  \begin{equation*}
     f(\beta,\varepsilon,\bar F,L)
     =\begin{cases}
     \hfil\varepsilon & \text{if } \bar F\leq(\frac{\varepsilon}{\beta L})^{\frac{1}{2}} \\
     \hfil\beta^{\frac{1}{3}}\bar F^{\frac{2}{3}}\varepsilon^{\frac{2}{3}}L^{\frac{1}{3}} & \text{if } (\frac{\varepsilon}{\beta L})^{\frac{1}{2}}<\bar F\leq\frac{1}{2} \\
     %\hfil\beta^{\frac{1}{3}}(1\!-\!\bar F)^{\frac{2}{3}}\varepsilon^{\frac{2}{3}}L^{\frac{1}{3}} & \text{if } \frac{1}{2}<\bar F\ll1 \\
     \beta^{\frac{1}{3}}(1\!-\!\bar F)|\log(1\!-\!\bar F)|^{\frac{1}{3}}\varepsilon^{\frac{2}{3}}L^{\frac{1}{3}} & \text{if } \frac12<\bar F\text{ and } (\frac{\varepsilon}{\beta L})^{\frac{2}{3}}\leq \, (1\!-\!\bar F) \, |\log(1\!-\!\bar F)|^{-\frac{1}{3}} \\
     \hfil\beta(1\!-\!\bar F)^2L & \text{if } (1\!-\!\bar F) \, |\log(1\!-\!\bar F)|^{-\frac{1}{3}}<(\frac{\varepsilon}{\beta L})^{\frac{2}{3}}
     \end{cases}
  \end{equation*}
with $\J^{\alpha,\beta,*,\mu,F,\ell,L}_0=\inf_{\O\subset\Omega}\J^{\alpha,\beta,0,\mu,F,\ell,L}=L\ell^2F\sqrt{{\alpha\beta}/\mu}$.
If $\J^{\alpha,\beta,\varepsilon,\mu,F,\ell,L}$ is replaced with $\tilde\J^{\alpha,\beta,\varepsilon,\mu,F,\ell,L}$, then $f$ is replaced with
\begin{equation*}
\tilde f(\beta,\varepsilon,\bar F,L)
=\max\{\varepsilon,\sqrt{\bar F}\varepsilon L/\ell,f(\beta,\varepsilon,\bar F,L)\}.
%
% \beta L^3\frac\varepsilon{\beta L}\frac\ell L\sqrt{F}(\frac\alpha{4\mu\beta})^{1/4}
% =\sqrt{\bar F}\varepsilon\ell L
\end{equation*}
\end{theorem}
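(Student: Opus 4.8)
The plan is to establish the scaling law by separately proving the upper and lower bounds, reducing both to the scalar flux (superconductor) problem as much as possible and then handling the genuinely elastic complications. The overall architecture is a nondimensionalization followed by matched bounds. First I would rescale lengths and the stress field to remove $\mu$ and absorb $\alpha,\beta$ into a single effective load $\bar F$, so that the cost becomes (up to the explicit prefactor $L\ell^2 F\sqrt{\alpha\beta/\mu}$ and the multiplicative $\ell^2$) a function of $\beta,\varepsilon,\bar F,L$ only; this is exactly the content of the formula for $\J_0^{\alpha,\beta,*,\mu,F,\ell,L}$ and explains why the excess energy factors as $\ell^2 f$. After rescaling, the admissible structures $\O$ carry a stress field $\sigma$ with $\div\sigma=0$, $\sigma n=\hat\sigma n$ on $\partial\Omega$, $\sigma=0$ outside $\O$, and $\sigma$ symmetric; the compliance is $\frac1{2\mu}\int_\O|\sigma|^2$ minimized over such $\sigma$, plus the volume and perimeter terms.

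For the \textbf{lower bound} I would localize to the ``transmission'' component of the stress, namely the third row $\sigma e_3$, which by $\div\sigma=0$ and the boundary conditions behaves like a divergence-free vector field $B$ with prescribed flux $F$ through horizontal slices; the material region $\O$ is its support. Discarding all other stress components and the torque constraint, one is left with exactly the variational problem for the intermediate state of a type-I superconductor as treated in \cite{ChKoOt99,ChKoOt04}: minimize $\int |B|^2/(2\mu)$ over the support plus perimeter of the support plus a volume term, with prescribed vertical flux. I would cite their interpolation/branching lower bounds essentially verbatim — the three nontrivial branches of $f$ (the $\bar F^{2/3}\varepsilon^{2/3}$ regime, the $(1-\bar F)|\log(1-\bar F)|^{1/3}$ regime near full volume, and the trivial $\O=\Omega$ regime with cost $\beta(1-\bar F)^2L$) are precisely the regimes identified there, and the first branch $f=\varepsilon$ is the trivial lower bound from needing at least one interface. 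Care is needed only in checking that dropping the off-diagonal stress and the symmetry constraint can only decrease the energy, which is immediate, and in matching the volume/perimeter bookkeeping.

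For the \textbf{upper bound} I would build test structures from the branching/laminate constructions of \cite{ChKoOt04,ChCoKo08} (cf.\ \cref{fig:setting} right: dyadically refining layers of elementary cells), but these carry only a vertical flux and hence a stress $\sigma=\sigma_{33}e_3\otimes e_3$ that fails torque balance wherever the flux tube is not centered in its cell. The key new step — the main obstacle — is to repair torque balance in three dimensions: in each elementary cell I would add a small divergence-free, self-equilibrated ``shear'' stress correction $\sigmashear$ (supported in the same material region, or in thin additional struts of controlled volume and perimeter) whose moment cancels that of the transmitted vertical flux, while verifying that (i) its $L^2$ norm, added volume, and added perimeter are lower-order compared to the superconductor cost in each regime, and (ii) the corrections in adjacent cells and across the branching interfaces are mutually compatible (the offsets telescope correctly from layer to layer). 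In two dimensions \cite{KoWi14} this is a single scalar shear; in three dimensions one must balance a two-component moment and route it consistently through the branching tree, which is where the construction becomes delicate and the width hypothesis $\ell^3\geq\min\{L^3,\dots\}$ enters — it guarantees enough horizontal room to fit the branching across the base without the correction struts overlapping. Finally, for the variant cost $\tilde\J$ with full perimeter, the same constructions apply but one additionally pays $\per_{\R^3}(\O)\cap\partial\Omega$; bounding the trace of the construction on the top/bottom faces gives the extra term $\sqrt{\bar F}\varepsilon L/\ell$, and taking the maximum with $\varepsilon$ (one interface) and $f$ yields $\tilde f$. I expect the torque-balancing construction and its multi-scale compatibility to occupy the bulk of the proof, with everything else being adaptation and bookkeeping.
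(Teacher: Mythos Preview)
Your overall architecture matches the paper: nondimensionalize, minorize by the superconductor energy for the lower bound, and build branching constructions with torque-balancing corrections for the upper bound. The upper bound sketch is essentially right in spirit (the paper indeed adds cross-trusses for small force and an explicit $\sigmashear$ field for large force), though the actual constructions are more explicit than a perturbative ``correction'' suggests.

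There is, however, a genuine gap in your lower bound. The superconductor bounds do \emph{not} transfer verbatim in the small-force regime. In the superconductor problem the boundary condition is soft (only a penalty $\int_{\Omega^c}|B-b_ae_3|^2$), and the resulting scaling for very small applied field is $b_a\varepsilon^{4/7}L^{3/7}$, valid for $b_a\leq(\varepsilon/L)^{2/7}$. In the elasticity problem the stress satisfies a \emph{hard} boundary condition $\sigma n=\hat\sigma n$ on $\partial\Omega$, and this is what forces both the $\varepsilon$ scaling for $\bar F\leq(\varepsilon/\beta L)^{1/2}$ and the extension of the $\bar F^{2/3}\varepsilon^{2/3}$ bound all the way down to $\bar F\sim(\varepsilon/\beta L)^{1/2}$ (rather than only $\bar F\gtrsim(\varepsilon/\beta L)^{2/7}$). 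Concretely, for $\varepsilon^{1/2}<F\leq\varepsilon^{2/7}$ the superconductor minorization only gives $F\varepsilon^{4/7}$, which is strictly weaker than the claimed $F^{2/3}\varepsilon^{2/3}$ in that range. The paper closes this gap by rerunning the Choksi--Conti--Kohn--Otto argument but using that the stress is exactly prescribed on the top/bottom boundary (so the Wasserstein-type transport estimate lands directly on $F^{2/3}\varepsilon^{2/3}$ without a boundary-layer loss), and by a separate short argument that the hard boundary condition plus a thin cross-section forces perimeter $\gtrsim\ell^2$ in the extremely-small-force regime. Your sentence ``the first branch $f=\varepsilon$ is the trivial lower bound from needing at least one interface'' is the right intuition, but you need to say explicitly where the interface comes from: it is the hard constraint $\partial\O\supset\Gamma_{\mathrm t}\cup\Gamma_{\mathrm b}$.

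A smaller point: the extra term $\sqrt{\bar F}\varepsilon L/\ell$ in $\tilde f$ does not come from the trace on the top and bottom faces (that gives the $\varepsilon$ term). It is a \emph{lower} bound on the lateral perimeter: on a generic cross-section $\O_t$ has area $\sim F\ell^2$, and the isoperimetric inequality gives $\per_{\R^2}(\O_t)\gtrsim\sqrt{F}\ell$; integrating in height yields $\varepsilon\sqrt{F}\ell L$, i.e.\ $\ell^2\cdot\sqrt{F}\varepsilon L/\ell$. The matching upper bound is automatic from the constructions.
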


Above, $\J^{\alpha,\beta,*,\mu,F,\ell,L}_0$ represents the infimum cost that has to be paid even without perimeter regularization.
A positive $\varepsilon$ produces an additional excess cost beyond $\J^{\alpha,\beta,*,\mu,F,\ell,L}_0$, whose optimal scaling we characterize in \cref{thm:scalingLawDimensional}.
This scaling depends on the relation between $\varepsilon$ and $F$ or equivalently $\bar F$ -- the \namecref{thm:scalingLawDimensional} shows four different regimes.
Almost the same regimes occur in the pattern analysis for type-I superconductors \cite{ChKoOt04,ChCoKo08}.
In each regime a different structure $\O$ leads to the optimal scaling,
and the main contribution of this work is the construction of these structures, thereby proving the upper bound in \cref{thm:scalingLawDimensional}
(the lower bound will essentially follow from \cite{ChCoKo08} except for the first regime and for energy $\tilde\J^{\alpha,\beta,\varepsilon,\mu,F,\ell,L}$).

Below we briefly summarize these regimes in order of increasing $\bar F$.
As explained before, their optimal patterns will resemble the constructions from \cite{ChKoOt04}.
The main part of these patterns will always be composed of elementary cells that all look the same and only differ in size and height-width ratio.
These elementary cells are organized side by side in single horizontal layers of identical cells,
where the layers at the centre contain the largest elementary cells and where from each layer to the next the cell width is halved
so that towards the top and bottom boundary $\Gamma_{\mathrm{t}}$ and $\Gamma_{\mathrm{b}}$ the cells become finer and finer (see \cref{fig:setting} right).
The reason for this ansatz is that near the boundaries the structures have to be very evenly distributed and thus very fine in order to withstand the boundary load,
while away from the load it pays off to coarsen since this way one can reduce surface area.
The elementary cells will be designed in such a way that each cell connects seamlessly to four cells of half the width
in order to be able to stack the different cell layers on top of each other.

Throughout we will use the notation $f\lesssim g$ for two expressions $f$ and $g$ to indicate that there is a universal constant $C>0$ such that $f\leq Cg$.
Likewise, $f\gtrsim g$ means $g\lesssim f$, and $f\sim g$ is short for $f\gtrsim g$ and $f\lesssim g$.

\paragraph{Extremely small force.}
In this regime the nondimensional force $\bar F$ is small compared to $\sqrt\varepsilon$,
\begin{equation*}
\bar F\lesssim(\tfrac{\varepsilon}{\beta L})^{\frac{1}{2}}.
\end{equation*}
(Note that in the definition of $f$ in \cref{thm:scalingLawDimensional} the regime of low force was actually defined with $\leq$ in place of $\lesssim$,
but of course there is a transition region between any two neighbouring regimes in which they have the same scaling and thus the constructions of both are valid.
Therefore the regime boundaries are in fact only specified up to a constant factor.)
In this regime the dominant energy contribution comes from the perimeter regularization near the top and bottom boundary $\Gamma_{\mathrm{t}}$ and $\Gamma_{\mathrm{b}}$:
Since $\Gamma_{\mathrm{t}}\cup\Gamma_{\mathrm{b}}$ must be contained in $\partial\O$,
but $\O$ may only have a tiny material consumption on any cross-section between the top and bottom boundary,
the shape $\O$ must exhibit a surface area of roughly $\hd^2(\Gamma_{\mathrm{t}}\cup\Gamma_{\mathrm{b}})$ (where $\hd^2$ denotes the two-dimensional Hausdorff measure)
for the transition from $\Gamma_{\mathrm{t}}$ and $\Gamma_{\mathrm{b}}$ to an almost empty cross-section nearby.
This causes the excess cost of order $\ell^2\varepsilon$.
The actual geometry of the force-transmitting structure in between $\Gamma_{\mathrm{t}}$ and $\Gamma_{\mathrm{b}}$ has substantially smaller cost and thus is not so important;
one can for instance construct it similarly to the regime of small forces, shown in \cref{fig:constructionOverview} left.
Furthermore, in this regime it does not matter whether $\per_\Omega$ or $\per_{\R^3}$ is used as perimeter regularization as the scaling is the same.

\paragraph{Small force.}
The regime of small forces can be identified with the range
\begin{equation*}
(\tfrac{\varepsilon}{\beta L})^{\frac{1}{2}}\lesssim\bar F\leq C
\end{equation*}
for an (arbitrary) positive constant $C<1$.
The corresponding construction with optimal scaling consists of a framework of struts as shown in \cref{fig:constructionOverview} left.
It is composed of elementary cells of different sizes, where each elementary cell contains eight struts, arranged along the edges of an imaginary pyramid.
Here the major cost contribution stems from the coarsest elementary cells near the midplane, for which it is important to find the right balance between compliance and perimeter.
The structure differs from the optimal pattern in type-I superconductors by additional cross trusses (corresponding to the base edges of the pyramid).
Without these all other struts would be bent towards each other, resulting in a huge compliance.
Such cross trusses were already required in the two-dimensional setting in \cite{KoWi14}.
The structure may be seen as approximating a laminate; just like for laminates, all struts have unit stress inside.
Here, too, there is no difference in the energy scaling whether $\per_\Omega$ or $\per_{\R^3}$ is used as perimeter regularization,
since the boundary contribution to the perimeter is negligible compared to the total excess cost.

  \begin{figure}
   	\includegraphics[scale=0.29]{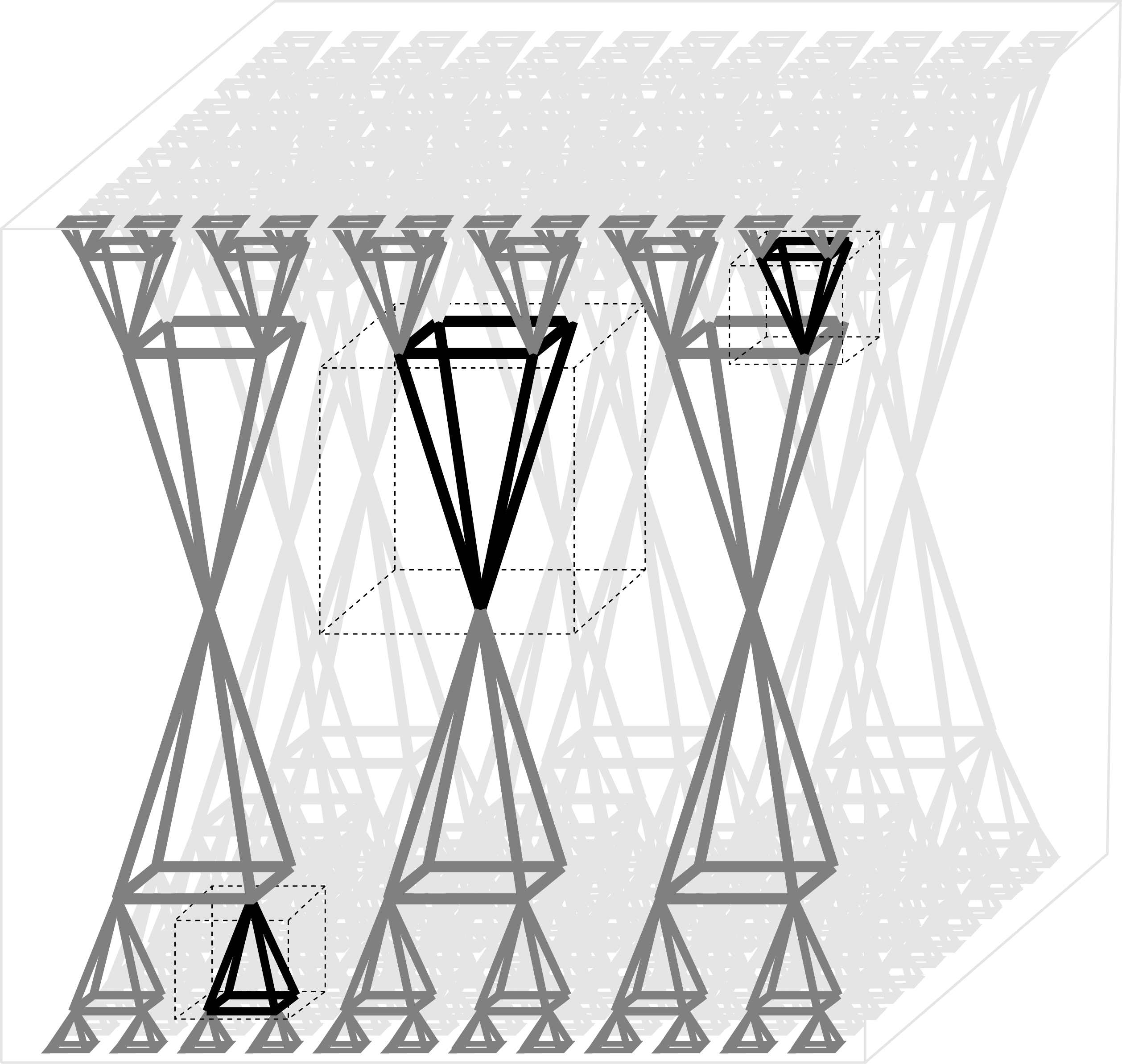} \hfill
   	\includegraphics[scale=0.305, angle=90]{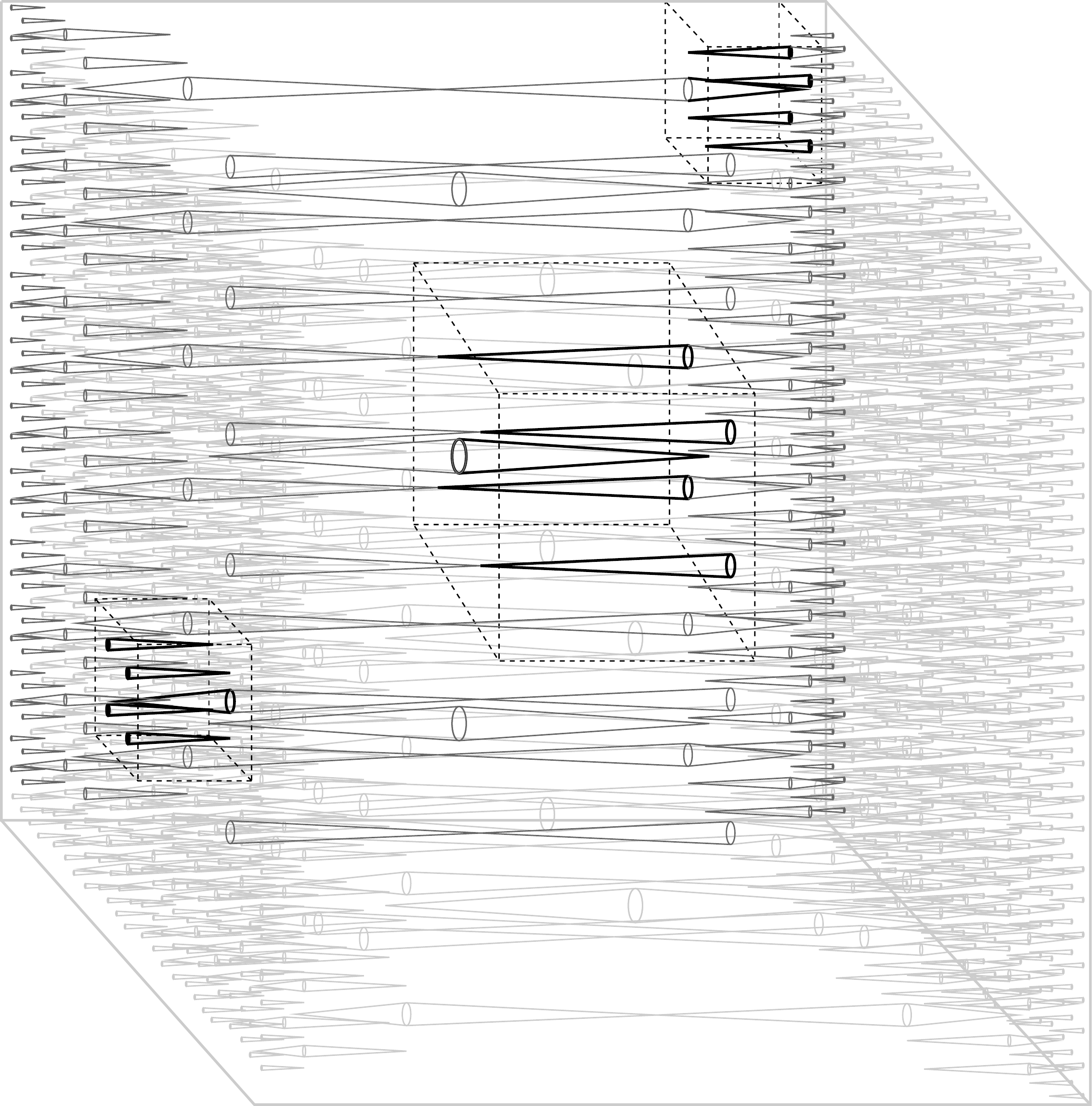}
	\caption[]{Illustration of constructions (with three layers above and below the midplane and three by three cells in the coarsest layer) for small and extremely small force (left) and large force (right). The latter one is a material block with many disjoint interior, cone-like voids. Three elementary cells are indicated with black colour.}
	\label{fig:constructionOverview}
  \end{figure}

\paragraph{Intermediate force.}
By intermediate forces we want to refer to the regime
\begin{equation*}
c\leq\bar F\leq C
\end{equation*}
for two (arbitrary) positive constants $0<c<C<1$.
This regime did not occur explicitly in \cref{thm:scalingLawDimensional}
but instead forms part of the regimes of small and of large forces, respectively.
Indeed, if $\bar F$ is bounded away from $0$ and from $1$, the scaling in both these regimes coincides.
Nevertheless we here mention the range of intermediate forces separately
since in this range the optimal scaling is in fact also obtained by a third construction different from the one for small or large forces:
One can consider the optimal pattern for compliance minimization in two space dimensions and constantly extend it along the third dimension, see \cref{fig:intermediateForces}.
This construction exhibits the same energy scaling independent of whether $\per_\Omega$ or $\per_{\R^3}$ is used.

  \begin{figure}
 	\includegraphics[scale=0.31]{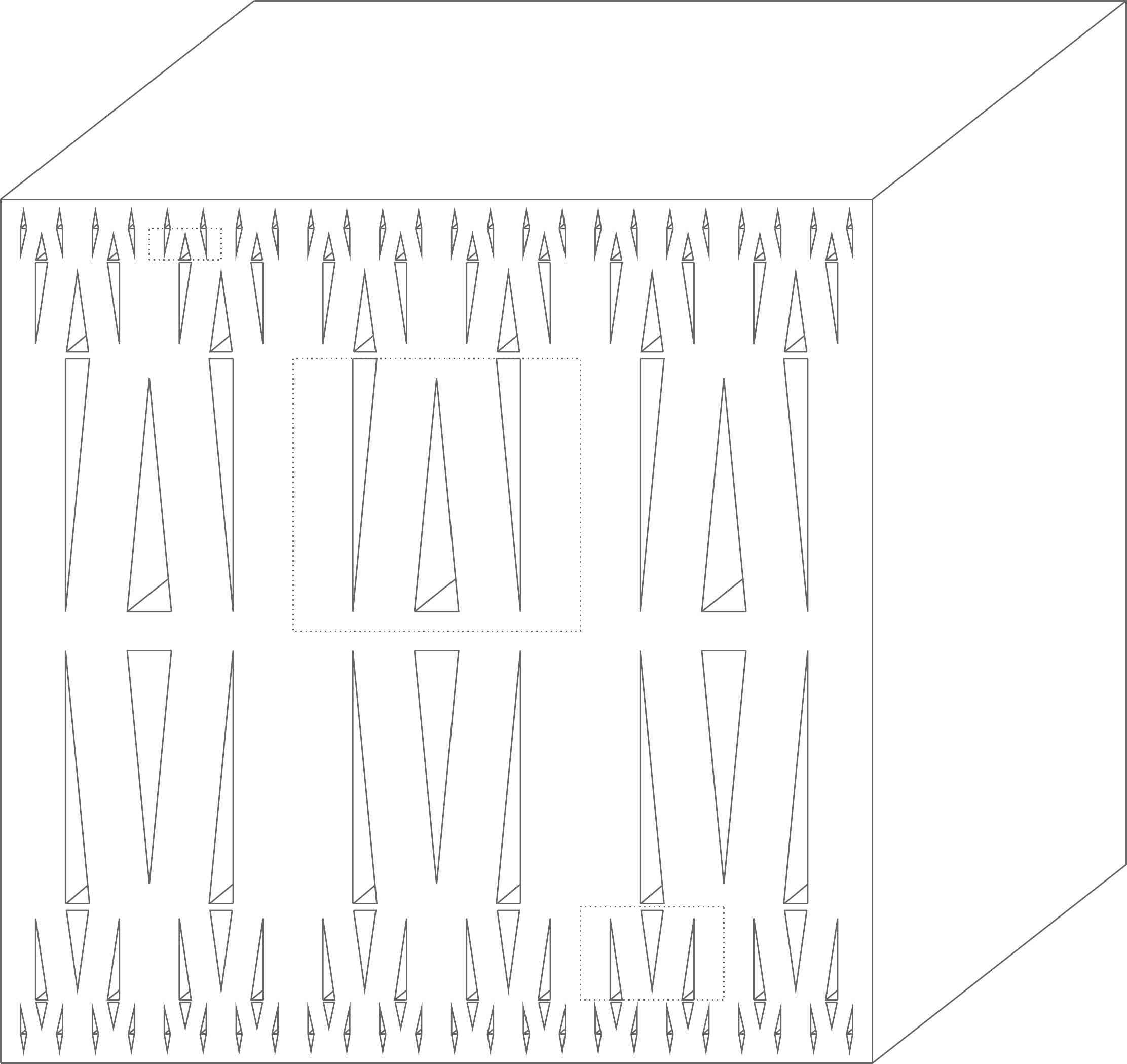}  \hfill
 	\includegraphics[scale=0.31]{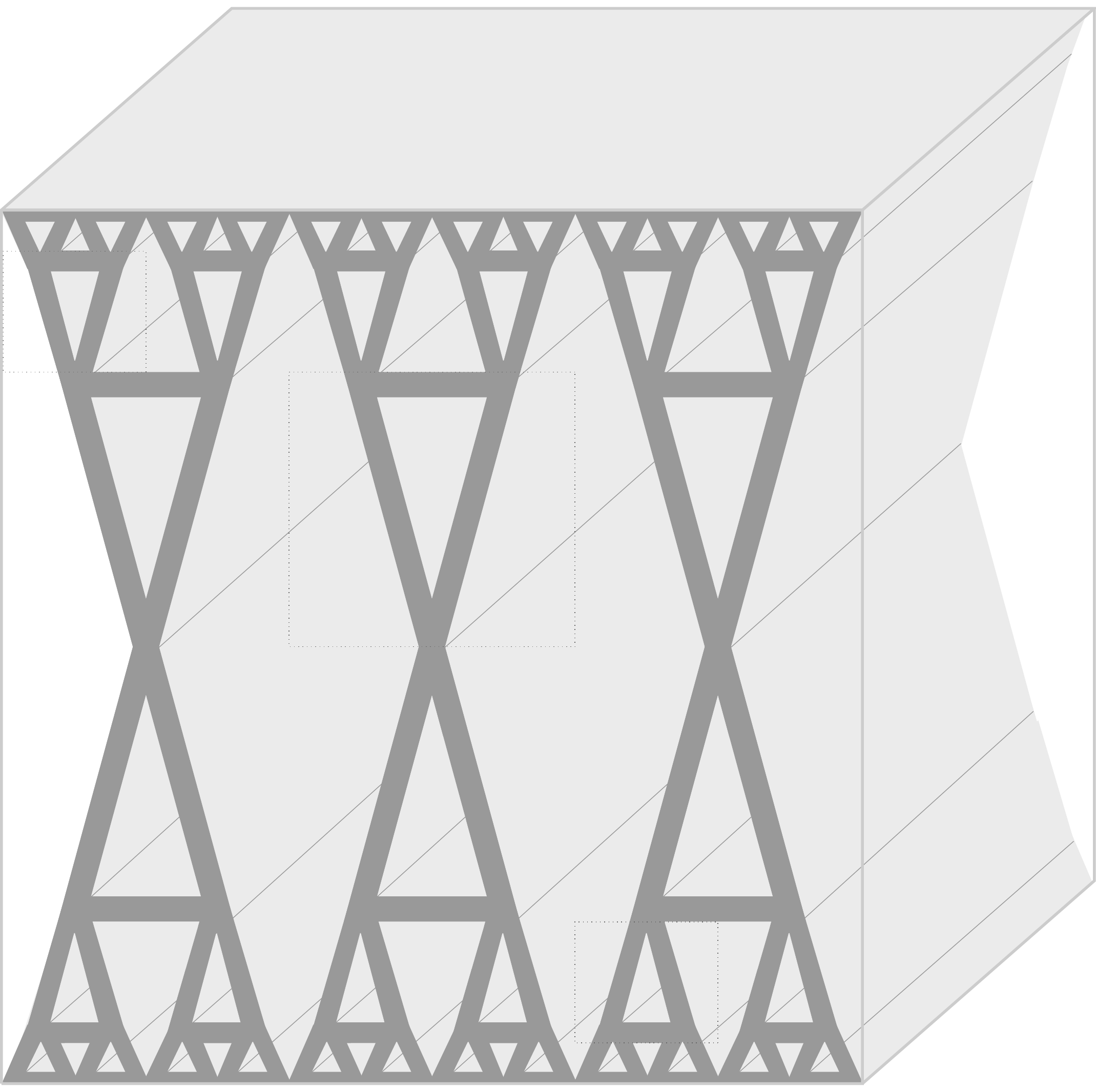} 
 	\caption{Two constructions with optimal scaling for intermediate forces, both an extension of optimal two-dimensional patterns (and thus composed of two-dimensional elementary cells). In each picture, three of these cells are indicated by dotted lines.
 	%\todo[inline]{Both: Indicate three 2D elementary cells by dotted rectangles. Right: Large box light grey as in previous figure; finest elementary cells should have a hole between the struts; all elementary cells in the bottom half have the inner backwards pointing lines in the wrong position.}
 	}
 	\label{fig:intermediateForces}
  \end{figure} 

\paragraph{Large force.}
This regime contains all forces with
\begin{equation*}
\bar F\geq C
\quad\text{and}\quad
(\tfrac{\varepsilon}{\beta L})^{\frac{2}{3}}\leq \, (1\!-\!\bar F) \, |\log(1\!-\!\bar F)|^{-\frac{1}{3}}
\end{equation*}
for some (arbitrary) positive constant $C<1$.
Those forces are relatively close to the maximum $\bar F=1$ from which on $\O=\Omega$ is known to be optimal,
but they still stay some distance (expressed in terms of $\varepsilon$) away from it.
The corresponding construction is again composed of elementary cells,
where each cell is a solid cuboid perforated by five thin cone-like voids, see \cref{fig:constructionOverview} right.
In this regime, for the first time, it can make a difference whether the perimeter is regularized with $\per_\Omega$ or $\per_{\R^3}$
since the maximum in the definition of $\tilde f$ in \cref{thm:scalingLawDimensional} may arise from any of the terms, depending on the size of $\bar F$ and $\ell$.
Indeed, if $\bar F$ gets too close to $1$, the perimeter contribution from $\partial\Omega$ dominates and masks the cost associated with the fine structure in the domain interior.
In the domain interior, however, again the coarsest elementary cells near the midplane contribute the major part of the cost.

\paragraph{Extremely large force.}
The regime of extremely large forces is characterized by
\begin{equation*}
\bar F\geq1
\qquad\text{or}\qquad
\bar F\leq1
\quad\text{and}\quad
(1\!-\!\bar F) \, |\log(1\!-\!\bar F)|^{-\frac{1}{3}}\lesssim(\tfrac{\varepsilon}{\beta L})^{\frac{2}{3}}.
\end{equation*}
In this regime the force is large enough such that the optimal geometry $\O$ is just a solid block of material $\O=\Omega$.
Any reduction in volume via material removal would be outweighed by the associated increase in compliance and perimeter.
If $\per_{\R^3}$ is used instead of $\per_\Omega$, the boundary cost dominates.

\begin{remark}[Two- and three-dimensional constructions]
Note that in the regime of intermediate forces both two-dimensional constructions (constantly extended along the third dimension) and three-dimensional constructions (the constructions from the regimes of small and of large forces) achieve the optimal energy scaling.
The same holds true in the regime of extremely small forces:
The constructions from \cref{fig:constructionOverview} left and \cref{fig:intermediateForces} right both achieve the scaling $\varepsilon$
(though the three-dimensional construction is expected to have a better constant):
The perimeter contribution from the top and bottom boundary simply dominates the excess cost so much that the construction in between becomes unimportant.
In the regime of extremely large forces even a zero-dimensional construction, constantly extended in all three dimensions, achieves the optimal energy scaling: the full material block.
However, in the regimes of small and large forces, respectively, only three-dimensional constructions can achieve the optimal energy scaling:
As shown in \cite{KoWi14} for small forces, the excess cost of any two-dimensional construction can at most scale like the third root in $\bar F$,
which is worse than the power $\frac23$ obtained here via a three-dimensional construction.
Likewise, via the same proof technique as in \cite{KoWi14} or via an optimal transport-based argument as in \cref{sec:lowerBoundSmallForce}
one can show that the excess cost of any two-dimensional construction can at most scale like the power $\frac23$ in $1-\bar F$
(we do not know whether this lower bound is sharp, though, cf.\ \cref{sec:intermediateForce}),
which is worse than the scaling $(1-\bar F)|\log(1-\bar F)|^{1/3}$ obtained by the three-dimensional construction.
\end{remark}

\begin{remark}[Nonzero Poisson ratio]\label{rem:Poisson}
Above we assumed a material with zero Poisson ratio.
While switching from zero to nonzero Poisson ratio changes the compliance and thus $\J^{\alpha,\beta,\varepsilon,\mu,F,\ell,L}(\O)$ by at most a constant factor,
this need not be true for the excess cost $\J^{\alpha,\beta,\varepsilon,\mu,F,\ell,L}(\O)-\J^{\alpha,\beta,*,\mu,F,\ell,L}_0$, which is the quantity we estimate.
However, all constructions are based on almost vertical strut-like structures bearing just uniaxial loads
(except maybe the construction from the large force regime),
whose excess cost is (in addition to the perimeter cost) mainly caused by their slight deviation from vertical alignment.
This contribution is actually independent of the Poisson ratio so that a nonzero Poisson ratio is not expected to change the energy scaling.
\end{remark}

\notinclude{
\subsection{Regimes, constructions and lower bound theorems}

A crucial property of our optimization problem consists in the observation that different geometric constructions turn out to be optimal depending on various regimes specified below. A global feature of these constructions is that they are built of elementary cells with recurring geometry, arranged symmetrically concerning a middle sectional plane and decreasing in size when approaching the outer boundaries, thereby saving perimeter. Usually, the connection of the elementary cells with these boundaries is guaranteed by a specific boundary layer. \\ Anticipating the nondimensionalization introduced in chapter 2.2 we set $\beta=1$ and perform $F^*\rightarrow F$ before stating the regimes (see Theorem 1 above for their general form). We keep $L$ general here to get the setting of the superconductivity problem. Our interpretation of the regimes is to identify them with the strength of the force $F$ which is possible since $\varepsilon$ is supposed to vary on a very small interval only. In the following we combine them with the associated lower bound theorems from the superconductivity problem, in the course of this inserting our notation and mentioning a few differences. \\ We directly start with stating some of them. One global difference is the domain used in the theorems: To benefit from periodic boundary conditions they consider the two-dimensional torus $\mathbb{T}^2$ on $Q=(0,1)^2$, in our situation it would read $Q=(0,\ell)^2$. However, this point does not cause any trouble and is easily overcome by writing an additional factor of $\ell^2$ in front of any lower bound at the end. A further difference is the opposite meaning of the characteristic function $\chi$: For the superconductors it labels the region where the considered material shows the superconducting property. In the elasticity problem this region is, broadly speaking, identified with the area where no material occurs. Since such a definition would be misleading we set

  \begin{equation*}
     \chi=\begin{dcases} 1 & \text{in} \ \O \\ 0 & \text{else} 
     \end{dcases}
  \end{equation*}
\\
from now on giving rise to the need of performing $\chi\rightarrow(1-\chi)$ in any formula of any lower bound proof from \cite{ChCoKo08}. For the energy expression $E(B,\chi$) (with $B$ denoting the magnetic induction) we write $E(\tilde{f},\chi)=\vcentcolon \Delta\J$ where $\tilde{f}$ is some flux (stemming from the scalar relaxed version of our problem, see chapter 2.4) and $\Delta\J$ the so called `excess energy' (see section 4). The theorems will be denoted as in \cite{ChCoKo08} to avoid confusion.

\subsubsection{Extremely small force}

The associated regime reads

  \begin{equation}
    F\lesssim\left(\frac{\varepsilon}{L}\right)^{\frac{1}{2}}\leq \, \frac{1}{2}
  \end{equation}
\\
and is directly connected with the following theorem. \\ \\
\textbf{Theorem 4.2*} \textit{There exists a constant (implicit in the notation below) such that if $F,\varepsilon,L$ satisfy (1.2) then for any $\chi\in BV((0,L)\times Q;{0,1})$, any $\tilde{f}$ \textit{such that} $\tilde{f}-\hat{f}\in L^2(\R\times Q;\R^3)$ with $\hat{f}=F \, e_z$, both $Q$-periodic and obeying the compatibility conditions} $\div \, \tilde{f}=0$ and $\tilde{f}(1-\chi)=0$ \textit{a.e., we have}

  \begin{equation*}
     \Delta\J \, \gtrsim \, \varepsilon \ \ \ .
  \end{equation*}
\\
This theorem is obviously different to the original version concerning the regime as well as the lower bound expression. It will be subject of chapter 3.2. The corresponding construction is very similar to that one of very small force (see below and Fig. 1.1).

\subsubsection{Very small force}

The associated regime reads 

   \begin{equation}
    \left(\frac{\varepsilon}{L}\right)^{\frac{1}{2}}\lesssim F\leq \, \frac{1}{2}
   \end{equation}
\\
and is directly connected with the following theorem. \\ \\
\textbf{Theorem 4.3*} \textit{There exists a constant (implicit in the notation below) such that if $F,\varepsilon,L$ satisfy (1.3) then for any $\chi\in BV((0,L)\times Q;{0,1})$, any $\tilde{f}$ \textit{such that} $\tilde{f}-\hat{f}\in L^2(\R\times Q;\R^3)$ with $\hat{f}=F \, e_z$, both $Q$-periodic and obeying the compatibility conditions} $\div \, \tilde{f}=0$ and $\tilde{f}(1-\chi)=0$ \textit{a.e., we have}

   \begin{equation*}
      \Delta\J \, \gtrsim \, F^{\frac{2}{3}}\varepsilon^{\frac{2}{3}}L^{\frac{1}{3}} \ \ \ .
   \end{equation*}
\\
As Theorem 4.2* it shows a difference to its original version concerning the regime, but the lower bound is the same now. This finding is subject of chapter 3.2 where it will be explained by stronger boundary conditions on $\Gamma$. \\ The corresponding construction is displayed in Fig. 1.1. Its elementary cells are, broadly speaking, built of material concentrated along the eight edges of a square pyramid.

\subsubsection{Small to middle force}

The associated regime 

  \begin{equation}
    \frac{1}{2}<F\ll1
  \end{equation}
\\
is not as sharp as the other ones which also holds for the corresponding theorem - in fact, its justification must arise from verifying that an explicit choice of $F,\varepsilon,L$ misses all other regimes, at least if the implicit constants are bounded within a specific interval. The theorem reads as follows. \\ \\
\textbf{Theorem 4.1} \textit{For any $\gamma>0$ there exists $C(\gamma)$ (a constant depending on $\gamma $) such that the following holds: For any $\varepsilon<L$, any $F\in(\gamma,1-\gamma)$, any $\chi\in BV((0,L)\times Q;{0,1})$, any $\tilde{f}$ \textit{such that} $\tilde{f}-\hat{f}\in L^2(\R\times Q;\R^3)$ with $\hat{f}=F \, e_z$, both $Q$-periodic and obeying the compatibility conditions} $\div \, \tilde{f}=0$ and $\tilde{f}(1-\chi)=0$ \textit{a.e., we have}

\begin{equation*}
\Delta\J \, \gtrsim \, C({\gamma}) \, \varepsilon^{\frac{2}{3}}L^{\frac{1}{3}} \ \ \ .
\end{equation*}
\\
Due to the claim $F\in(\gamma,1-\gamma)$ Theorem 4.1* is valid for the case $F<\frac{1}{2}$ too. Note that this is not a problem of consistency: One might interpret it as an extension of Theorem 4.3* such that, altogether, assuming $0<\gamma<\frac{1}{2}$ one has $C(\gamma)=F^{\frac{2}{3}}$ if $\gamma<F\leq\frac{1}{2}$ and $C(\gamma)=(1-F)^{\frac{2}{3}}$ if $\frac{1}{2}<F<1-\gamma$. This is exactly what we will get from the associated upper bounds. \\ The corresponding construction is made up of two-dimensional elementary cells consisting of material and triangular voids that are simply extruded in the third dimension. An exemplary front view of it (with $2\times3$ layers and three cells in the coarsest layer) is shown in Fig. 1.2.

\subsubsection{Large and very large force}

The associated regime reads 

  \begin{equation}
      \frac{1}{2}<\left(\frac{\varepsilon}{L}\right)^{\frac{2}{3}}\lesssim \, (1-F) \, |\log(1-F)|^{-\frac{1}{3}}
  \end{equation}
\\
for large force and

  \begin{equation}
     \frac{1}{2}<(1-F) \, |\log(1-F)|^{-\frac{1}{3}}\lesssim\left(\frac{\varepsilon}{L}\right)^{\frac{2}{3}}
  \end{equation}
\\
for very large force. We combine these regimes here to apply Theorem 5.1 from \cite{ChCoKo08} in its original form. \\ \\
\textbf{Theorem 5.1} \textit{For any $\chi\in BV((0,L)\times Q;{0,1})$, any $\tilde{f}$ \textit{such that} $\tilde{f}-\hat{f}\in L^2(\R\times Q;\R^3)$ with $\hat{f}=F \, e_z$ where $F\in\left(\frac{1}{2},1\right)$, both $Q$-periodic and obeying the compatibility conditions} $\div \, \tilde{f}=0$ and $\tilde{f}(1-\chi)=0$ \textit{a.e., we have}

  \begin{equation*}
      \Delta{\J} \, \gtrsim \, \min \, \bigg\{(1-F)\big|\log(1-F)\big|^{\frac{1}{3}}\varepsilon^{\frac{2}{3}}L^{\frac{1}{3}}, \, (1-F)^2L\bigg\} \ \ \ .
  \end{equation*}
\\
Note that the minimum is achieved by the first term within regime (1.5) and by the second one within regime (1.6). \\ The constructions associated with these regimes are slightly different: While that one for very large force is just given by a block of material lacking any geometric structure, the material of the elementary cell for large force is perforated by small cone-like voids (see Fig. 1.1).
}%\notinclude

\section{Model description and relation to superconductivity}

In this section we provide a detailed definition of the considered cost functional,
introduce related costs in models of type-I superconductors
and show that the latter provide lower bounds for our problem.

\subsection{Cost components}

To define compliance we first briefly recapitulate the basic framework of linearized elasticity.
Assume an elastic body $\O\subset\R^3$ with Lipschitz boundary is subjected to a boundary load $f:\partial\O\to\R^3$ (say in $L^2(\partial\O;\R^3)$).
The load represents a surface stress acting on the boundary.
As a result the body $\O$ deforms, and the equilibrium displacement $u:\O\to\R^3$ will be a minimizer of free energy
\begin{equation*}
\E(\tilde u)=\frac12\int_{\O}\mathbb{C}\epsilon(\tilde u):\epsilon(\tilde u)\,\d x-\int_{\partial\O}f\cdot\tilde u\,\d\hd^2,
\end{equation*}
where $\mathbb{C}$ is the fourth order elasticity tensor of the elastic material, $A:B=\tr(A^TB)$ denotes the Frobenius inner product between two matrices $A$ and $B$, and
\begin{equation*}
\epsilon(\tilde u)=\tfrac{1}{2}\left(\nabla\tilde u+\nabla\tilde u^T\right)
\end{equation*}
is the symmetrized gradient of the displacement (the skew-symmetric part corresponds to linearized rotations and thus does not contribute to the energy).
The first term of the free energy is the internally stored elastic energy, the second is the negative work performed by the load during the deformation. For the minimizer $u$ of the free energy, this work is also called compliance of $\O$ under the load $f$,
\begin{equation*}
\comp(\O)=\frac12\int_{\partial\O}f\cdot u\,\d\hd^2,
\end{equation*}
and it represents a quantitative measure of structural weakness.
In linearized elasticity there are multiple equivalent formulations of the compliance.
In particular, it is well-known (and essentially follows from convex duality, see for instance \cite{TeMi05}) that the compliance can also be expressed as
\begin{equation*}
\comp(\O)=\inf\left\{\int_\O\frac12\mathbb{C}^{-1}\sigma:\sigma\,\d x\,\middle|\,\sigma:\O\to\R^{3\times 3}_{\sym},\,\div\sigma=0\text{ in }\O,\,\sigma n=f\text{ on }\partial\O\right\}
\end{equation*}
for $\R^{3\times3}_{\sym}$ the real symmetric $3\times3$ matrices and $n$ the unit outward normal to $\partial\O$.
The minimizing $\sigma$ is the so-called stress tensor, describing the internal forces in equlibrium, and it is related to the equilibrium displacement $u$ via $\sigma=\mathbb{C}\epsilon(u)$.
As explained in \cref{sec:formulation}, we consider the simple elastic constitutive law of a homogeneous isotropic material with vanishing Poisson ratio and shear modulus $\mu$,
which means that the elasticity tensor is given by $\mathbb{C}\epsilon=2\mu\epsilon$.
Furthermore, in our case $\O\subset\Omega$ as well as $f=\hat\sigma n=(0\ 0\ F)^T$ on $\partial\Omega\supset\partial\O$ and $f=0$ else
so that we can extend any stress field $\sigma$ by zero to $\Omega\setminus\O$ and thus obtain a slightly simpler formulation of the compliance via
\begin{equation*}
\comp^{\mu,F,\ell,L}(\O)=\inf_{\sigma\in\Sigma_\ad^{F,\ell,L}(\O)}\int_\Omega\frac1{4\mu}|\sigma|^2\,\d x
\end{equation*}
for the admissible stress fields
\begin{equation*}
\Sigma_\ad^{F,\ell,L}(\O)=\{\sigma:\Omega\to\R^{3\times 3}_{\sym}\,|\,\div\sigma=0\text{ in }\Omega,\,\sigma n=\hat\sigma n\text{ on }\partial\Omega,\,\sigma=0\text{ on }\Omega\setminus\O\}.
\end{equation*}
This definition of the compliance can even be extended to arbitrary Borel sets $\O\subset\Omega$, so the condition of Lipschitz boundary can be dropped.

The second cost contribution, the material consumption or volume $\vol(\O)$ is nothing else than the Lebesgue measure of $\O$.

The third cost contribution, the perimeter, is defined as the total variation of the characteristic function $\chi_\O$ of $\O$,
\begin{multline*}
\per_A(\O)
=|\chi_\O|_{\TV(A)}
\vcentcolon=\sup\left\{\int_A\chi_\O\,\div\phi\,\d x\,\middle|\,\phi\in C_0^\infty(A;\R^3),\,|\phi|\leq1\text{ on }A\right\}\\
\text{for }A=\Omega\text{ or }A=\R^3.
\end{multline*}
Sets with $\per_{\R^3}(\O)<\infty$ are commonly called sets of finite perimeter,
and there is a measure-theoretic notion of boundary for them, the essential boundary $\partial^*\O$ (which coincides $\hd^2$-almost everywhere with $\partial\O$ for Lipschitz sets $\O$),
such that $\per_{\R^3}(\O)=\hd^2(\partial^*\O)$.

\notinclude{
To introduce the compliance we use a classical duality approach in linear elasticity which results in two different but equivalent expressions relying on displacements and stresses, respectively. The type of notation employed in this chapter is motivated by that one in \cite{TeMi05}. As before we consider a domain $\Omega$ composed of a region $\O$ of material and $\Omega\backslash\O$ of void. Whenever index notation appears in the following it is supposed to obey the Einstein summation convention. \\ In the framework of linearized elasticity, the linearized deformation tensor is given by

  \begin{equation}
     \varepsilon(u)=\frac{1}{2}\left(\nabla u+\nabla u^T\right)
  \end{equation}
\\
for an equilibrium displacement $u:\O\rightarrow\R^3$. Setting in our model Lamé's parameters $\lambda=0$ (due to zero Poisson's ratio) and $\mu=\frac{1}{4}$ (due to nondimensionalization; see chapter 2.2 for the reasoning of this) we define the elastic deformation energy of the geometric structure $\O$ as 

  \begin{equation*}
     W(\varepsilon)=\vcentcolon \int_{\O}\frac{1}{4} \, |\varepsilon(u)|^2 \, \d x=\int_{\O}w(\varepsilon) \, \d x=\int_{\O}\frac{1}{2}\bigg(\lambda \, \varepsilon_{kk}\varepsilon_{ll}+2\mu \, \varepsilon_{ij}\varepsilon_{ij}\bigg)\bigg|_{\mu=\frac{1}{4}, \, \lambda=0} \, \d x
  \end{equation*}
\\
where $w(\varepsilon)$ fulfils the stress-strain law

  \begin{equation*}
    \frac{\partial w(\varepsilon)}{\partial\varepsilon_{ij}}\vcentcolon=\sigma_{ij} \ \ \ \forall \, i,j 
  \end{equation*}
\\
which is equivalent to

  \begin{equation}
     \sigma_{ij}=\frac{1}{2} \, \varepsilon_{ij}
  \end{equation}
\\
and means that $\sigma$ is the stress induced by the displacement $u$. Now it is straightforward to write the elastic stress energy as

  \begin{equation*}
     W^*(\sigma)=\vcentcolon \int_{\O}w^*(\sigma) \, \d x=\int_{\O}\sigma_{ij} \, \sigma_{ij} \, \d x=\int_{\O}|\sigma|^2 \, \d x
  \end{equation*}
\\
so that one has

  \begin{equation*}
     W^*(\sigma)=W(\varepsilon)
  \end{equation*}
\\
if (2.2) holds. To include boundary conditions we observe by partial integration that for any admissible pair ($\tilde{u}$, $\tilde{\sigma})\in\sum^{\tilde{u},\tilde{\sigma}}_{\text{ad}}$, exploiting that $\tilde{\sigma}=0$ in $\Omega\backslash\O$ and $\div \, \tilde{\sigma}=0$ in $\O$, 

  \begin{equation}
    \int_{\O}\varepsilon(\tilde{u}):\tilde{\sigma} \, \d x=\int_{\Omega}\varepsilon(\tilde{u}):\tilde{\sigma} \, \d x=\int_{\partial\Omega}(\tilde{\sigma}n)\cdot\tilde{u} \, \text{d}a-\int_{\Omega}\div \, \tilde{\sigma}\cdot\tilde{u} \, \d x=\int_{\partial\Omega}(\tilde{\sigma}n)\cdot\tilde{u} \, \text{d}a
  \end{equation}
\\
holds where ':' denotes the Frobenius inner product. Next consider the expression

  \begin{equation*}
    \begin{aligned}
       W(\varepsilon(\tilde{u}))+W^*(\tilde{\sigma})-\int_{\O}\varepsilon(\tilde{u}):\tilde{\sigma} \, \d x
       &=\int_{\O}\left(\frac{1}{4} \, |\varepsilon(\tilde{u})|^2-\varepsilon(\tilde{u}):\tilde{\sigma}+|\tilde{\sigma}|^2\right) \, \d x \\ &=\int_{\O}\bigg|\frac{1}{2} \, \varepsilon(\tilde{u})-\tilde{\sigma}\bigg|^2 \, \d x\geq 0
    \end{aligned}
  \end{equation*}
\\
which, by Fenchel duality, implies

  \begin{equation}
     \underset{\tilde{u}\in\sum^{\tilde{u},\tilde{\sigma}}_{\text{ad}}}{\text{inf}}\left(W(\varepsilon(\tilde{u}))-\int_{\O}\varepsilon(\tilde{u}):\tilde{\sigma} \, \d x\right)=\underset{\tilde{\sigma}\in\sum^{\tilde{u},\tilde{\sigma}}_{\text{ad}}}{\text{sup}}\big(-W^*(\tilde{\sigma})\big)=-\underset{\tilde{\sigma}\in\sum^{\tilde{u},\tilde{\sigma}}_{\text{ad}}}{\text{inf}}\big(W^*(\tilde{\sigma})\big)
  \end{equation}
\\
since $w(\varepsilon(\tilde{u}))-\varepsilon(\tilde{u}):\tilde{\sigma}$ and $w^*(\tilde{\sigma})$ are convex functions. By (2.4) we have equivalence between two minimization problems where the infimum is taken over all kinematically admissible displacements $\tilde{u}$ on the left-hand side and over all statically admissible stress fields $\tilde{\sigma}$ on the right-hand side. We further identify the left expression 

  \begin{equation}
     W(\varepsilon(\tilde{u}))-\int_{\O}\varepsilon(\tilde{u}):\tilde{\sigma}=\int_{\O}\bigg(\frac{1}{4} \, |\varepsilon(\tilde{u})|^2-\varepsilon(\tilde{u}):\tilde{\sigma}\bigg) \, \d x
  \end{equation}
\\
with the 'free energy' and find
 
  \begin{equation*}
    \int_{\O}\bigg(\varepsilon(u):\tilde{\sigma}-\frac{1}{4} \, |\varepsilon(u)|^2\bigg) \, \d x=\underset{\tilde{\sigma}\in\sum^{\tilde{u},\tilde{\sigma}}_{\text{ad}}}{\text{inf}}\big(W^*(\tilde{\sigma})\big)=\underset{\tilde{\sigma}\in\sum^{\tilde{u},\tilde{\sigma}}_{\text{ad}}}{\text{inf}}\int_{\O}|\tilde{\sigma}|^2 \, \d x
  \end{equation*}
\\
for $\tilde{u}=u$ being its minimizer. If, additionally, $\tilde{\sigma}=\sigma$ is the solution of the right-hand side of (2.4), i.e.

  \begin{equation}
     \int_{\O}\bigg(\varepsilon(u):\sigma-\frac{1}{4} \, |\varepsilon(u)|^2\bigg) \, \d x=\int_{\O}|\sigma|^2 \, \d x
  \end{equation}
\\
holds, we are ready to state

  \begin{equation}
    \int_{\O}|\sigma|^2 \, \d x\vcentcolon=\comp(\O)
  \end{equation}
\\
as the definition for the compliance of the geometric structure $\O$ via stress fields. The equivalent expression via displacements is obtained by combining the left-hand side of (2.6) with (2.2) and (2.3) to yield 

  \begin{equation}
  	\frac{1}{2}\int_{\O}\varepsilon(u):\sigma \, \d x=\frac{1}{2}\int_{\partial\Omega}(\hat{\sigma}n)\cdot u \, \text{d}a\vcentcolon=\comp(\O)
  \end{equation}
\\
where $\hat{\sigma}=F \, e_z\otimes \, e_z$. \\ In total this chapter revealed that the compliance is equal to both the elastic deformation and the elastic stress energy. Another important, in a way `thermodynamic' interpretation is its equivalence with the negative minimal value of the free energy.

}%\notinclude

\subsection{Nondimensionalization}

In this paragraph we briefly nondimensionalize our cost $\J^{\alpha,\beta,\varepsilon,\mu,F,\ell,L}$ in order to reduce the number of parameters.
\begin{lemma}[Cost nondimensionalization]\label{thm:nondimensionalization}
The cost $\J^{\alpha,\beta,\varepsilon,\mu,F,\ell,L}$ from \eqref{eqn:cost} satisfies
  \begin{equation*}
    \J^{\alpha,\beta,\varepsilon,\mu,F,\ell,L}(L\O)=\beta L^3 \, \J^{1,1,\frac{\varepsilon}{\beta L},\frac{1}{4},\sqrt{\frac{\alpha}{4\mu\beta}}F,\frac{\ell}{L},1}(\O)
  \end{equation*}
for any $\O\subset(0,\frac{\ell}L)^2\times(0,1)$.
\end{lemma}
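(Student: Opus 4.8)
The plan is to verify the claimed identity term by term, matching each of the three cost contributions (compliance, volume, perimeter) under the rescaling $\O \mapsto L\O$. Writing $\O' = L\O \subset (0,\ell)^2 \times (0,L)$ and introducing the substitution $x = Ly$ with $y$ ranging over $\O$, one sees that $\vol(\O') = L^3 \vol(\O)$ and $\per_\Omega(\O') = L^2 \per_{(0,\ell/L)^2\times(0,1)}(\O)$, since the total variation of a characteristic function scales like a codimension-one measure. The only nontrivial ingredient is the compliance, for which I would use the stress-based formulation $\comp^{\mu,F,\ell,L}(\O') = \inf_{\sigma \in \Sigma_\ad^{F,\ell,L}(\O')} \int_\Omega \frac{1}{4\mu}|\sigma|^2\,\d x$ recalled in the previous subsection.

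First I would set up the change of variables for admissible stress fields. Given $\sigma \in \Sigma_\ad^{F,\ell,L}(\O')$, define $\sigma'(y) = \tfrac{1}{F}\sqrt{\tfrac{4\mu\beta}{\alpha}}\,\sigma(Ly)$, or more precisely rescale so that the boundary datum becomes $\hat\sigma' = \bar F e_3 \otimes e_3$ with $\bar F = \sqrt{\alpha/(4\mu\beta)}\,F$; one checks that $\div_y \sigma' = 0$ follows from $\div_x \sigma = 0$ (the chain rule produces a factor $L$ which does not affect the vanishing), that the rescaled normal-trace condition on $\partial\big((0,\ell/L)^2\times(0,1)\big)$ holds, and that $\sigma'$ vanishes off $\O$. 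This gives a bijection between $\Sigma_\ad^{F,\ell,L}(\O')$ and $\Sigma_\ad^{\bar F,\ell/L,1}(\O)$. Substituting into the energy, $\int_\Omega \frac{1}{4\mu}|\sigma(x)|^2\,\d x = L^3 \cdot \frac{F^2\alpha}{4\mu\cdot 4\mu\beta/\alpha \cdot \ldots}$—the precise bookkeeping shows the integral equals $\beta L^3 \cdot \bar F^2/(4\cdot\tfrac14) \cdot \int_\Omega \frac{1}{4\cdot\frac14}|\sigma'|^2\,\d y$ up to arranging the constants so that the shear modulus on the right is exactly $\tfrac14$. Taking the infimum over $\sigma$ then yields $\alpha\,\comp^{\mu,F,\ell,L}(\O') = \beta L^3 \cdot \comp^{1/4,\bar F,\ell/L,1}(\O)$ with coefficient $\alpha$ absorbed.

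Finally I would assemble the three pieces: $\alpha\,\comp^{\mu,F,\ell,L}(L\O) + \beta\,\vol(L\O) + \varepsilon\,\per_\Omega(L\O) = \beta L^3\big[\comp^{1/4,\bar F,\ell/L,1}(\O) + \vol(\O) + \tfrac{\varepsilon}{\beta L}\per_{(0,\ell/L)^2\times(0,1)}(\O)\big]$, which is exactly $\beta L^3\,\J^{1,1,\varepsilon/(\beta L),1/4,\bar F,\ell/L,1}(\O)$. The argument works verbatim when $\comp = \infty$ (the bijection of admissible sets preserves emptiness). I do not anticipate a serious obstacle here; the only place requiring care is tracking the combination of the rescaling of $\sigma$ and the Jacobian $L^3$ so that both the force and the shear modulus land on their normalized values $\bar F$ and $\tfrac14$ simultaneously — this is a one-line computation once the scaling exponent of $\sigma$ is chosen correctly, namely $\sigma' \propto \sigma$ with a purely $(\alpha,\beta,\mu,F)$-dependent constant and no $L$-dependence, the $L$-scaling entering only through the domain and the volume element.
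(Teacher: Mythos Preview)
Your approach is essentially the same as the paper's: rescale volume and perimeter by the obvious homogeneity, and handle the compliance via a bijection of admissible stress fields under $x\mapsto Lx$. The only slip is the explicit constant you first write for $\sigma'$: the correct rescaling is $\sigma'(y)=\sqrt{\alpha/(4\mu\beta)}\,\sigma(Ly)$ (no $1/F$ factor, and the square root is inverted relative to what you wrote), which you effectively acknowledge when you say ``or more precisely rescale so that the boundary datum becomes $\bar F e_3\otimes e_3$''; with that constant the bookkeeping is a clean one-liner and your final identity $\alpha\,\comp^{\mu,F,\ell,L}(L\O)=\beta L^3\,\comp^{1/4,\bar F,\ell/L,1}(\O)$ is exactly what the paper obtains.
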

\begin{proof}
By the transformation rule for volume and surface integrals we have
  \begin{equation*}
     \vol(L\O)=L^3 \, \vol(\O) \quad \text{and} \quad \per_\Omega(L\O)=L^2 \, \per_\Omega(\O).
  \end{equation*}
As for the compliance, it is straightforward to check that $\sigma\in\Sigma_\ad^{F,\ell,L}(L\O)$ is equivalent to
\begin{equation*}
\bar\sigma
\in\Sigma_\ad^{\sqrt{\frac{\alpha}{4\mu\beta}}F,\frac{\ell}{L},1}(\O)
\qquad\text{for }
\bar\sigma(Lx)=\sqrt{\frac{\alpha}{4\mu\beta}}\sigma(x).
\end{equation*}
Thus we have
\begin{multline*}
\comp^{\mu,F,\ell,L}(L\O)
=\inf_{\sigma\in\Sigma_\ad^{F,\ell,L}(L\O)}\frac1{4\mu}\int_{\Omega^{\ell,L}}|\sigma|^2\,\d x
=\inf_{\bar\sigma\in\Sigma_\ad^{\sqrt{\frac{\alpha}{4\mu\beta}}F,\frac{\ell}{L},1}(\O)}\frac{\beta}{\alpha}\int_{\Omega^{\ell,L}}|\bar\sigma(Lx)|^2\,\d x\\
=\inf_{\bar\sigma\in\Sigma_\ad^{\sqrt{\frac{\alpha}{4\mu\beta}}F,\frac{\ell}{L},1}(\O)}\frac{\beta L^3}{\alpha}\int_{\Omega^{\frac\ell L,1}}|\bar\sigma(x)|^2\,\d x
=\frac{\beta L^3}{\alpha}\comp^{\frac14,\sqrt{\frac{\alpha}{4\mu\beta}}F,\frac{\ell}{L},1}(\O).
\end{multline*}
where we abbreviated $\Omega^{\ell,L}=(0,\ell)^2\times(0,L)$.
The combination of the above formulas yields the desired result.
\end{proof}

The analogous identity holds for $\tilde\J^{\alpha,\beta,\varepsilon,\mu,F,\ell,L}$ from \eqref{eqn:costWithBoundary}.
Consequently, without loss of generality we may set $\alpha=\beta=L=1$ and $\mu=\frac{1}{4}$ from now on;
the energy scaling for other parameter choices then follows from \cref{thm:nondimensionalization}.
We will therefore abbreviate $\J^{\varepsilon,F,\ell}=\J^{1,1,\varepsilon,\frac{1}{4},F,\ell,1}$ and $\tilde\J^{\varepsilon,F,\ell}=\tilde\J^{1,1,\varepsilon,\frac{1}{4},F,\ell,1}$ as well as $\comp^{F,\ell}=\comp^{\frac14,F,\ell,1}$.
Furthermore, instead of \cref{thm:scalingLawDimensional} we prove the following, from which \cref{thm:scalingLawDimensional} is a direct consequence.

\begin{theorem}[Nondimensional energy scaling law for compliance minimization under a uniaxial load]\label{thm:scaling}
Let $\Omega=(0,\ell)^2\times(0,1)$ and assume $F\leq1$ and $\varepsilon<\frac14$ as well as $\ell^3\geq\min\{1,\varepsilon/\min\{\sqrt F,(1-F)^{3/2}|\log(1-F)|\}\}$.
There exist constants $c,C>0$ such that
  \begin{equation*}
    c \ell^2f(\varepsilon,F)\leq\min_{\O \, \subset \, \Omega} \, \J^{\varepsilon,F,\ell}(\O)-\J^{*,F,\ell}_0\leq C\ell^2  f(\varepsilon,F)
  \end{equation*}
holds for
  \begin{equation*}
     f(\varepsilon,F)
     =\begin{cases}
     \hfil\varepsilon & \text{if } F\leq\varepsilon^{\frac{1}{2}} \\
     \hfil F^{\frac{2}{3}}\varepsilon^{\frac{2}{3}} & \text{if } \varepsilon^{\frac{1}{2}}<F\leq\frac{1}{2} \\
     %\hfil\beta^{\frac{1}{3}}(1\!-\!F)^{\frac{2}{3}}\varepsilon^{\frac{2}{3}}L^{\frac{1}{3}} & \text{if } \frac{1}{2}<F\ll1 \\
     (1\!-\!F)|\log(1\!-\!F)|^{\frac{1}{3}}\varepsilon^{\frac{2}{3}} & \text{if } \frac12<F\text{ and } \varepsilon^{\frac{2}{3}}\leq \, (1\!-\!F) \, |\log(1\!-\!F)|^{-\frac{1}{3}} \\
     \hfil(1\!-\!F)^2 & \text{if } (1\!-\!F) \, |\log(1\!-\!F)|^{-\frac{1}{3}}<\varepsilon^{\frac{2}{3}}
     \end{cases}
  \end{equation*}
with $\J^{*,F,\ell}_0=\inf_{\O\subset\Omega}\J^{0,F,\ell}=2\ell^2F$.
If $\J^{\varepsilon,F,\ell}$ is replaced with $\tilde\J^{\varepsilon,F,\ell}$, then $f$ is replaced with
\begin{equation*}
\tilde f(\varepsilon,F)
=\max\{\varepsilon,\sqrt{F}\varepsilon/\ell,f(\varepsilon,F)\}.
\end{equation*}
\end{theorem}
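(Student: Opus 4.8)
The plan is to deduce \cref{thm:scalingLawDimensional} from \cref{thm:nondimensionalization} and then to prove \cref{thm:scaling} by establishing, in each of the four regimes, a matching pair of a lower bound (essentially imported from the superconductor literature) and an upper bound (via an explicit construction). Throughout, $\O$ is fixed when arguing the lower bound and is to be constructed when arguing the upper bound.

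\emph{Lower bound.} The key step is a scalar relaxation. Given $\O$, let $\sigma\in\Sigma_\ad^{F,\ell,1}(\O)$ realize the compliance and set $g\vcentcolon=\sigma e_3=(\sigma_{13},\sigma_{23},\sigma_{33})^T$ (the third row of $\sigma$). Then $\div g=(\div\sigma)_3=0$ in $\Omega$, the normal trace satisfies $g\cdot n=(\sigma n)_3=F(e_3\cdot n)$ on $\partial\Omega$, we have $g=0$ on $\Omega\setminus\O$, and $|g|^2\leq|\sigma|^2$ pointwise. Hence
\[
\J^{\varepsilon,F,\ell}(\O)\;\geq\;\int_\Omega|g|^2\,\d x+\int_\Omega\chi_\O\,\d x+\varepsilon\,\per_\Omega(\O),
\]
and, after rescaling the base square $(0,\ell)^2$ to $(0,1)^2$ (producing the prefactor $\ell^2$), the right-hand side is exactly the relaxed Gibbs free energy of the type-I superconductor model of \cite{ChKoOt04,ChCoKo08} for the applied flux $\hat f=Fe_z$, with the roles of the two phases interchanged ($\chi\to1-\chi$). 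Moreover $\int_\Omega(|g|^2+\chi_\O)\geq\int_\Omega 2|g|\geq2\bigl|\int_\Omega g_3\bigr|=2F\ell^2$, since $\div g=0$ and the boundary conditions fix the vertical flux $\int_{(0,\ell)^2\times\{z\}}g_3=F\ell^2$ for every $z$; this identifies $\J_0^{*,F,\ell}=2F\ell^2$ and shows $\J^{\varepsilon,F,\ell}(\O)-\J_0^{*,F,\ell}$ dominates the superconductor excess energy $\Delta\J$. The bounds $\Delta\J\gtrsim\ell^2 f(\varepsilon,F)$ for $\varepsilon^{1/2}<F$ are then the estimates of \cite{ChKoOt99,ChCoKo08} (their Theorems~4.1, 4.3 and 5.1). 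In the regime $F\leq\varepsilon^{1/2}$ one argues directly: since $\Gamma_{\mathrm{t}}\cup\Gamma_{\mathrm{b}}\subseteq\partial\O$ carries the full load while near-minimality of $\comp+\vol$ forces $\O$ to be thin on cross-sections just inside these faces, a coarea/slicing argument gives either $\per_\Omega(\O)\gtrsim\ell^2$ or $\comp+\vol\geq2F\ell^2+c\ell^2\varepsilon$. For $\tilde\J^{\varepsilon,F,\ell}$ one additionally uses $\per_{\R^3}(\O)\geq\per_\Omega(\O)+\hd^2(\Gamma_{\mathrm{t}}\cup\Gamma_{\mathrm{b}})+\hd^2\bigl(\partial^*\O\cap(\partial\Omega\setminus(\Gamma_{\mathrm{t}}\cup\Gamma_{\mathrm{b}}))\bigr)$; the middle term yields the contribution $\varepsilon$ in $\tilde f$, and a slicing argument on the lateral faces (where a near-optimal structure leaves a trace resembling a fine laminate of area fraction $\sim F$, hence of perimeter $\gtrsim\sqrt F\,\ell$) yields the contribution $\sqrt F\,\varepsilon/\ell$.

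\emph{Upper bound.} In each regime we exhibit a set $\O$ together with an explicit $\sigma\in\Sigma_\ad^{F,\ell,1}(\O)$ and bound the three cost terms. For extremely large force we take $\O=\Omega$ and $\sigma=\hat\sigma$, giving $\comp=F^2\ell^2$, $\vol=\ell^2$, $\per_\Omega=0$, hence excess cost exactly $\ell^2(1-F)^2$; for $\tilde\J$ one adds $\varepsilon\,\per_{\R^3}(\Omega)=\varepsilon(2\ell^2+4\ell)$, which is $\lesssim\ell^2\tilde f(\varepsilon,F)$ since here $F$ is bounded below. In the three remaining regimes the competitor is assembled from elementary cells organized into horizontal layers symmetric about the midplane $\{z=\tfrac12\}$, the cell width halving from layer to layer toward $\Gamma_{\mathrm{t}},\Gamma_{\mathrm{b}}$, with a thin boundary layer gluing the finest cells to the full faces and with the coarsest width $w_0$ (equivalently the number of layers) chosen to balance perimeter (favouring coarse cells) against compliance and volume (favouring fine, evenly spread material); the domain-width hypothesis $\ell^3\geq\min\{1,\varepsilon/\min\{\sqrt F,(1-F)^{3/2}|\log(1-F)|\}\}$ guarantees these cells and layers fit inside $\Omega$. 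For small force $\varepsilon^{1/2}<F\leq\tfrac12$ (which, by $F^{2/3}\varepsilon^{2/3}\leq\varepsilon$, also covers the extremely small force regime) each cell carries eight struts along the edges of a square pyramid, so one node of width $\sim w$ at the bottom branches seamlessly into four nodes of width $\sim w/2$ at the top; the stress inside every strut is uniaxial of magnitude $\sim1$ (as in a laminate) with strut cross-section $\sim F w^2$ so that the cell transmits the flux $Fw^2$, and — this is the point where elasticity departs from superconductivity — cross-trusses along the base edges of the pyramid are added so that the nodal force balance closes with a genuinely divergence-free symmetric stress rather than one requiring bending moments. For large force $F\geq C$, $\varepsilon^{2/3}\leq(1-F)|\log(1-F)|^{-1/3}$, each cell is instead a solid cuboid perforated by five thin cone-like voids realizing the same $4{:}1$ branching of the void pattern; $\sigma$ is a perturbation of $\hat\sigma$ deflected around the voids, so $|\sigma|\sim1$ and $\comp+\vol\approx2F\ell^2$ up to a controlled excess, and summing the cone surface areas over all scales produces the factor $|\log(1-F)|$. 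In every case one checks $\div\sigma=0$, the correct normal traces on all six cell faces (so the cells glue to a global admissible field), and $\sigma=0$ off $\O$, and then reads off $\comp,\vol,\per_\Omega$ (and $\per_{\R^3}$ for $\tilde\J$), obtaining the claimed $C\ell^2 f(\varepsilon,F)$, resp.\ $C\ell^2\tilde f(\varepsilon,F)$, after the scale optimization.

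\emph{Main obstacle.} The lower bound is, modulo the scalar relaxation above and the two direct slicing arguments, essentially a citation of \cite{ChCoKo08,ChKoOt99}. The substance — and the main obstacle — lies in the upper bound: for each cell geometry one must write down an explicit symmetric, divergence-free stress field with prescribed normal traces on all six faces, supported in the thin prescribed set, and with $\int|\sigma|^2$ small; unlike the scalar superconductor flux this object must additionally be torque-balanced, which is exactly what forces the cross-trusses in the small-force construction and the particular cone geometry in the large-force one, and makes the three-dimensional constructions considerably harder than their two-dimensional counterparts in \cite{KoWi14}. The accompanying bookkeeping — choosing layer heights and widths, the number of layers, and the boundary layer near $\Gamma_{\mathrm{t}}\cup\Gamma_{\mathrm{b}}$ so that momentum and torque balance hold globally while the scale optimization reproduces exactly the exponents $\varepsilon$, $F^{2/3}\varepsilon^{2/3}$, $(1-F)|\log(1-F)|^{1/3}\varepsilon^{2/3}$, $(1-F)^2$ and the extra boundary terms of $\tilde f$ — is the remaining technical burden.
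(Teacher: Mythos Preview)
Your overall architecture matches the paper's: scalar relaxation to a divergence-free flux problem for the lower bound, explicit layered constructions for the upper bound. Two points, however, are genuine gaps rather than just places where detail is suppressed.

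\textbf{Small-force lower bound for $\varepsilon^{1/2}<F\leq\varepsilon^{2/7}$.} You write that after the scalar relaxation ``the bounds $\Delta\J\gtrsim\ell^2 f(\varepsilon,F)$ for $\varepsilon^{1/2}<F$ are then the estimates of \cite{ChKoOt99,ChCoKo08}.'' This is not quite right. Your relaxed problem has the \emph{hard} boundary condition $g\cdot n=F(e_3\cdot n)$ on $\partial\Omega$, whereas the superconductor functional only penalizes $\int_{\Omega^c}|B-b_ae_3|^2$. The inequality goes the right way (your relaxed energy dominates the superconductor energy, as in \cref{thm:minorization}), but the superconductor lower bound in \cite{ChCoKo08} for small applied field gives $b_a\varepsilon^{4/7}$ only when $b_a\leq\varepsilon^{2/7}$, which is strictly weaker than $F^{2/3}\varepsilon^{2/3}$ there. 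The paper closes this gap by \emph{rerunning} the proof of \cite[Thm.\,4.3]{ChCoKo08} directly on the strict-BC scalar problem (\cref{sec:lowerBoundSmallForce}); the last step simplifies precisely because $\rho_0\equiv F$ exactly rather than only approximately, and this is what pushes the validity of the $F^{2/3}\varepsilon^{2/3}$ bound down to $F>\varepsilon^{1/2}$. You should flag that an adaptation, not a citation, is needed here.

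\textbf{The $\sqrt F\,\varepsilon/\ell$ term in $\tilde f$.} Your argument (``a near-optimal structure leaves a trace resembling a fine laminate of area fraction $\sim F$ on the lateral faces'') is heuristic and, as stated, circular: it presupposes the structure of a minimizer. The paper's argument is different and robust: on almost every horizontal slice $\O_t$ one has $A_t\vcentcolon=\hd^2(\O_t)$ and, by the planar isoperimetric inequality, $\per_{\R^2}(\O_t)\geq2\sqrt{\pi A_t}$. Integrating in $t$ and combining with the slice-wise compliance bound $\int_{\O_t}|\sigma|^2\geq F^2\ell^4/A_t$ gives
\[
\tilde\J^{\varepsilon,F,\ell}(\O)-\J_0^{*,F,\ell}\;\geq\;\int_0^1\Bigl(\tfrac{F^2\ell^4}{A_t}+A_t-2F\ell^2+2\varepsilon\sqrt{\pi A_t}\Bigr)\,\d t,
\]
and an elementary minimization in $A_t$ (the paper's \cref{thm:polynomialEstimate}) yields $\gtrsim\min\{\sqrt F\varepsilon\ell,(F\varepsilon\ell^2)^{2/3}\}$; the domain-width hypothesis then rules out the second alternative. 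No assumption on the shape of $\O$ is needed.

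\textbf{A minor correction on the upper bound.} In the large-force construction, the factor $|\log(1-F)|$ does \emph{not} come from ``summing the cone surface areas over all scales''; the perimeter contribution is $\varepsilon al\sim\varepsilon\sqrt{1-F}\,wl$ and carries no logarithm. The logarithm arises in the compliance, from integrating $\bar\sigma_{rr}^2\sim(RR'/r)^2$ over the annulus $R<r<w/16$, which produces $\int_R^{w/16}r^{-1}\,\d r=\log(w/(16R))\sim|\log(1-F)|$. Your identification of the main obstacle (building a genuinely divergence-free \emph{symmetric} stress, forcing cross-trusses and the particular cone profile with $R'(0)=0$) is, however, exactly right.
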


\begin{remark}[Optimal configuration for large forces]\label{rem:largeForce}
Note that for $F>1$ one obtains $\min_{\O \subset \Omega} \J^{\varepsilon,F,\ell}(\O)=\ell^2(1+|F|^2)$, which is readily checked to be achieved by the choice $\O=\Omega$.
Indeed, define $g(0)=0$ and $g(\sigma)=1+|\sigma|^2$ for $\sigma\neq0$, and denote its convexification by $g^{**}(\sigma)=1+|\sigma|^2$ if $|\sigma|>1$ and $g^{**}(\sigma)=2|\sigma|$ else.
Then using Jensen's inequality one has
\begin{multline*}
\J^{\varepsilon,F,\ell}(\O)
\geq\inf_{\sigma\in\Sigma_\ad^{F,\ell,1}(\O)}\int_\Omega|\sigma|^2\,\d x+\vol(\O)
% \geq\inf_{\sigma\in\Sigma_\ad^{F,\ell,1}(\O)}\int_\Omega\left\{\begin{smallmatrix}1+|\sigma|^2&\text{if }\sigma\neq0\\0&\text{else}\end{smallmatrix}\right\}\,\d x
% \geq\inf_{\sigma\in\Sigma_\ad^{F,\ell,1}(\O)}\int_\Omega\left\{\begin{smallmatrix}1+|\sigma|^2&\text{if }|\sigma|\geq1\\2|\sigma|&\text{else}\end{smallmatrix}\right\}\,\d x
\geq\inf_{\sigma\in\Sigma_\ad^{F,\ell,1}(\O)}\int_\Omega g(\sigma)\,\d x
\geq\inf_{\sigma\in\Sigma_\ad^{F,\ell,1}(\O)}\int_\Omega g^{**}(\sigma)\,\d x\\
\geq\inf_{\sigma\in\Sigma_\ad^{F,\ell,1}(\O)}\vol(\Omega)\,g^{**}\left(\frac1{\vol(\Omega)}\int_\Omega \sigma\,\d x\right)
=\vol(\Omega)\,g^{**}\left(\hat\sigma\right)
=\ell^2(1+|F|^2),
\end{multline*}
exploiting that the average value of any admissible stress field $\sigma$ must equal $\hat\sigma$ due to the divergence constraint.
% In case of $\tilde\J^{\varepsilon,F,\ell}(\O)$ it would actually pay off to round off the vertical edges of $\Omega$ and thereby reduce the perimeter, but this effect is very small.
\end{remark}

\subsection{A model for the intermediate state in type-I superconductors}

Here we briefly recapitulate the model from \cite{ChKoOt04,ChCoKo08} whose lower bounds will be applicable to our setting.
So-called type-I superconductors reveal an interesting intermediate state if exposed to a magnetic field.
% Despite the expectation that they should suppress it completely
They partition into two regions, the superconducting one, in which the magnetic field is suppressed,
and the normal one, where the magnetic field passes through the material without loss.
% This phenomenon has already been studied mathematically in the last century as an example of classical pattern formation, though there was recent progress concerning this topic regarding the rigorous derivation of energy scaling laws that can explain different types of patterns occuring for magnetic fields of different strength. These findings are topic of this chapter and the basis of the connection with our compliance minimization problem. \\
The resulting patterns were analysed via energy scaling laws in \cite{ChKoOt04} (upper bounds) and \cite{ChCoKo08} (matching lower bounds).
The authors consider an infinite material plate of thickness $L$ (we adapt the notation slightly to ours), and assuming periodicity they set
  \begin{equation*}
     \Omega=(0,\ell)^2\times(0,L) \quad \text{and} \quad \Omega^c=(0,\ell)^2\times[(-\infty,0]\cup[L,\infty)]
  \end{equation*}
to represent the material and its complement with periodic boundary conditions along the first two, inplane dimensions
(in fact they rescale lengths such that $\ell=1$).
The plate is exposed to a transverse magnetic field $b_ae_3$ (the `applied field') of strength $b_a>0$.
The magnetic field has been rescaled such that the critical field strength above which the material immediately loses its superconducting properties is $1$.
Consequently, the range of interest for the applied field strength is $0<b_a<1$.

Denoting the induced magnetic field by $B:\Omega\cup\Omega^c\to\R^3$ and the characteristic function of the superconducting region by $\chi:\Omega\to\{0,1\}$, both periodically extended along the first two dimensions, the free energy is given by
  \begin{equation*}
    \G^{\varepsilon,b_a,\ell,L}(B,\chi)=\int_{\Omega}\big(| B-b_ae_3|^2-\chi\big) \, \d x+\varepsilon|\chi|_{\TV(\Omega)}+\int_{\Omega^c}| B-b_ae_3|^2 \, \d x
  \end{equation*}
with the additional constraints
  \begin{equation*}
    \div B=0\text{ in }\R^3\quad\text{and}\quad B\chi=0\text{ in }\Omega
  \end{equation*}
due to Maxwell's equations and the magnetic field being suppressed in the superconducting region, respectively.
%(if the constraints are violated, the energy $\G^{\varepsilon,b_a,\ell,L}(B,\chi)$ is set to infinity).
The first and last term express the magnetic energy due to the discrepancy between induced and applied magnetic field within and outside the material, respectively
(the occurrence of $\chi$ in the first term is motivated via a relaxation argument in \cite{ChKoOt04}).
%The third one penalizes that discrepancy outside the material, i.e. it favours solutions of the problem where $B$ is more or less $b_a$ on the material boundary. This formulation is weaker than requiring strict boundary conditions (as we do in our problem).
The middle term measures the interfacial area between superconducting and normal regions via the total variation of $\chi$, weighted by $\varepsilon$,
representing a (small) magnetic energy loss from the transition between normal and superconducting regions.
% and therefore is completely analogous to our contribution $\varepsilon \, \per(\O)$. However, the reasoning of it within the superconductivity framework is not directly obvious. The point is that the areas of transition between normal and superconducting regions are never  sharp: There is a (material-dependent) penetration depth of the magnetic induction into the field-free regions inevitably accompanied by a small loss of magnetic energy. This loss can effectively be interpreted as a surface energy being proportional to the amount of those areas. \\ Note that the inclusion of surface energy in (2.12) is not justified by its regularization property primarily. In fact, its physical contribution can become dominant at small length scales that are relevant for pattern formation. \\ Let us complete this chapter with a statement of the central theorem whose direct relationship with Theorem 1 is striking. \\ \\
Actually, in \cite{ChKoOt04,ChCoKo08} the total variation is taken with respect to periodic boundary conditions along the first two dimensions,
while we defined $|\chi|_{\TV(\Omega)}$ as the total variation relative to the open set $\Omega$ and thus ignore contributions from the periodic boundary.
Note that this slight change does not affect the energy scaling law proved in \cite{ChKoOt04,ChCoKo08} and cited below,
since none of their constructions shows any contact area between normal and superconducting regions within the periodic boundary and since their lower bound proofs also apply in this setting.
%The following energy scaling is proved in \cite{ChKoOt04,ChCoKo08}.

\begin{theorem}[Energy scaling in intermediate state of type-I superconductors, \cite{ChKoOt04,ChCoKo08}]\label{thm:superconductor}
Let $\Omega=(0,\ell)^2\times(0,L)$ and assume $b_a\leq1$, $\varepsilon<L/4$, and $\ell^3\geq\min\{L^3,\varepsilon L^2/\min\{\sqrt{b_a},(1-b_a)^{3/2}|\log(1-b_a)|\}\}$.
There exist constants $c,C>0$ such that
\begin{equation*}
c \, \ell^2f(\varepsilon,b_a,L) \, \leq\min_{\substack{\div B=0, \\ B\chi=0\text{ in }\Omega}}\G^{\varepsilon,b_a,\ell,L}(B,\chi)-\G^{*,b_a,\ell,L}_{0} \, \leq C \, \ell^2f(\varepsilon,b_a,L)
\end{equation*}
holds for
\begin{equation*}
f(\varepsilon,b_a,L)
=\begin{cases}
\hfil b_a \, \varepsilon^{\frac{4}{7}}L^{\frac{3}{7}} & \text{if } b_a\leq\left(\frac{\varepsilon}{L}\right)^{\frac{2}{7}} \\
\hfil b_a^{\frac{2}{3}}\varepsilon^{\frac{2}{3}}L^{\frac{1}{3}} & \text{if } \left(\frac{\varepsilon}{L}\right)^{\frac{2}{7}}<b_a\leq\frac{1}{2} \\ 
% \hfil\left(1-b_a\right)^{\frac{2}{3}}\varepsilon^{\frac{2}{3}}L^{\frac{1}{3}} & \text{if } \frac{1}{2}<b_a\ll1 \\
\left(1-b_a\right)|\log(1-b_a)|^{\frac{1}{3}}\varepsilon^{\frac{2}{3}}L^{\frac{1}{3}} & \text{if } \frac{1}{2}<b_a\text{ and }\left(\frac{\varepsilon}{L}\right)^{\frac{2}{3}}\leq \, (1-b_a) \, |\log(1-b_a)|^{-\frac{1}{3}} \\
\hfil\left(1-b_a\right)^2L & \text{if } (1-b_a) \, |\log(1-b_a)|^{-\frac{1}{3}}<\left(\frac{\varepsilon}{L}\right)^{\frac{2}{3}}
\end{cases}
\end{equation*}
where $\G^{*,b_a,\ell,L}_{0}=\inf_{\div B=0,B\chi=0\text{ in }\Omega}\G^{0,b_a,\ell,L}(B,\chi)=-\ell^2L(b_a-1)^2$.
\end{theorem}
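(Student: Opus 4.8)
The plan is to obtain \cref{thm:superconductor} from the upper‑bound constructions of \cite{ChKoOt04} and the ansatz‑free lower bounds of \cite{ChCoKo08}, so that the only genuinely new points are the dependence on the in‑plane side length $\ell$ (the cited works normalize $\ell=1$) and the fact that here interfacial area is measured by the relative total variation $|\chi|_{\TV(\Omega)}$ rather than by the total variation with respect to periodic boundary conditions. Accordingly, the ``proof'' is essentially a careful transcription: I would split into the upper and lower bound, and in each case reduce to the corresponding statement in the literature after a rescaling and a check on the total‑variation convention.

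\textbf{Upper bound.} For each of the four regimes \cite{ChKoOt04} provides an in‑plane periodic competitor $(B,\chi)$ on a period cell of some side length $p>0$: a uniformly branched pattern for $b_a\lesssim(\varepsilon/L)^{2/7}$, a self‑similarly refined branching pattern with finitely many generations for $(\varepsilon/L)^{2/7}\lesssim b_a\leq\frac12$, the same type of pattern but with a logarithmic number of generations for $b_a$ close to $1$, and an essentially unstructured dilute configuration for $b_a$ even closer to $1$; in each case the magnetic‑plus‑interfacial energy per unit cross‑sectional area is $f(\varepsilon,b_a,L)$ up to a universal constant. The width hypothesis $\ell^3\geq\min\{L^3,\varepsilon L^2/\min\{\sqrt{b_a},(1-b_a)^{3/2}|\log(1-b_a)|\}\}$ is exactly what guarantees that the period $p$ of the relevant construction is at most $\ell$; one then selects $p'\in[p/2,p]$ dividing $\ell$, tiles $(0,\ell)^2$ by $(\ell/p')^2$ copies of the cell and extends $B$ and $\chi$ periodically. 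Since none of these constructions exhibits an interface between the two phases touching the lateral periodic boundary, $|\chi|_{\TV(\Omega)}$ agrees with the periodic total variation, so the energy of the tiled competitor equals $\ell^2$ times the energy density, giving $\G^{\varepsilon,b_a,\ell,L}(B,\chi)-\G^{*,b_a,\ell,L}_0\lesssim\ell^2 f(\varepsilon,b_a,L)$.

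\textbf{Lower bound.} The lower bounds in \cite{ChCoKo08} rest on local arguments: after passing to the scalar flux formulation and exploiting the divergence constraint together with the exterior Dirichlet‑type energy $\int_{\Omega^c}|B-b_ae_3|^2\,\d x$, one derives on cubes and slabs inside $\Omega\cup\Omega^c$ interpolation inequalities that play the in‑plate cost, the out‑of‑plate cost and the interfacial cost $\varepsilon|\chi|$ against one another, and then sums these local estimates over a partition of $(0,\ell)^2$, which produces a bound on the energy per unit cross‑sectional area and hence the factor $\ell^2$. The interfacial term entering these estimates is the one carried by the interface inside $\Omega$ — the periodic‑boundary contribution is never used — so replacing the periodic total variation by $|\chi|_{\TV(\Omega)}$ does not weaken the argument. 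Subtracting the relaxed ground‑state energy $\G^{*,b_a,\ell,L}_0=-\ell^2L(b_a-1)^2$ then isolates the excess energy and yields $\G^{\varepsilon,b_a,\ell,L}(B,\chi)-\G^{*,b_a,\ell,L}_0\gtrsim\ell^2 f(\varepsilon,b_a,L)$.

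\textbf{Main obstacle.} The substantive part is bookkeeping rather than analysis: one must verify, regime by regime, that the stated width condition $\ell^3\geq\min\{L^3,\dots\}$ matches the period of the optimal construction and the scale on which the lower‑bound interpolation is carried out, and that the localized versions of the estimates in \cite{ChCoKo08} multiply cleanly by $\ell^2$ without any further restriction on $\ell$, $\varepsilon$ or $b_a$ beyond $b_a\leq1$ and $\varepsilon<L/4$. A secondary point is the extremely‑small‑force regime, where one should double‑check that the regime boundary $(\varepsilon/L)^{2/7}$ and the scaling $b_a\varepsilon^{4/7}L^{3/7}$ indeed coincide with those in \cite{ChKoOt04,ChCoKo08}. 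Since all analytic content already resides in those two works, no new inequalities are required.
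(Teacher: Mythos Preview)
Your proposal is correct in substance: you rightly recognize that the theorem is a restatement of results from \cite{ChKoOt04,ChCoKo08}, and that the only points needing attention are the in-plane side length $\ell$ and the total-variation convention. On the latter, the paper agrees with you verbatim (the constructions have no interface on the lateral boundary, and the lower-bound arguments never use the periodic contribution).

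On the $\ell$-dependence, however, the paper takes a much shorter route than your tiling-plus-localization argument. Rather than tiling $(0,\ell)^2$ with copies of the $\ell=1$ construction and then re-reading the lower-bound proofs to check that the local interpolation inequalities sum to a factor $\ell^2$, the paper simply invokes the exact scaling identity
\begin{equation*}
\G^{\varepsilon,b_a,\ell,L}(B,\chi)=\G^{r\varepsilon,b_a,r\ell,rL}(\bar B,\bar\chi)/r^3,
\qquad \bar B(x)=B(x/r),\ \bar\chi(x)=\chi(x/r),
\end{equation*}
which, with $r=1/\ell$, reduces the general statement to the case $\ell=1$ already proved in the cited works (with modified parameters $\varepsilon/\ell$ and $L/\ell$). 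This transfers both the upper and lower bounds in one stroke and spares you the regime-by-regime check of the width condition against the period of each construction and against the localization scale of each lower-bound estimate. Your approach would also work, but it is more laborious and leaves more room for bookkeeping errors of exactly the kind you flag in your ``main obstacle'' paragraph.
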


In fact, the theorem is proved for $\ell=1$ in \cite{ChKoOt04,ChCoKo08}, but the above version immediately follows from the straightforward relation
\begin{equation*}
\G^{\varepsilon,b_a,\ell,L}(B,\chi)
=\G^{r\varepsilon,b_a,r\ell,rL}(\bar B,\bar\chi)/r^3
\qquad\text{with }
\bar B(x)=B(x/r)
\text{ and }
\bar\chi(x)=\chi(x/r)
\end{equation*}
for any $r>0$.
We will actually employ the result for $L=1$ since in our setting we think it more natural to nondimensionalize the domain height rather than its base area to $1$.
The base area then simply enters the energy scaling linearly, as one would expect.

\subsection{The relation between both models}
There are essentially three differences between our compliance minimization problem and the superconductor setting,
\begin{enumerate}
\item
the former has a matrix-valued state variable $\sigma$, the latter only a vector-valued variable $B$,
\item
$\sigma\cdot n$ is prescribed on the top and bottom domain boundary, while for $B\cdot n$ only the deviation from a preferred value is penalized via $\int_{\Omega^c}|B-b_ae_3|^2\,\d x$,
\item
$\sigma\cdot n$ is prescribed as zero on the sides of $\Omega$, while $B\cdot n$ is only required to be compatible with periodic boundary conditions.
\end{enumerate}
As a consequence, as already noted in \cite{KoWi14}, the superconductor energy minorizes the compliance minimization cost.

\begin{lemma}[Superconductor energy minorizes compliance minimization cost]\label{thm:minorization}
For $F=b_a$ we have
\begin{equation*}
\min_{\substack{\div B=0, \\ B\chi=0\text{ in }\Omega}}\G^{\varepsilon,b_a,\ell,1}(B,\chi)-\G^{*,b_a,\ell,1}_{0}
\leq\min_{\O \, \subset \, \Omega} \, \J^{\varepsilon,F,\ell}(\O)-\J^{*,F,\ell}_0.
\end{equation*}
\end{lemma}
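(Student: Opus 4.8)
The plan is to take an arbitrary admissible shape $\O\subset\Omega$ for the compliance problem (with finite cost, so in particular $\O$ supports the load and $\per_\Omega(\O)<\infty$), pick a near-optimal stress field $\sigma\in\Sigma_\ad^{F,\ell,1}(\O)$, and manufacture from it a competitor pair $(B,\chi)$ for the superconductor functional $\G^{\varepsilon,b_a,\ell,1}$ whose energy is no larger than $\J^{\varepsilon,F,\ell}(\O)$, up to the respective ground-state shifts. The natural choice, following the heuristic in the introduction that ``the third row of $\sigma$ behaves like $B$'', is to set $B$ equal to the bottom row of $\sigma$, i.e. $B=\sigma e_3=(\sigma_{13},\sigma_{23},\sigma_{33})^T$ inside $\Omega$, and to set $\chi=1-\chi_\O$ so that the superconducting (field-free) region is precisely the void $\Omega\setminus\O$. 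One checks the two hard constraints: $\div B=\div(\sigma e_3)=(\div\sigma)\cdot e_3=0$ in $\Omega$ because $\div\sigma=0$ columnwise; and $B\chi=(\sigma e_3)(1-\chi_\O)=0$ because $\sigma$ vanishes on $\Omega\setminus\O$. One then extends $B$ to $\Omega^c$ by the constant field $b_ae_3=Fe_3$, which is divergence-free, and must confirm that no distributional divergence is created across the interfaces $\Gamma_\mathrm{t}$, $\Gamma_\mathrm{b}$: this is exactly where the boundary condition $\sigma n=\hat\sigma n=Fe_3$ on $\partial\Omega$ enters, since it forces $B\cdot e_3=\sigma_{33}=F$ to match the normal component of the exterior field on the top and bottom faces, and $\sigma n=0$ on the sides gives compatibility there. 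So $\div B=0$ holds in all of $\R^3$ in the distributional sense, and periodicity of $B$ along the first two coordinates is inherited (or can be arranged) from the fact that $\sigma n=0$ on the vertical sides, matching the torus setting of \cref{thm:superconductor}.

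Next I would compare the energies term by term. For the magnetic term, on $\Omega$ we have $|B-b_ae_3|^2=|\sigma e_3-Fe_3|^2\le|\sigma|^2$ wherever... actually more carefully: we want an \emph{upper} bound on $\G$ in terms of $\comp$, so I would estimate $\int_\Omega|B-Fe_3|^2\,\d x=\int_\O|\sigma e_3-Fe_3|^2\,\d x+\int_{\Omega\setminus\O}F^2\,\d x$. This is where one must be a little careful, because $|\sigma e_3-Fe_3|^2$ is not pointwise bounded by $\frac1{4\mu}|\sigma|^2=|\sigma|^2$ (recall $\mu=\frac14$). The resolution, as in \cite{KoWi14}, is that the relevant comparison is against $\G-\G_0^*$ versus $\comp+\vol-\J_0^*/(\text{const})$; one expands the square, uses $\div B=0$ to see that $\int_\Omega B\cdot e_3\,\d x$ is pinned (its average over any horizontal slice equals $F$ by the flux/divergence argument, exactly as in \cref{rem:largeForce}), and thereby the cross term $-2F\int_\Omega B\cdot e_3$ contributes a fixed quantity that, together with the $-\chi=-(1-\chi_\O)$ term and the exterior term, reassembles into $\G_0^*=-\ell^2(F-1)^2$ plus $\comp$-like and $\vol$-like pieces. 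Concretely I expect the bookkeeping to show $\int_\Omega|B-Fe_3|^2\,\d x-\int_\Omega\chi\,\d x+\int_{\Omega^c}|B-Fe_3|^2\,\d x-\G_0^*\le \int_\Omega|\sigma|^2\,\d x+\vol(\O)-2\ell^2F$, which is precisely $\comp^{F,\ell}(\O)+\vol(\O)-\J_0^{*,F,\ell}$ once one recalls $\J_0^{*,F,\ell}=2\ell^2F$ and $\comp=\int\frac1{4\mu}|\sigma|^2=\int|\sigma|^2$. For the perimeter term, $|\chi|_{\TV(\Omega)}=|1-\chi_\O|_{\TV(\Omega)}=|\chi_\O|_{\TV(\Omega)}=\per_\Omega(\O)$, so $\varepsilon|\chi|_{\TV(\Omega)}=\varepsilon\per_\Omega(\O)$ matches exactly. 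Taking the infimum over admissible $\sigma$ and then over $\O$ on the right, and noting the left side is at least the minimum of $\G-\G_0^*$, yields the claimed inequality.

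The main obstacle is the second step, the energy bookkeeping — in particular verifying that dropping the off-diagonal entries $\sigma_{i3}$ and the ``unused'' rows of $\sigma$ only \emph{decreases} the effective energy, and that the $-\chi$ term and the exterior integral conspire with the cross term to produce exactly the right ground-state constant rather than merely an inequality in the wrong direction. A clean way to organize this is to write, for the superconductor side, $\G^{\varepsilon,b_a,\ell,1}(B,\chi)-\G_0^* = \int_\Omega\big(|B-b_ae_3|^2-\chi+(b_a-1)^2\big)\,\d x+\varepsilon|\chi|_{\TV}+\int_{\Omega^c}|B-b_ae_3|^2$ after absorbing $\G_0^*=-\ell^2(b_a-1)^2$, and to check that with our choices the exterior integral vanishes (since $B\equiv b_ae_3$ there) and the bracket integrand becomes $|B-Fe_3|^2-(1-\chi_\O)+(F-1)^2$, which on the void equals $F^2-1+(F-1)^2=2F^2-2F$ and on the material equals $|\sigma e_3-Fe_3|^2+(F-1)^2$. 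Since $2F^2-2F=2F(F-1)\le 2F-2F=0\le$ the nonnegative void contribution $\vol(\Omega\setminus\O)\cdot 0$... — one then needs the global flux identity to turn $\int_\Omega|\sigma e_3-Fe_3|^2+(F-1)^2$ on $\O$ plus $2F(F-1)$ on $\Omega\setminus\O$ into something bounded by $\int_\O|\sigma|^2+\vol(\O)-2\ell^2F$, which after expanding and using $\int_\Omega\sigma_{33}=\ell^2F$ reduces to the elementary pointwise bound $|\sigma e_3|^2\le|\sigma|^2$ together with $\vol(\O)\ge 0$. I would present this computation in a single displayed chain of inequalities, being careful that no blank line interrupts it, and flag at the start that the only structural inputs are $\div\sigma=0$, the boundary data $\sigma n=\hat\sigma n$, and $\sigma=0$ on $\Omega\setminus\O$ — precisely the three bullet-point differences listed above, each used in exactly one place.
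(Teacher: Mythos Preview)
Your proposal is correct and follows essentially the same approach as the paper: take $B=\sigma e_3$ extended by $Fe_3$ outside $\Omega$, set $\chi=1-\chi_\O$, verify the constraints, and compare energies using the pointwise bound $|\sigma e_3|^2\le|\sigma|^2$ together with the flux identity $\int_\Omega\sigma_{33}\,\d x=F\ell^2$. The paper organizes the bookkeeping slightly more cleanly by first passing from $\int_\Omega|\sigma|^2$ to $\int_\Omega|B|^2$ and then using the flux identity in the compact form $\int_\Omega|B-b_ae_3|^2=\int_\Omega|B|^2-b_a^2\ell^2$, rather than splitting the integrand into void and material contributions as you do; but your computation checks out and yields the same inequality.
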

\begin{proof}
Let $\O\subset\Omega$.
Pick an arbitrary $\delta>0$ and let $\sigma\in\Sigma_\ad^{F,\ell,1}(\O)$ such that $\comp^{F,\ell}(\O)\geq\int_\Omega|\sigma|^2\,\d x-\delta$.
Now set $\chi$ to be the characteristic function of $\Omega\setminus\O$ and $B$ to be the last row of $\sigma$, extended to $\Omega^c$ by $b_ae_3$.
Then by construction, $B$ is divergence-free with $\chi B=0$ in $\Omega$ and periodic boundary conditions along the first two dimensions.
Furthermore,
\begin{align*}
\J^{\varepsilon,F,\ell}(\O)-\J^{*,F,\ell}_0
&=\comp^{F,\ell}(\O)+\vol(\O)+\varepsilon\per_\Omega(\O)-2F\ell^2\\
&\geq\int_\Omega|\sigma|^2\,\d x-\delta+\int_\Omega1-\chi\,\d x+\varepsilon|\chi|_{\TV(\Omega)}-2F\ell^2\\
&\geq\int_\Omega|B|^2\,\d x-\delta+\int_\Omega1-\chi\,\d x+\varepsilon|\chi|_{\TV(\Omega)}-2F\ell^2\\
&=\int_\Omega|B|^2\,\d x-\delta+\int_\Omega1-\chi\,\d x+\varepsilon|\chi|_{\TV(\Omega)}+\int_{\Omega^c}|B-b_ae_3|^2\,\d x-2b_a\ell^2\\
\end{align*}
We now use that the total magnetic flux through any cross-section is conserved since $B$ is divergence-free.
Indeed, let $\Omega_{(0,t)}=(0,\ell)^2\times(0,t)$, then
\begin{multline*}
0=\int_{\Omega_{(0,t)}}\div B\,\d x
=\int_{\partial\Omega_{(0,t)}}B\cdot n\,\d\hd^2\\
=\int_{(0,\ell)^2\times\{t\}}B\cdot e_3\,\d\hd^2-\int_{(0,\ell)^2\times\{0\}}B\cdot e_3\,\d\hd^2
=\int_{(0,\ell)^2\times\{t\}}B\cdot e_3\,\d\hd^2-b_a\ell^2.
\end{multline*}
This implies $\int_\Omega B\cdot e_3\,\d x=\int_0^1\int_{(0,\ell)^2\times\{t\}}B\cdot e_3\,\d\hd^2\,\d t=b_a\ell^2$
and thus $\int_\Omega|B-b_ae_3|^2\,\d x=\int_\Omega|B|^2\,\d x-b_a^2\ell^2$ so that the above estimate can be continued with
\begin{align*}
\J^{\varepsilon,F,\ell}(\O)-\J^{*,F,\ell}_0
&\geq\int_\Omega(|B-b_ae_3|^2-\chi)\,\d x-\delta+\varepsilon|\chi|_{\TV(\Omega)}+\int_{\Omega^c}|B-b_ae_3|^2\,\d x+(b_a-1)^2\ell^2\\
&=\G^{\varepsilon,b_a,\ell,1}(B,\chi)-\G^{*,b_a,\ell,1}_{0}-\delta.
\end{align*}
The result now follows from the arbitrariness of $\delta>0$.
\end{proof}

Since $\tilde\J^{\varepsilon,F,\ell}\geq\J^{\varepsilon,F,\ell}$, the result of course also holds for $\tilde\J^{\varepsilon,F,\ell}$.

\notinclude{
To state the reformulation of the compliance minimization problem via fluxes we use the expression via stress fields, (2.7), and rewrite (1.1), after performing $\tilde{\sigma}\rightarrow\tilde{f}$, as

  \begin{equation}
    \underset{\O \, \subset \, \Omega}{\min} \, \J_{\text{scal}}^{\varepsilon,F,\ell}(\O) \ \ \text{for} \ \J_{\text{scal}}^{\varepsilon,F,\ell}(\O)=\underset{\tilde{f}\in\Phi^{\O}_{\text{ad}}}{\min}\int_{\O}\big| \tilde{f}\big|^2 \, \d x+\vol(\O)+\varepsilon \, \per(\O)
  \end{equation}
\\ 
where we suppose $\tilde{f}=\nabla\tilde{u}$ for a scalar quantity $\tilde{u}:\O\rightarrow\R$, the normal flux $\hat{f}\cdot n:\partial\Omega\rightarrow\R$ with $\hat{f}=F \, e_z$, and 

  \begin{equation}
     \Phi^{\O}_{\text{ad}}=\{\tilde{f}:\Omega\rightarrow\R^3 \, \big| \, \div \, \tilde{f}=0 \ \text{in} \ \O, \, \tilde{f}=0 \ \text{in} \ \Omega\backslash\O, \, \tilde{f}\cdot n=\hat{f}\cdot n \ \text{on} \ \partial\Omega\}
  \end{equation}
\\ 
defines the set of admissible fluxes whose last condition $\tilde{f}\cdot n=\hat{f}\cdot n$ expresses a pointwise flux constraint. We now achieve comparability with the superconductivity case by relaxing (2.13) yielding

  \begin{equation}
    \underset{\O \, \subset \, \Omega}{\min} \, \J_{\text{scal,rel}}^{\varepsilon,F,\ell}(\O) \ \ \text{for} \ \J_{\text{scal,rel}}^{\varepsilon,F,\ell}(\O)=\underset{\tilde{f}\in\Phi^{\O}_{\text{ad,rel}}}{\min}\int_{\O}| \tilde{f}|^2 \, \d x+\int_{\R^3\backslash\Omega}| \tilde{f}-\hat{f}|^2 \, \d x+\vol(\O)+\varepsilon \, \per(\O)
  \end{equation}
\\ 
where 

  \begin{equation*}
    \Phi^{\O}_{\text{ad,rel}}=\big\{\tilde{f}:\R^3\rightarrow\R^3 \, \big| \, \div \, \tilde{f}=0 \ \text{in} \ \R^3, \, \tilde{f}=0 \ \text{in} \ \Omega\backslash\O\big\}
  \end{equation*}
\\ 
replaces (2.14), in doing so omitting the former flux constraint on the boundary. \\ This relaxed version represents a weakening of the upper scalar problem: one considers fluxes on the whole $\R^3$ and penalizes $\big|\tilde{f}-\hat{f}\big|$ instead of imposing strict boundary conditions. Transferring the superconductivity problem (2.12) concerning Theorem 2* (that one with our choice of domain and parameters) in this kind of notation (especially for $B\rightarrow\tilde{f}$ and $b_a\rightarrow\hat{f}$) we have 

  \begin{equation}
    \underset{\O \, \subset \, \Omega}{\min} \, \J_{\text{SC}}^{\varepsilon,F,\ell}(\O) \ \ \text{for} \ \J_{\text{SC}}^{\varepsilon,F,\ell}(\O)=\underset{\tilde{f}\in\Phi^{\O}_{\text{ad,rel}}}{\min}\int_{\O\cup(\R^3\backslash\Omega)}\big|\tilde{f}-\hat{f}\big|^2 \, \d x+\int_{\Omega\backslash\O}\left(\big|\hat{f}\big|^2-1\right) \, \d x+\varepsilon \, \per(\O)
  \end{equation}
\\ 
where we just split up the single 'min' of (2.12) into two ones and used that $\tilde{f}=0 \ \text{in} \ \Omega\backslash\O$, i.e. any integrand including $\tilde{f}$ may equivalently be integrated over $\Omega$ or $\O$. \\ We have to show now that (2.15) and (2.16) are equivalent if the associated results for $\varepsilon=0$ are subtracted and, moreover, the solution of our full problem is bounded below by its scalar relaxed version. \\ \\ 
\textbf{Lemma 2. (Connection of the two problems)} \\
\textit{Let the functionals regarding the full, the scalar and the scalar relaxed version of our compliance minimization problem be given as before, then one has}

  \begin{equation*}
     \J^{\varepsilon,F,\ell}(\O)\geq\J_{\text{scal}}^{\varepsilon,F,\ell}(\O)\geq\J_{\text{scal,rel}}^{\varepsilon,F,\ell}(\O)
  \end{equation*}
\\	  
\textit{and especially} 

  \begin{equation*}
     \J_{\text{scal,rel}}^{\varepsilon,F,\ell}(\O)-\J_0^{*,F,\ell}=\J_{\text{SC}}^{\varepsilon,F,\ell}(\O)-\J_{\text{SC},0}^{*,F,\ell}
  \end{equation*}
\\	  
\textit{where} $\J_0^{*,F,\ell}$ \textit{and} $\J_{\text{SC},0}^{*,F,\ell}$ \textit{denote the solutions of the associated compliance minimization (any version) and superconductivity problem for $\varepsilon=0$, respectively. Consequently, any lower bound of the superconductivity problem is also a lower bound of the full compliance minimization problem.} \\ \\
\textit{Proof.} The first double inequality is easy. Its part

  \begin{equation*}
     \J_{\text{scal}}^{\varepsilon,F,\ell}(\O)\geq\J_{\text{scal,rel}}^{\varepsilon,F,\ell}(\O)
  \end{equation*}
\\	 
holds since $\tilde{f}$ is, by definition, unbounded in $\R^3\backslash\Omega$. To get the second part we observe that any admissible flux $\tilde{f}$ can be identified with the $z$-row of an admissible stress tensor $\tilde{\sigma}$ such that we have

  \begin{equation*}
     \int_{\O}|\tilde{\sigma}|^2 \, \d x\geq\int_{\O}\tilde{\sigma}_{3j} \, \tilde{\sigma}_{3j} \, \d x=\int_{\O}\big|\tilde{f}\big|^2 \, \d x
  \end{equation*}
\\
and

  \begin{equation*}
    \J^{\varepsilon,F,\ell}(\O)\geq\J_{\text{scal}}^{\varepsilon,F,\ell}(\O)
  \end{equation*}
\\	  
is valid. \\ To prove the equality of the optimization problems we make use of the property 

  \begin{equation}
    \int_{[0,\ell]^2}\tilde{f}_z \, \d x_1 \, \d x_2=\ell^2F
  \end{equation}
\\	 
for any cross section $x_3=z_0\in\Omega$ which is a consequence of combining the divergence criterion with the boundary condition $\hat{f}$ (see Lemma 2.2 in chapter 3.1). Let us now transfer the superconductor functional from (2.16) into the slightly different (but fully equivalent) form

  \begin{equation}
    \begin{aligned}
      \J_{\text{SC}}^{\varepsilon,F,\ell}(\O)-\J_{\text{SC},0}^{*,F,\ell}
      &=\underset{\tilde{f}\in\Phi^{\O}_{\text{ad,rel}}}{\min}\int_{\O}\left(\big|\tilde{f}\big|^2-2\tilde{f}\cdot\hat{f}\right) \, \d x+\int_{\Omega}\left(\big|\hat{f}\big|^2-1\right) \, \d x+\int_{\R^3\backslash\Omega}\big|\tilde{f}-\hat{f}\big|^2 \, \d x \\
      & \ \ \ \ \ \ \ \ \ \ \ \ \ \ \ \ \ \ \ \ \ \ \ \ \ \ \ \ +\vol(\O)+\varepsilon \, \per(\O)+\ell^2(F-1)^2
    \end{aligned}
  \end{equation}
\\	  
where we inserted $\J_{\text{SC},0}^{*,b_a,\ell}$ from Theorem 2* (with $b_a$ replaced by $F$). Regarding the integrals we already find the expressions occurring in (2.15) here, the remaining integrands we handle with (2.17) to get, including the $\J_{\text{SC},0}^{*,F,\ell}$-term and keeping in mind that $\hat{f}=F \, e_z$, 

  \begin{equation*}
   	\int_{\O}\left(-2\tilde{f}\cdot\hat{f}\right) \, \d x+\int_{\Omega}\left(\big|\hat{f}\big|^2-1\right) \, \d x+\ell^2(F-1)^2=-2\ell^2F^2+\ell^2F^2-\ell^2+\ell^2(F-1)^2=-2\ell^2F
  \end{equation*}
\\	  
which exactly equals $-\J_0^{*,F,\ell}$ so that the second claim of the lemma follows. \ \ \ \ \ \ \ \ $\square$ \\ \\
We finish the discussion with a formulation of (2.18) (interpreted as our scalar relaxed problem) in a slightly different manner that, again analogously to the superconductivity situation, leads over to the lower bound modifications that will be addressed in the subsequent section. For that purpose first note that

  \begin{equation*}
    \int_{\Omega}\left(F^2-1+2(1-F)\tilde{f}_z\right) \, \d x=-\ell^2(F-1)^2
  \end{equation*}
\\
holds due to (2.17). Employing also the definition of our characteristic function $\chi$ from chapter 1.3 (with its properties $\tilde{f}(1-\chi)=0$ in $\Omega$ and $\int_{\Omega}\chi \, \d x=\vol(\O)$) we additionally find 

  \begin{equation*}
    \big|\tilde{f}-\hat{f}\big|^2-(1-\chi)-\left(F^2-1+2(1-F)\tilde{f}_z\right)=\tilde{f}_x^2+\tilde{f}_y^2+\chi \, (\tilde{f}_z-1)^2
  \end{equation*}
\\
so that, combining these two insights and exploiting that $\tilde{f}=0$ in $\Omega\backslash\O$ as before,

  \begin{equation}
    \J_{\text{scal,rel}}^{\varepsilon,F,\ell}(\O)-\J_0^{*,F,\ell}=\int_{\Omega}\left(\tilde{f}_x^2+\tilde{f}_y^2+\chi \, (\tilde{f}_z-1)^2\right) \, \d x+\varepsilon \, \per(\O)+\int_{\R^3\backslash\Omega}\big|\tilde{f}-\hat{f}\big|^2 \, \d x
  \end{equation}
\\	  
is an equivalent formulation of (2.18). The term of interest for the lower bounds is $\int_{\Omega}\chi \, (\tilde{f}_z-1)^2 \, \d x$ - it will be studied in detail within the proof of Theorem 4.3* in the next section.
}%\notinclude

\section{Lower bounds}

In this section we prove the lower bounds of \cref{thm:scaling}.
Due to \cref{thm:minorization,thm:superconductor} only three lower bounds remain to be shown,
\begin{enumerate}
\item
the bound $\tilde\J^{\varepsilon,F,\ell}(\O)-\J^{*,F,\ell}_0\gtrsim\max\{\varepsilon\ell^2,\sqrt F\varepsilon\ell\}$
for the case when the perimeter regularization is performed with $\per_{\R^3}$ rather than $\per_\Omega$,
\item
the bound $\J^{\varepsilon,F,\ell}(\O)-\J^{*,F,\ell}_0\gtrsim\varepsilon\ell^2$ in the regime $F\leq\sqrt\varepsilon$, and
\item
the bound $\J^{\varepsilon,F,\ell}(\O)-\J^{*,F,\ell}_0\gtrsim F^{\frac23}\varepsilon^{\frac23}\ell^2$ in the regime $\varepsilon^{\frac12}\leq F<\varepsilon^{\frac27}$
(while for larger $F$ this bound is already implied by \cref{thm:minorization,thm:superconductor}).
\end{enumerate}
The following sections provide the corresponding estimates.
Before, let us briefly introduce some notation.
For $t\in[0,1]$ we will abbreviate
\begin{equation*}
\Omega_{(0,t)}=(0,\ell)^2\times(0,t),
\qquad
\Omega_t=(0,\ell)^2\times\{t\},
\qquad\text{and}\qquad
\O_t=\O\cap\Omega_t.
\end{equation*}
We will use that any stress $\sigma\in\Sigma_\ad^{F,\ell,1}(\O)$ has the same average vertical tension in all cross-sections.
Indeed, for almost all $t\in(0,1)$ we have
\begin{multline*}
0=\int_{\Omega_{(0,t)}}\div\sigma\,\d x
=\int_{\partial\Omega_{(0,t)}}\sigma n\,\d\hd^2\\
=\int_{\Omega_t}\sigma e_3\,\d\hd^2-\int_{\Omega_0}\sigma e_3\,\d\hd^2
=\int_{\Omega_t}\sigma e_3\,\d\hd^2-F\ell^2e_3,
\end{multline*}
which implies
\begin{equation*}
\int_{\O_t}\sigma e_3\,\d\hd^2=F\ell^2e_3.
\end{equation*}
Using Jensen's inequality this implies a bound on the compliance in almost all cross-sections,
\begin{equation}\label{eqn:minCompliance}
\int_{\Omega_t}|\sigma|^2\,\d\hd^2
\geq\int_{\O_t}|e_3^T\sigma e_3|^2\,\d\hd^2
\geq\frac1{\hd^2(\O_t)}\left|\int_{\O_t}e_3^T\sigma e_3\,\d x\right|^2
=\frac{F^2\ell^4}{\hd^2(\O_t)}.
\end{equation}

\subsection{Lower bound on exterior perimeter contribution}
Here we estimate the cost contribution from the surface area of the optimal geometry $\O$ within $\partial\Omega$.
Since necessarily $\partial\O$ is a subset of the top and bottom face $\Gamma_{\mathrm{t}}$ and $\Gamma_{\mathrm{b}}$ of $\Omega$ (as otherwise the compliance is infinite),
we automatically have
\begin{equation*}
\tilde\J^{\varepsilon,F,\ell}(\O)-\J^{*,F,\ell}_0
\geq\varepsilon\per_{\R^3}(\O)
\geq2\varepsilon\ell^2.
\end{equation*}
It thus remains to estimate the cost contribution related to the perimeter at the sides of the domain.
To this end we use the following elementary result.

\begin{lemma}[Polynomial estimate]\label{thm:polynomialEstimate}
For any $y,a>0$ we have
\begin{equation*}
\left(\frac1y-y\right)^2+ay\geq \frac12\min\{a,a^{2/3}\}.
\end{equation*}
\end{lemma}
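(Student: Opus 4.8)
The plan is to split the range of $y$ at the threshold $\tfrac12$, exploiting that the square term $\left(\frac1y-y\right)^2$ vanishes only near $y=1$: when $y$ is bounded away from $0$ the linear term $ay$ already carries the bound, and when $y$ is small the square term is of order $y^{-2}$ and dominates everything else.

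For $y\ge\tfrac12$ I would simply discard the nonnegative square and use
$\left(\frac1y-y\right)^2+ay\ge ay\ge\frac a2\ge\frac12\min\{a,a^{2/3}\}$,
where the last step holds because $\min\{a,a^{2/3}\}\le a$.

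For $0<y<\tfrac12$ I would first record that $\frac1y-y=\frac{1-y^2}{y}\ge\frac{3/4}{y}>0$ (since $y^2<\tfrac14$), hence $\left(\frac1y-y\right)^2\ge\frac{9}{16y^2}$, and then apply the weighted arithmetic--geometric mean inequality to the three nonnegative numbers $\frac{9}{16y^2}$, $\frac{ay}{2}$, $\frac{ay}{2}$, whose product $\frac{9a^2}{64}$ carries no $y$-dependence:
\begin{align*}
\left(\frac1y-y\right)^2+ay
&\ge\frac{9}{16y^2}+\frac{ay}2+\frac{ay}2
\ge 3\left(\frac{9a^2}{64}\right)^{1/3}\\
&=\frac{3\sqrt[3]{9}}{4}\,a^{2/3}
\ge\frac12 a^{2/3}
\ge\frac12\min\{a,a^{2/3}\},
\end{align*}
using $\sqrt[3]{9}>2$ for the penultimate inequality. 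Combining the two cases proves the lemma.

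I do not expect a genuine obstacle here: the statement is elementary. The only mildly delicate choices are the threshold (any fixed constant in $(0,1)$ works; $\tfrac12$ keeps the numbers clean) and the splitting of $ay$ into two equal halves in the AM--GM step, which is precisely what makes the three factors multiply to a $y$-independent constant and produces the exponent $a^{2/3}$. Minimizing $g(y)=\left(\frac1y-y\right)^2+ay$ directly in $y$ would force one to solve a quartic for the critical point, so the AM--GM route is preferable.
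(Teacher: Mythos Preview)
Your proof is correct and follows essentially the same strategy as the paper: split at $y=\tfrac12$, and for small $y$ bound the square term below by a multiple of $y^{-2}$ and then balance against $ay$. The paper uses explicit calculus minimization of $\tfrac{1}{2z^2}+az$ where you use AM--GM; these amount to the same thing.

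Your treatment of $y\ge\tfrac12$ is actually cleaner than the paper's. The paper further subdivides into $y\in[\tfrac12,1)$ and $y\ge1$, bounding $(1/y-y)^2\ge\tfrac12(y-1)^2$ on the first subinterval and then minimizing the resulting quadratic, and invoking monotonicity of $f$ on the second. Your observation that simply dropping the square and keeping $ay\ge a/2$ already suffices collapses those two cases into one line.
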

\begin{proof}
Abbreviate $f(y)=(1/y-y)^2+ay$, then for any $y\in(0,\frac12)$ we have
\begin{equation*}
f(y)
\geq\frac12\left(\frac1y\right)^2+ay
\geq\min_{z>0}\frac1{2z^2}+az
=\frac32a^{2/3},
\end{equation*}
where the minimizer is given by $z=a^{-1/3}$.
Furthermore, for any $y\geq1$ we have
\begin{equation*}
f(y)
\geq f(1)
=a.
\end{equation*}
Finally, for $y\in[\frac12,1)$ we have
\begin{equation*}
f(y)
\geq\frac12(y-1)^2+ay.
\end{equation*}
This expression is minimized by $y=1-a$ so that
\begin{equation*}
f(y)
\geq\left.\begin{cases}
a-\frac{a^2}2&\text{if }a<\frac12,\\
\frac18+\frac a2&\text{else}
\end{cases}
\right\}\geq\frac a2.
\qedhere
\end{equation*}
\end{proof}

The desired estimate now follows essentially from the isoperimetric inequality.

\begin{proposition}[Exterior perimeter estimate]
%Assume $F\leq1$ and $\varepsilon\leq\ell$.
For any $\O\subset\Omega=(0,\ell)^2\times(0,1)$ we have
\begin{equation*}
\tilde\J^{\varepsilon,F,\ell}(\O)-\J^{*,F,\ell}_0\gtrsim\min\left\{\sqrt F\varepsilon\ell,(F\varepsilon\ell^2)^{2/3}\right\}.
\end{equation*}
\end{proposition}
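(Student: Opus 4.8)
\end{proposition}

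The plan is to reduce this to a one-dimensional variational problem over the cross-sectional areas and then invoke \cref{thm:polynomialEstimate}. Throughout we may assume $F>0$ and that $\comp^{F,\ell}(\O)<\infty$ and $\per_{\R^3}(\O)<\infty$, since otherwise $\tilde\J^{\varepsilon,F,\ell}(\O)=\infty$ and the claim is trivial. Set $A(t)=\hd^2(\O_t)$ for $t\in(0,1)$; finiteness of the compliance forces $A(t)>0$ for almost every $t$. We will bound each of the three cost contributions from below by an integral over the horizontal slices $\Omega_t$ and then optimize slicewise.

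First, for any admissible stress field $\sigma\in\Sigma_\ad^{F,\ell,1}(\O)$, inequality \eqref{eqn:minCompliance} together with Fubini's theorem gives $\int_\Omega|\sigma|^2\,\d x\geq\int_0^1\frac{F^2\ell^4}{A(t)}\,\d t$, hence $\comp^{F,\ell}(\O)\geq\int_0^1\frac{F^2\ell^4}{A(t)}\,\d t$. Second, trivially $\vol(\O)=\int_0^1 A(t)\,\d t$ and $\J^{*,F,\ell}_0=2F\ell^2=\int_0^1 2F\ell^2\,\d t$. Third, slicing $\O$ by horizontal planes, a standard slicing property of sets of finite perimeter yields $\per_{\R^3}(\O)\geq\int_0^1\per_{\R^2}(\O_t)\,\d t$, and the planar isoperimetric inequality gives $\per_{\R^2}(\O_t)\geq 2\sqrt\pi\,\sqrt{A(t)}$. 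Combining the three pieces and using $\frac{F^2\ell^4}{A}-2F\ell^2+A=\bigl(\frac{F\ell^2}{\sqrt A}-\sqrt A\bigr)^2$, we arrive at $\tilde\J^{\varepsilon,F,\ell}(\O)-\J^{*,F,\ell}_0\geq\int_0^1\bigl[(\tfrac{F\ell^2}{\sqrt{A(t)}}-\sqrt{A(t)})^2+2\sqrt\pi\,\varepsilon\sqrt{A(t)}\bigr]\,\d t$.

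It then remains to substitute $y(t)=\sqrt{A(t)/(F\ell^2)}\in(0,\infty)$, so that the integrand equals $F\ell^2\bigl[(1/y(t)-y(t))^2+a\,y(t)\bigr]$ with the $t$-independent constant $a=2\sqrt\pi\,\varepsilon/(\sqrt F\ell)$. Applying \cref{thm:polynomialEstimate} pointwise in $t$ bounds the integrand below by $\tfrac12 F\ell^2\min\{a,a^{2/3}\}$, and integration over $(0,1)$ gives $\tilde\J^{\varepsilon,F,\ell}(\O)-\J^{*,F,\ell}_0\geq\tfrac12 F\ell^2\min\{a,a^{2/3}\}$; a short computation identifies $F\ell^2 a=2\sqrt\pi\,\sqrt F\varepsilon\ell$ and $F\ell^2 a^{2/3}=(2\sqrt\pi)^{2/3}(F\varepsilon\ell^2)^{2/3}$, which is the claimed bound. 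The only nonelementary ingredient, and the one requiring care, is the perimeter slicing inequality $\per_{\R^3}(\O)\geq\int_0^1\per_{\R^2}(\O_t)\,\d t$; this is also precisely where the full $\R^3$-perimeter is genuinely used, as replacing it by $\per_\Omega$ would permit thin wall-hugging slices that evade the planar isoperimetric inequality and leave only a weaker estimate.
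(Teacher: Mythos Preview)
Your proof is correct and follows essentially the same route as the paper: slice horizontally, bound compliance via \eqref{eqn:minCompliance}, bound perimeter via slicing plus the planar isoperimetric inequality, substitute $y=\sqrt{A/(F\ell^2)}$, and apply \cref{thm:polynomialEstimate}. If anything, you are slightly more explicit than the paper in naming the perimeter slicing inequality (the paper subsumes it under ``Fubini's theorem'') and in noting why $\per_{\R^3}$ rather than $\per_\Omega$ is needed here.
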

\begin{proof}
Let $A_t=\hd^2(\O_t)$ denote the area of $\O_t$.
The isoperimetric inequality implies
\begin{equation*}
\per_{\R^2}(\O_t)
\geq2\sqrt{\pi A_t},
\end{equation*}
where $\per_{\R^2}$ indicates the perimeter of the two-dimensional set $\O_t$ (projected into $\R^2$).
Now by Fubini's theorem, \eqref{eqn:minCompliance} and \cref{thm:polynomialEstimate} we have
\begin{align*}
\tilde\J^{\varepsilon,F,\ell}(\O)-\J^{*,F,\ell}_0
&\geq\inf_{\sigma\in\Sigma_\ad^{F,\ell,1}(\O)}\int_0^1\int_{\Omega_t}|\sigma|^2\,\d x+A_t+\varepsilon\per_{\R^2}(\O_t)-2F\ell^2\,\d t\\
&\geq\int_0^1\frac{F^2\ell^4}{A_t}+A_t+2\varepsilon\sqrt{\pi A_t}-2F\ell^2\,\d t\\
&=F\ell^2\int_0^1\left(\frac1{y(t)}-y(t)\right)^2+\frac{2\sqrt\pi\varepsilon}{\sqrt{F\ell^2}}y(t)\,\d t\\
&\geq F\ell^2\int_0^1\frac12\min\left\{\frac{\varepsilon}{\sqrt{F\ell^2}},\left(\frac{\varepsilon}{\sqrt{F\ell^2}}\right)^{2/3}\right\}\,\d t\\
&\gtrsim\min\left\{\sqrt F\varepsilon\ell,(F\varepsilon\ell^2)^{2/3}\right\}
\end{align*}
where we abbreviated $y(t)=\sqrt{A_t/(F\ell^2)}$.
\end{proof}

Summarizing, in this section we have shown
\begin{equation*}
\tilde\J^{\varepsilon,F,\ell}(\O)-\J^{*,F,\ell}_0
\gtrsim\max\left\{\varepsilon\ell^2,\min\left\{\sqrt F\varepsilon\ell,(F\varepsilon\ell^2)^{2/3}\right\}\right\}.
\end{equation*}
Now assume the right-hand side equals $(F\varepsilon\ell^2)^{2/3}$ and not $\sqrt F\varepsilon\ell$,
then consequently $\sqrt F\varepsilon\ell>(F\varepsilon\ell^2)^{2/3}\geq\varepsilon\ell^2$ and thus $\ell<\sqrt F$.
For $F\leq1$ and $\ell^3\geq\min\{1,\varepsilon/\sqrt F\}$ as in \cref{thm:scaling} this implies $\ell<1$ and thus $\ell>\ell^3\geq\varepsilon/\sqrt F$.
However, this implies $\sqrt F\varepsilon\ell<(F\varepsilon\ell^2)^{2/3}$, a contradiction.
Therefore we have actually shown
\begin{equation*}
\tilde\J^{\varepsilon,F,\ell}(\O)-\J^{*,F,\ell}_0
\gtrsim\max\left\{\varepsilon\ell^2,\sqrt F\varepsilon\ell\right\},
\end{equation*}
as desired.

\subsection{Lower bound for extremely small force}

Here we aim to exploit that for small forces almost no material may be used so that most cross-sections are almost empty.
The transition to the boundary then produces substantial perimeter cost.

\begin{proposition}[Interior perimeter estimate]
Let $\varepsilon<\frac1{8}$ and $F<\frac14$. For any $\O\subset\Omega=(0,\ell)^2\times(0,1)$ we have
\begin{equation*}
\J^{\varepsilon,F,\ell}(\O)-\J^{*,F,\ell}_0\geq\varepsilon\ell^2.
\end{equation*}
\end{proposition}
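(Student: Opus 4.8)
The plan is to quantify the claim that in the regime $F < 1/4$ and $\varepsilon < 1/8$ the optimal structure can contain very little material on most cross-sections, so that it must transition from the full top and bottom faces $\Gamma_{\mathrm{t}}, \Gamma_{\mathrm{b}}$ (which are forced to lie in $\partial\O$) to almost empty cross-sections, at a perimeter cost of roughly $\hd^2(\Gamma_{\mathrm{t}} \cup \Gamma_{\mathrm{b}}) = 2\ell^2$. First I would fix any $\O \subset \Omega$ and (if its cost is finite, otherwise the bound is trivial) pick a near-optimal $\sigma \in \Sigma_\ad^{F,\ell,1}(\O)$. Using $A_t = \hd^2(\O_t)$ together with \eqref{eqn:minCompliance} and Fubini, the excess cost dominates $\int_0^1 \left( \tfrac{F^2\ell^4}{A_t} + A_t - 2F\ell^2 \right)\d t + \varepsilon \per_\Omega(\O)$. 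The pointwise integrand $\tfrac{F^2\ell^4}{A_t} + A_t - 2F\ell^2 = F\ell^2\left(\tfrac{1}{y(t)} - y(t)\right)^2 \geq 0$ with $y(t) = \sqrt{A_t/(F\ell^2)}$ is nonnegative but small precisely when $A_t \approx F\ell^2$, i.e.\ when the cross-section uses about a $F$-fraction of the available area; so volume/compliance alone do not force large perimeter and one genuinely needs the boundary term.

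The core estimate is a slicing/coarea argument for the perimeter. The idea is that since $\O_0 = \Omega_0$ and $\O_1 = \Omega_1$ both have full area $\ell^2$, while on a set of heights of positive measure $A_t$ must be small (this is where $F < 1/4$ enters: the compliance/volume penalty makes it advantageous to have $A_t$ close to $F\ell^2 < \ell^2/4$ on most cross-sections, and one can make this rigorous by showing that the set $\{t : A_t > \ell^2/2\}$ has small measure, since on that set the integrand $F\ell^2(1/y-y)^2$ is bounded below by a positive constant times $\ell^2$, so a large such set would already produce excess cost $\gtrsim \ell^2 \gg \varepsilon \ell^2$). Then I would use the fact that $\per_\Omega(\O) \geq \int_0^1 \hd^1(\partial^*\O_t \cap \Omega_t^{\circ}) \, \d t$ is not quite enough — horizontal slices only see the vertical-normal part — so instead I would slice horizontally: for the two boundary regions one wants the lateral surface. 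The cleaner route is: by the coarea/slicing formula for sets of finite perimeter applied to the height function $x_3$, the part of $\partial^*\O$ with (essentially) vertical normal has $\hd^2$-measure at least $|A_t - A_s|$ summed appropriately; more precisely $\per_\Omega(\O) \geq \hd^2(\{x \in \partial^*\O : n(x) \parallel e_3\}) \geq \mathrm{ess\,sup}_t |\ell^2 - A_t|$ is false in general, but $\per_\Omega(\O) \geq |D\chi_\O|(\Omega) \geq \mathrm{Var}(t \mapsto A_t)$ (total variation of the area profile on $(0,1)$) does hold since the measure projection of $|D\chi_\O|$ onto the $x_3$-axis dominates the distributional derivative of $t \mapsto A_t$. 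Since $A_t \to \ell^2$ as $t \to 0^+$ and $t\to 1^-$ (boundary faces are full, and $\chi_\O$ is lower semicontinuous in the relevant sense near the closed faces) while $A_t \leq \ell^2/2$ on a set of measure $> 1/2$, the total variation of the profile is $\geq 2(\ell^2 - \ell^2/2) = \ell^2$, giving $\varepsilon \per_\Omega(\O) \geq \varepsilon \ell^2$.

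The main obstacle, and the step requiring the most care, is making rigorous the claim that $A_t$ is forced down to essentially $\ell^2/2$ or below on most cross-sections while simultaneously equalling $\ell^2$ at the two boundary faces, and then converting this into a lower bound on $\per_\Omega(\O)$ with the sharp constant $1$ (rather than $\tfrac12$ or some other factor). In particular one must handle the relative perimeter carefully: contributions of $\partial\O$ inside $\partial\Omega$ do not count, but the vertical ``walls'' produced by the transition from a full face to a thin interior structure lie in the \emph{interior} of $\Omega$ and do count, and one needs that these walls contribute close to $\ell^2$ on the top and close to $\ell^2$ on the bottom, totalling $2\varepsilon\ell^2$ if one is greedy — but the proposition only claims $\varepsilon\ell^2$, so a clean argument giving the factor $1$ suffices and there is slack. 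I would therefore argue: either the excess cost from compliance-plus-volume already exceeds $\varepsilon\ell^2$ (done), or else $A_t$ stays within, say, $[\tfrac{F\ell^2}{2}, 2F\ell^2] \subset [0, \ell^2/2]$ for all $t$ outside a set $G$ with $|G| \leq \tfrac14$ (using $F<1/4$ and the pointwise bound $F\ell^2(1/y-y)^2$, integrated, must be $\leq \varepsilon\ell^2 < \ell^2/8$), and then on $(0,1)\setminus G$ there exist points arbitrarily close to $0$ and to $1$ with $A_t \leq \ell^2/2$ while $\lim_{t\to 0^+}\mathrm{ess\,}A_t = \ell^2$ forces $\mathrm{Var}_{(0,1)}(A_\cdot) \geq 2(\ell^2 - \ell^2/2) = \ell^2 \leq \per_\Omega(\O)$, completing the proof. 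The subtle points are the boundary trace $A_{0^+} = \ell^2$ (which follows because $\Gamma_{\mathrm b} \subset \partial\O$ and $\O$ must fill a neighbourhood of $\Gamma_{\mathrm b}$ to have finite compliance, or else one argues via the load-carrying requirement) and the inequality $\per_\Omega(\O) \geq \mathrm{Var}_{(0,1)}(t \mapsto A_t)$, which is the one-dimensional projection of the total variation measure and holds for any set of finite perimeter.
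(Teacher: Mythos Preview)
Your approach is correct and essentially matches the paper's: a dichotomy on whether $A_t\geq\ell^2/2$ for a.e.\ $t$ (in which case $F<\tfrac14$ and $\varepsilon<\tfrac18$ make the compliance-plus-volume excess at least $2\ell^2(F-\tfrac12)^2>\varepsilon\ell^2$), and otherwise a perimeter lower bound coming from the vertical transition between the full boundary faces and a thin interior slice. The paper's execution is slightly more direct---rather than going through $\mathrm{Var}_{(0,1)}(t\mapsto A_t)$ it slices line by line,
\[
\per_\Omega(\O)\geq\int_{(0,\ell)^2}|\chi_\O(x_1,x_2,\cdot)|_{TV((0,1))}\,\d(x_1,x_2)
\geq\int_{(0,\ell)^2}\bigl(\chi_\O(\cdot,0)+\chi_\O(\cdot,1)-2\chi_\O(\cdot,t)\bigr)
=2(\ell^2-A_t),
\]
which sidesteps your detour through the good set $G$ and the (slightly muddled) discussion of points near the endpoints.
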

\begin{proof}
%We argue by contradiction.
%Assume the bound would not hold for a particular $\O\subset\Omega$.
%Then there is a non-negligible set of cross-sections $t\in(0,1)$ with $A_t\vcentcolon=\hd^2(\O_t)\leq\ell^2/2$.
There are two cases, depending on whether $A_t \geq l^2/2$ for almost every cross-section $t\in(0,1)$ or not.
In the former case, by Fubini's theorem and \eqref{eqn:minCompliance} we have
\begin{multline*}
\J^{\varepsilon,F,\ell}(\O)-\J^{*,F,\ell}_0
\geq\inf_{\sigma\in\Sigma_\ad^{F,\ell,1}(\O)}\int_0^1\int_{\Omega_t}|\sigma|^2\,\d x+A_t-2F\ell^2\,\d t\\
\geq\int_0^1\frac{F^2\ell^4}{A_t}+A_t-2F\ell^2\,\d t
\geq\int_0^1\frac{2F^2\ell^4}{\ell^2}+\frac{\ell^2}2-2F\ell^2\,\d t
=2\ell^2(F-\tfrac12)^2
>\varepsilon\ell^2,
\end{multline*}
as desired. In the latter case let $t\in(0,1)$ be \notinclude{such }a cross-section with $A_t\leq\ell^2/2$.
Then
\begin{equation*}
\J^{\varepsilon,F,\ell}(\O)-\J^{*,F,\ell}_0
\geq\varepsilon\per_{\Omega}(\O)
\geq\varepsilon\int_{(0,\ell)^2}|\chi_\O(x_1,x_2,\cdot)|_{\TV((0,1))}\,\d(x_1,x_2)
\end{equation*}
for $\chi_\O$ the characteristic function of $\O$ and $|\cdot|_{\TV((0,1))}$ the total variation seminorm of a function on $(0,1)$.
Due to
\begin{align*}
|\chi_\O(x_1,x_2,\cdot)|_{\TV((0,1))}
&\geq|\chi_\O(x_1,x_2,0)-\chi_\O(x_1,x_2,t)|+|\chi_\O(x_1,x_2,t)-\chi_\O(x_1,x_2,1)|\\
&\geq\chi_\O(x_1,x_2,1)+\chi_\O(x_1,x_2,0)-2\chi_\O(x_1,x_2,t)
\end{align*}
we obtain
\begin{multline*}
\J^{\varepsilon,F,\ell}(\O)-\J^{*,F,\ell}_0
\geq\varepsilon\int_{(0,\ell)^2}\chi_\O(x_1,x_2,1)+\chi_\O(x_1,x_2,0)-2\chi_\O(x_1,x_2,t)\,\d(x_1,x_2)\\
=2\varepsilon(\ell^2-A_t)
\geq\varepsilon\ell^2
\end{multline*}
as desired, where $\chi_\O(\cdot,\cdot,s)$ with $s=0,1$ is the trace of the function of bounded variation $\chi_\O$
and equals $1$ whenever $\comp^{F,\ell}(\O)<\infty$.
\end{proof}

\notinclude{
Compared to the proof of Theorem 4.3* just performed the upcoming proof of Theorem 4.2* is considerably easier since it relies on a simple property of characteristic functions. Let us state it again for the reader's convenience. \\ \\
\textbf{Theorem 4.2*} \textit{There exists a constant (implicit in the notation below) such that if $F,\varepsilon,L$ satisfy}
	
  \begin{equation}
     F\lesssim\left(\frac{\varepsilon}{L}\right)^{\frac{1}{2}}\leq \, \frac{1}{2}
  \end{equation}
\\
\textit{then for any $\chi\in BV((0,L)\times Q;{0,1})$, any $\tilde{f}$ \textit{such that} $\tilde{f}-\hat{f}\in L^2(\R\times Q;\R^3)$ with $\hat{f}=F \, e_z$, both $Q$-periodic and obeying the compatibility conditions} $\div \, \tilde{f}=0$ and $\tilde{f}(1-\chi)=0$ \textit{a.e., we have}

  \begin{equation*}
    \Delta\J \, \gtrsim \, \varepsilon \ \ \ .
  \end{equation*}
\\
\textit{Proof.} First note that at the meeting point of regimes (3.6) and (3.21) where $\left(\frac{\varepsilon}{L}\right)^{\frac{1}{2}}\sim F$ the lower bound of Theorem 4.3* already implies the actual one. The reason that it cannot become smaller, even in the limit $F\rightarrow 0$, is our claim (and need) to have unalteredly flat boundaries for any construction. Formally, we may argue that

  \begin{equation*}
  	\tilde{f}\cdot n=F 
  \end{equation*}
\\
must hold on $\Gamma$ so that the compatibility condition

  \begin{equation*}
    \tilde{f}(1-\chi)=0 
  \end{equation*}
\\
implies $\tilde{f}\neq 0$ on $\{0,L\}\times[0,1]^2$ and consequently $\chi=1$ on the boundaries (note that an analogous argument holds for a stress field $\sigma$). Now, in the limit $F\rightarrow 0$, we have $\comp(\O)\rightarrow 0$ and $\vol(\O)\rightarrow 0$. But, due to $\chi=1$ on the boundaries, the perimeter contribution does not vanish and we get

  \begin{equation*}
    \Delta\J\sim\varepsilon \, \per(\O)=\varepsilon\int\big|\nabla\chi\big| \, \d x\gtrsim\varepsilon \, \int_{[0,1]^2}\bigg|\frac{\partial}{\partial z} \, \chi(x,y,\cdot)\bigg| \, \d x \, \text{d}y=\varepsilon \, \int_{[0,1]^2}2 \, \d x \, \text{d}y=2\varepsilon\gtrsim\varepsilon
  \end{equation*}
\\
which precisely is the desired lower bound. \ \ \ \ \ \ \ \ \ \ \ \ \ \ \ \ \ \ \ \ \ \ \ \ \ \ \ \ \ \ \ \ \ \ \ \ \ \ \ \ \ \ \ \ \ \ \ \ \ \ \ \ \ \ \ $\square$

\subsection{Preparatory lemmas}

In this section we give a self-consistent proof of Theorem 4.3* which is slightly different compared to Theorem 4.3 of the superconductivity problem regarding regime and boundary condition. Before starting with this we have to state three lemmas that are proven in \cite{ChCoKo08}. Again we will number everything as it is done there but using our notation that we mainly established in chapter 2.4. \\ Let us begin with Lemma 2.2 (for the meaning of $Q$ have a look at (2.11) and the associated explanations). \\ \\
\textbf{Lemma 2.2.} \textit{Let $\tilde{f}\in L^2((a,b)\times Q;\R^3)$ for some $a,b\in\R$, $a<b$, be such that} $\div \, \tilde{f}=0$. \textit{Then for any $z_0, z_1\in(a,b)$, $z_0<z_1$, one has}

  \begin{equation*}
    \int_Q\big(\tilde{f}_z(\cdot,z_1)-\tilde{f}_z(\cdot,z_0)\big) \, \psi(x_1,x_2) \, \d x_1 \, \d x_2\leq||\nabla\psi||_{L^\infty}\int_{(z_0,z_1)\times Q}\bigg|\frac{\partial\tilde{f}}{\partial z}\bigg| \, \d x
  \end{equation*}
\\
\textit{for any $\psi\in W^{1,\infty}(Q)\subset H^{\frac{1}{2}}(Q)$. If additionally $\tilde{f}-\hat{f}\in L^2(\R\times Q;\R^3)$ for $\hat{f}=F \, e_z$, then we have}

  \begin{equation*}
    \int_Q\tilde{f}_z(\cdot,x_3) \, \d x_1 \, \d x_2=F
  \end{equation*}
\\
\textit{for any $x_3\in\R$.}
\\ \\
Next let us state the two principal lemmas entering the upcoming proof of Theorem 4.3*. \\ Lemma 3.1 says that for a given set $S$ of finite perimeter there exists a set $S_{\ell}$ with two special properties: $S_{\ell}$ is situated in direct neighbourhood of $S$ in the sense that at least half of the volume of $S$ is covered by $S_{\ell}$. Additionally it has the regularity property that some sets $S_{\ell}^r$ of increased thickness have finite volume. The practical meaning of Lemma 3.1 is that under the condition of perimeter penalization the material of any construction cannot be arbitrarily or even equally distributed over the available space. Lemma 3.2 then deepens this insight by showing that the ratio of material and void of any construction is determined by the force $F$ up to some constant. \\ \\
\textbf{Lemma 3.1.} \textit{Let $S\subset Q$ be a set of finite perimeter, and let $\ell>0$ be such that}
	
  \begin{equation}
    \ell \, \per(S)\leq\frac{1}{4}\big| S\big| \ \ \ .
  \end{equation}
\\
\textit{Then there exists an open set $S_{\ell}\in Q$ with the properties} \vspace{0.15cm} \\
(i) $\big| S\cap S_{\ell}\big|\geq\frac{1}{2}\big| S\big|$. 
\vspace{0.15cm} \\
\ \ (ii) \textit{For all $r>0$, the set $S_{\ell}^r\vcentcolon=\big\{p\in Q: \text{dist}(p,S_{\ell})<r\big\}$ satisfies $\big| S_{\ell}^r\big|\leq C\big| S\big|\left(1+\left(\frac{r}{\ell}\right)^2\right)$}. \\ \\
We directly continue with the second lemma. \\ \\
\textbf{Lemma 3.2.} \textit{If} $\div \, \tilde{f}=0$, $\tilde{f}(1-\chi)=0$, $F\in(0,1)$ \textit{and}

  \begin{equation*}
    E(\tilde{f},\chi)\leq\frac{1}{16} \, \min\big\{F,(1-F)^2\big\} \, L
  \end{equation*}
\\
\textit{then}
\vspace{0.15cm} \\
(i) \textit{the function $\chi$ satisfies}
	
  \begin{equation}
	 \int_{0}^L\int_Q(1-\chi) \, \sim \, (1-F) \, L
  \end{equation}
\\
\textit{and}

  \begin{equation}
    \int_{0}^L\int_Q\chi \, \sim \, F \, L \ \ \ ,
  \end{equation}
\\
(ii) \textit{there exists a subset $\mathcal{J}\subset(0,L)$ with $\big|\mathcal{J}\big|\geq\frac{L}{2}$ such that for all $z_0\in\mathcal{J}$} 
	
  \begin{equation}
    \int_{\{z_0\}\times Q}(1-\chi) \, \sim \, 1-F 
  \end{equation}
\\
\textit{and}
 
  \begin{equation}
    \int_{\{z_0\}\times Q}\chi \, \sim \, F
  \end{equation}
\\	
\textit{hold}. \\ \\
}%\notinclude

\subsection{Lower bound for small force}\label{sec:lowerBoundSmallForce}
Here we show that the lower bound in the regime of small forces actually extends to $\varepsilon^{1/2}\lesssim F$
(\cref{thm:minorization,thm:superconductor} only imply that it holds for $\varepsilon^{2/7}\lesssim F$).
In fact, we can simply repeat the proof of \cite[Thm.\,4.3]{ChCoKo08} (the lower bound for the superconductor energy under a small applied field). We only have to modify the last inequalities of that proof, which actually simplify in our setting.
For the sake of completeness we recapitulate the proof below.
We slightly changed its structure to provide the reader with a better intuition of why the proof is actually quite direct.

\begin{proposition}[Lower bound for small force, \protect{\cite[Thm.\,4.3]{ChCoKo08}}]
Let $\varepsilon^{1/2}<F\leq\frac12$. For any $\O\subset\Omega=(0,\ell)^2\times(0,1)$ we have
\begin{equation*}
\J^{\varepsilon,F,\ell}(\O)-\J^{*,F,\ell}_0\gtrsim F^{2/3}\varepsilon^{2/3}\ell^2.
\end{equation*}
\end{proposition}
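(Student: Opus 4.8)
The plan is to transcribe the lower bound proof of \cite[Thm.\,4.3]{ChCoKo08}; the one place it needs changing is that the prescribed traction makes the concluding step of that argument both simpler and valid on a larger range of forces (down to $F>\varepsilon^{1/2}$ rather than merely $F\gtrsim\varepsilon^{2/7}$, where \cref{thm:minorization,thm:superconductor} already apply). Fix $\O\subset\Omega$, set $\Delta:=\J^{\varepsilon,F,\ell}(\O)-\J^{*,F,\ell}_0$, pick a near optimal $\sigma\in\Sigma_\ad^{F,\ell,1}(\O)$ and abbreviate $A_t:=\hd^2(\O_t)$. The third row of $\sigma$ plays the role of the magnetic field in \cite{ChCoKo08}: $\div\sigma=0$ gives $\partial_{x_3}\sigma_{33}=-\partial_{x_1}\sigma_{13}-\partial_{x_2}\sigma_{23}$; $\sigma$ vanishes outside $\O$; and the boundary condition $\sigma n=\hat\sigma n$ forces $\sigma_{13}=\sigma_{23}=0$ and $\sigma_{33}=F$ on $\Gamma_{\mathrm t}\cup\Gamma_{\mathrm b}$. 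Retaining in \eqref{eqn:minCompliance} also the entries $\sigma_{13},\sigma_{23}$ and using $\comp^{F,\ell}(\O)=\inf\int_\Omega|\sigma|^2$ together with $\vol(\O)=\int_0^1A_t\,\d t$, one obtains the clean lower bound
\[
\Delta\ \ge\ \int_\Omega\bigl(\sigma_{13}^2+\sigma_{23}^2\bigr)\,\d x\ +\ \int_0^1\frac{(A_t-F\ell^2)^2}{A_t}\,\d t\ +\ \varepsilon\,\per_\Omega(\O),
\]
a sum of three nonnegative terms; it suffices to bound their sum below by $F^{2/3}\varepsilon^{2/3}\ell^2$.

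If $\Delta$ exceeds a suitable small multiple of $F^2\ell^2$ we are done, since $F^2\geq F^{2/3}\varepsilon^{2/3}$ for $F\geq\varepsilon^{1/2}$; so assume $\Delta$ is smaller than that. Then the middle term controls the cross-sections: from $\tfrac{(A_t-F\ell^2)^2}{A_t}\geq\tfrac{A_t}4$ whenever $A_t>2F\ell^2$ one obtains $\int_J A_t\,\d t\leq2F\ell^2|J|+4\Delta$ for every interval $J\subset(0,1)$ (so the layer where material piles up near the faces is thin), and also $A_t\in[\tfrac12F\ell^2,2F\ell^2]$ off a small set of heights. The perimeter term gives, via the slicing inequality $\int_0^1\per_{\R^2}(\O_t)\,\d t\lesssim\per_\Omega(\O)$, that $\per_{\R^2}(\O_t)\lesssim\Delta/\varepsilon$ off a small set; and from $\int_0^1\int_{\O_t}(\sigma_{33}-1)^2\,\d\hd^2\,\d t=\int_\Omega\sigma_{33}^2\,\d x-2F\ell^2+\vol(\O)\leq\Delta$ one gets $\int_{\O_t}(\sigma_{33}-1)^2\,\d\hd^2\lesssim\Delta$ off a small set. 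Fix a height $z_0$ satisfying all three, let $\Gamma\in\{\Gamma_{\mathrm t},\Gamma_{\mathrm b}\}$ be the face closer to $z_0$ and $J$ the interval of heights between $\Gamma$ and $z_0$, so that $\int_J A_t\,\d t\lesssim F\ell^2$.

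The crux is the clash of two estimates for $\O_{z_0}$. First, for any $(0,\ell)^2$-periodic mean zero $\psi$ with $\|\nabla\psi\|_\infty\leq1$, integrating $\partial_{x_3}\sigma_{33}=-\partial_{x_1}\sigma_{13}-\partial_{x_2}\sigma_{23}$ in $x_3$ across $J$, integrating by parts in $(x_1,x_2)$, and using $\sigma_{33}=F$ on $\Gamma$ so that the boundary term at $\Gamma$ vanishes — this is the one simplification over \cite{ChCoKo08}, where that term must instead be absorbed into the exterior magnetic energy, which is what confines their argument to $F\gtrsim\varepsilon^{2/7}$ — yields
\[
\Bigl|\int_{\O_{z_0}}\sigma_{33}\,\psi\,\d\hd^2\Bigr|\ \leq\ \Bigl(\int_J A_t\,\d t\Bigr)^{1/2}\Bigl(\int_\Omega(\sigma_{13}^2+\sigma_{23}^2)\,\d x\Bigr)^{1/2}\ \lesssim\ (F\ell^2)^{1/2}\Delta^{1/2}.
\]
Second, $\O_{z_0}$ has area $\sim F\ell^2$ but perimeter $\leq\per_{\R^2}(\O_{z_0})$, hence cannot be dispersed too evenly over $(0,\ell)^2$: the covering lemma of \cite{ChCoKo08} (their Lemma~3.1) produces such a $\psi$, localized at the feature scale of $\O_{z_0}$, for which $\bigl|\int_{\O_{z_0}}\psi\bigr|\gtrsim\hd^2(\O_{z_0})^{3/2}\ell/\per_{\R^2}(\O_{z_0})$ — the Wasserstein-$1$ lower bound for sets of prescribed area and perimeter, extremal for a periodic array of balls — with the localization making the error $\bigl|\int_{\O_{z_0}}(\sigma_{33}-1)\psi\bigr|$ (controlled through $\int_{\O_{z_0}}(\sigma_{33}-1)^2\,\d\hd^2\lesssim\Delta$) negligible against it. Comparing the two displays gives $\per_{\R^2}(\O_{z_0})\gtrsim F\ell^3\Delta^{-1/2}$, and together with $\per_{\R^2}(\O_{z_0})\lesssim\Delta/\varepsilon$ this yields $F\ell^3\varepsilon\lesssim\Delta^{3/2}$, i.e.\ $\Delta\gtrsim F^{2/3}\varepsilon^{2/3}\ell^2$, as desired.

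The main obstacle — exactly as in \cite{ChCoKo08} — is the covering/dispersion lemma for the slice $\O_{z_0}$ (a $BV$/Wasserstein statement for sets of small perimeter) and the concomitant control of the unit-stress-deviation error $\int_{\O_{z_0}}(\sigma_{33}-1)\psi$ by a suitably localized test function; a further, purely technical point is the passage from ``off a small set of heights'' to a single admissible $z_0$ and the verification that all constants stay independent of $\ell$, which uses the width hypothesis $\ell^3\geq\varepsilon/\sqrt F$ of \cref{thm:scaling}. The only genuinely new ingredient, the pinning $\sigma_{13}=\sigma_{23}=0$, $\sigma_{33}=F$ on $\Gamma_{\mathrm t}\cup\Gamma_{\mathrm b}$, merely shortens the argument.
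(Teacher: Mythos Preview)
Your proposal is correct and follows essentially the same route as the paper's proof: select a good slice via averaging, invoke \cite[Lem.\,3.1]{ChCoKo08} to build a localized $1$-Lipschitz test function, control the $(\sigma_{33}-1)$ error by Cauchy--Schwarz, and exploit the exact boundary datum $\sigma_{33}=F$ on $\Gamma_{\mathrm t}\cup\Gamma_{\mathrm b}$ to eliminate the exterior-energy term that in \cite{ChCoKo08} restricts the regime to $F\gtrsim\varepsilon^{2/7}$. Two cosmetic differences: the paper argues by contradiction and frames the flux estimate explicitly as a Wasserstein--$1$/Benamou--Brenier bound, whereas you package the covering lemma directly as $|\int_{\O_{z_0}}\psi|\gtrsim A^{3/2}\ell/P$; also, your ``$(0,\ell)^2$-periodic'' is a slip---in our setting the lateral integration-by-parts terms vanish instead because $\sigma n=0$ on the side walls.
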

\begin{proof}
\notinclude{
First note that it suffices to prove the statement for $F\leq\bar c$ with $\bar c>0$ an arbitrary but fixed constant.
The result for $\bar c<F\leq\frac12$ then follows from \cref{thm:minorization,thm:superconductor}
(in the case $\varepsilon^{2/7}<F$ there is nothing left to show,
while in the case $\varepsilon^{2/7}\geq F$ both $\varepsilon$ and $F$ are bounded away from $0$ so that all scalings coincide up to constant factors).
}%\notinclude
One can argue by contradiction, so assume
\begin{equation*}
\J^{\varepsilon,F,\ell}(\O)-\J^{*,F,\ell}_0
<c^*F^{2/3}\varepsilon^{2/3}\ell^2
\end{equation*}
for some $\O$ and some constant $c^*\in(0,\frac12)$ to be specified later.
Furthermore, let $\sigma\in\Sigma_\ad^{F,\ell,L}(\O)$ be such that $\comp^{F,\ell}(\O)=\int_\Omega|\sigma|^2\,\d x-\delta$ with $\delta$ small enough
%$\delta<c^*F^{2/3}\varepsilon^{2/3}\ell^2-(\J^{\varepsilon,F,\ell}(\O)-\J^{*,F,\ell}_0)$
so that also
\begin{equation*}
\Delta\J\vcentcolon=
\int_\Omega|\sigma|^2\,\d x+\vol(\O)+\varepsilon\per_\Omega(\O)-2 F\ell^2
<c^*F^{2/3}\varepsilon^{2/3}\ell^2.
\end{equation*}
For $t\in(0,1)$ we will abbreviate $A_t\vcentcolon=\hd^2(\O_t)$ and $P_t\vcentcolon=\per_{(0,\ell)^2}(\O_t)$.
The argument is performed in three steps.

\emph{Step\,1.}
For a generic cross-section we bound the compliance, volume and perimeter from above
in order to obtain estimates for the material volume and the value of the stress in that cross-section.
For a contradiction assume that there exists $I\subset(0,1)$ with Lebesgue measure at least $\frac12$ such that almost all cross-sections $t\in I$ satisfy
$\int_{\O_t}|\sigma|^2\,\d\hd^2+A_t-2F\ell^2>2c^*F^{2/3}\varepsilon^{2/3}\ell^2$ or $P_t>F^{2/3}\varepsilon^{-1/3}\ell^2$.
By Fubini's theorem we would then have
\begin{multline*}
\int_\Omega|\sigma|^2\,\d x+\vol(\O)+\varepsilon\per_\Omega(\O)-2 F\ell^2\\
%\geq\int_0^1\int_{\Omega_t}|\sigma|^2\,\d x+A_t-2F\ell^2+\varepsilon\per_{(0,\ell)^2}(\O_t)\,\d t\\
\geq\int_I\int_{\O_t}|\sigma|^2\,\d\hd^2+A_t-2F\ell^2+\varepsilon P_t\,\d t
>\int_I2c^*F^{2/3}\varepsilon^{2/3}\ell^2\,\d t
\geq c^*F^{2/3}\varepsilon^{2/3}\ell^2,
\end{multline*}
a contradiction.
Therefore we may assume that for at least half the cross-sections $t\in(0,1)$ we have
\begin{equation*}
\int_{\O_t}|\sigma|^2\,\d\hd^2+A_t-2F\ell^2\leq2c^*F^{2/3}\varepsilon^{2/3}\ell^2
\qquad\text{and}\qquad
P_t\leq F^{2/3}\varepsilon^{-1/3}\ell^2.
\end{equation*}
From now on let $t$ denote such a cross-section.
The above bound directly implies an estimate on the cross-sectional volume.
Indeed, if $A_t>2F\ell^2$ or $A_t<\frac{F\ell^2}2$, then
\begin{equation*}
\int_{\O_t}|\sigma|^2\,\d\hd^2+A_t-2F\ell^2
\geq\frac{F^2\ell^4}{A_t}+A_t-2F\ell^2
\geq\frac{F\ell^2}2
>F^{2/3}\varepsilon^{2/3}\ell^2
>2c^*F^{2/3}\varepsilon^{2/3}\ell^2,
\end{equation*}
another contradiction.
Therefore we have in addition
\begin{equation*}
A_t\geq\frac{F\ell^2}2
\qquad\text{and}\qquad
A_t\leq2F\ell^2.
\end{equation*}
Finally, the above bound also implies that on the cross-section the material stress deviates only little from a vertical tensile unit stress,
but we will postpone the quantification of this fact to step\,3.

\emph{Step\,2.}
Using the previous perimeter and volume bounds we now characterize $\O_t$ as being (mainly) composed of connected components
with typical area $(\frac{A_t}{P_t})^2\sim(F\varepsilon)^{2/3}$ and perimeter (or equivalently diameter) $\frac{A_t}{P_t}\sim(F\varepsilon)^{1/3}$.
This could be done directly by elementary methods, but for later purposes it is a little more convenient to instead first replace $\O_t$ by a slightly nicer set $\tilde\O_t$
and then to express the above characterization in terms of a scaling behaviour for the volume of dilations of $\tilde\O_t$.
To this end abbreviate
\begin{equation*}
l=\frac18(F\varepsilon)^{1/3}
\end{equation*}
to be the typical diameter of the connected components of $\O_t$.
By definition of $l$ and the bounds from step\,1 we have $l\per_{\Omega_t}(\O_t)\leq\frac14\hd^2(\O_t)$.
Now \cite[Lem.\,3.1]{ChCoKo08} says that this condition implies the existence of a set $\tilde\O_t\subset\Omega_t$ with
\begin{equation*}
\hd^2(\O_t\cap\tilde\O_t)\geq\frac{\hd^2(\O_t)}2=\frac{A_t}2\geq\frac{F\ell^2}4
\end{equation*}
and, denoting the dilation of $\tilde\O_t$ by $r>0$ as $\tilde\O_t^r=\{x\in\Omega_t\,|\,\dist(x,\tilde\O_t)<r\}$,
\begin{equation*}
\hd^2(\tilde\O_t^r)\lesssim \hd^2(\O_t)\left(\frac rl\right)^2=A_t\left(\frac rl\right)^2\leq128\frac{F^{1/3}\ell^2r^2}{\varepsilon^{2/3}}
\qquad\text{for all }r>l.
\end{equation*}
This latter condition encodes that the connected components of $\tilde\O_t$ exhibit the behaviour mentioned above.

\emph{Step\,3.}
Finally, one estimates the excess compliance
necessary to distribute the approximate vertical unit stress in each of the connected components of $\O_t$ evenly on the upper and lower boundary of $\Omega$.
This redistribution of the stress can actually be viewed as a transport problem of the vertical momentum:
We interpret the vertical momentum $\rho_s(x_1,x_2)=e_3^T\sigma(x_1,x_2,s)e_3$ as a temporally changing material distribution, where time runs from $s=0$ to $s=t$.
The corresponding temporally changing material flux is then given by the time-dependent vector field $\omega_s(x_1,x_2)=(\sigma_{13}(x_1,x_2,s)\ \sigma_{23}(x_1,x_2,s))^T$ on $(0,\ell)^2$.
Indeed, $\rho$ and $\omega$ satisfy the transport equation
\begin{equation*}
\tfrac\partial{\partial s}\rho_s+\div\omega_s=0
\end{equation*}
in the distributional sense
since for any smooth $\phi:\Omega_t\to\R$ we have
\begin{multline*}
\int_0^t\int_{(0,\ell)^2}\rho_s\,\tfrac\partial{\partial s}\phi+\omega_s\cdot\nabla_{(x_1,x_2)}\phi\,\d(x_1,x_2)\,\d s
=\int_0^t\int_{(0,\ell)^2}e_3^T\sigma\,\nabla_{(x_1,x_2,s)}\phi\,\d(x_1,x_2)\,\d s\\
=e_3^T\int_{\Omega_{(0,t)}}\sigma\,\nabla\phi\,\d x
%=e_3^T\int_{\partial\Omega_{(0,t)}}\phi\,\sigma\,n\,\d\hd^2-e_3^T\int_{\Omega_{(0,t)}}\phi\,\div\sigma\,\d x
=e_3^T\int_{\partial\Omega_{(0,t)}}\phi\,\sigma\,n\,\d\hd^2
=e_3^T\left(\int_{\Omega_t}\phi\,\sigma\,e_3\,\d\hd^2-\int_{\Omega_0}\phi\,\sigma\,e_3\,\d\hd^2\right)\\
=\int_{(0,\ell)^2}\phi(\cdot,\cdot,t)\,\rho_t\,\d\hd^2-\int_{(0,\ell)^2}\phi(\cdot,\cdot,0)\,\rho_0\,\d\hd^2,
\end{multline*}
where we used Stokes' theorem and that $\sigma$ is divergence-free.
Now part of the compliance can be viewed as a cost associated with this transport:
Using \eqref{eqn:minCompliance} and Jensen's inequality we have
\begin{multline*}
%c^*F^{2/3}\varepsilon^{2/3}\ell^2
%\geq
\Delta\J
\geq\int_0^t\int_{\O_s}\sigma_{13}^2+\sigma_{23}^2\,\d\hd^2
+\int_{\O_s}\sigma_{33}^2\,\d\hd^2+A_t-2F\ell^2\,\d s
\geq\int_0^t\int_{\O_s}\sigma_{13}^2+\sigma_{23}^2\,\d\hd^2\,\d s\\
=\int_0^t\!\int_{\pi(\O_s)}|\omega_s|^2\,\d x\,\d s
\geq\frac1{\int_0^t\!A_s\,\d s}\left(\int_0^t\!\int_{\pi(\O_s)}|\omega_s|\,\d x\,\d s\right)^{\!2}
\geq\frac1{2F\ell^2}\left(\int_0^t\!\int_{(0,\ell)^2}|\omega_s|\,\d x\,\d s\right)^{\!2},
\end{multline*}
where $\pi(\O_s)$ stands for the projection of $\O_s$ into the $x_1$-$x_2$-plane.
Now the Wasserstein-1 optimal transport distance between $\rho_0$ and $\rho_t$ in the Benamou--Brenier formulation is \cite[\S\,6.1]{Sa15}
\begin{align*}
W_1(\rho_0,\rho_t)
&=\inf\left\{\int_0^t\int_{(0,\ell)^2}|\omega_s|\,\d x\,\d s\,\middle|\,\tfrac\partial{\partial s}\rho_s+\div\omega_s=0\text{ in the distributional sense}\right\}\\
&=\sup\left\{\int_{(0,\ell)^2}\psi(\rho_t-\rho_0)\,\d x\,\middle|\,\psi:(0,\ell)^2\to\R\text{ is Lipschitz with constant }1\right\},
\end{align*}
where the last equality is known as Kantorovich--Rubinstein duality (see for instance \cite[\S\,4.2.1]{Sa15} for the last formula).
Thus we have $\Delta\J\geq\frac1{2F\ell^2}(W_1(\rho_0,\rho_t))^2$,
and to bound the Wasserstein-1 distance from below we can simply construct a dual variable $\psi$ for the Kantorovich--Rubinstein formula,
which we do in the following.
Since $\rho_t$ is approximately $1$ in the connected components of $\O_t$ (which we still have to show) and those roughly have diameter $l$ and area $l^2$,
the mass of $\rho_t$ in each of these components needs to be spread out to an area $l^2/F$ in order to reach the density $\rho_0=F$.
Thus we expect the typical distance that a particle is transported to be $\sqrt{l^2/F}$.
We therefore abbreviate
\begin{equation*}
r=\frac l{\sqrt{F}}=\frac1{8}F^{-1/6}\varepsilon^{1/3}
\end{equation*}
to be the typical transport distance.
A good $\psi$ then is given by
\begin{equation*}
\psi(x)=\max\left\{r-\dist(x,\pi(\tilde\O_t)),0\right\},
\end{equation*}
which is $r$ on $\tilde\O_t$ and decreases to $0$ linearly with the distance to $\tilde\O_t$.
We thus obtain
\begin{equation*}
\Delta\J
\geq\frac1{2F\ell^2}\left(W_1(\rho_0,\rho_t)\right)^2
\geq\frac1{2F\ell^2}\left(\int_{(0,\ell)^2}\psi(\rho_t-\rho_0)\,\d x\right)^2,
\end{equation*}
and it remains to estimate $\int_{(0,\ell)^2}\psi(\rho_t-\rho_0)\,\d x=\int_{\Omega_t}\sigma_{33}\psi\,\d\hd^2-F\int_{(0,\ell)^2}\psi\,\d x$.
It is the estimate of the first integral that shows that $\sigma$ approximately has a unit vertical component
or at least that $\sigma_{33}$ is more or less greater than or equal to $1$.
Indeed, we calculate
\begin{equation*}
\int_{\Omega_t}\sigma_{33}\psi\,\d\hd^2
=\int_{\Omega_t}(\sigma_{33}-1)\psi\,\d\hd^2+\int_{\Omega_t}\psi\,\d\hd^2,
\end{equation*}
where the summands can be estimated as
\begin{gather*}
\int_{\Omega_t}\psi\,\d\hd^2
\geq r\hd^2(\tilde\O_t)
\geq r\hd^2(\tilde\O_t\cap\O_t)
\geq\frac{rF\ell^2}4,\\
\left\vert\int_{\Omega_t}(\sigma_{33}-1)\psi\,\d\hd^2\right\vert
\leq\left(\int_{\O_t}(\sigma_{33}-1)^2\,\d\hd^2\right)^{\frac12}\left(\int_{\Omega_t}\psi^2\,\d\hd^2\right)^{\frac12}.
\end{gather*}
We bound the two factors via
\begin{gather*}
\int_{\O_t}(\sigma_{33}-1)^2\,\d\hd^2
=\int_{\O_t}\sigma_{33}^2-2\sigma_{33}+1\,\d\hd^2
=\int_{\O_t}\sigma_{33}^2\,\d\hd^2-2F\ell^2+A_t
\leq2c^*F^{2/3}\varepsilon^{2/3}\ell^2,\\
\int_{\Omega_t}\psi^2\,\d\hd^2
\leq r^2\hd^2(\tilde\O_t^r)
\leq C\frac{F^{1/3}\ell^2r^4}{\varepsilon^{2/3}}
\end{gather*}
for some fixed constant $C$ so that in summary we obtain
\begin{gather*}
% \int_{\Omega_t}\sigma_{33}\psi\,\d\hd^2
% \geq\frac{rF\ell^2}4-r^2\sqrt{2c^*CF}\ell^2
% \qquad\text{and}\\
\int_{(0,\ell)^2}\psi(\rho_1-\rho_0)\,\d x
=\int_{\Omega_t}(\sigma_{33}-1)\psi\,\d\hd^2+(1\!-\!F)\!\int_{(0,\ell)^2}\!\psi\,\d x
\geq\tfrac{r(1-F)F\ell^2}4-r^2\sqrt{2c^*CF}\ell^2.
\end{gather*}
By choosing $c^*$ small enough (depending on $C$) and using $\sqrt\varepsilon<F\leq\frac12$ we get $\int_{(0,\ell)^2}\psi(\rho_1-\rho_0)\,\d x\geq\frac{rF\ell^2}{16}$ and thus
\begin{equation*}
\Delta\J
\geq\frac1{2F\ell^2}\left(\frac{rF\ell^2}{16}\right)^2
\geq\frac1{2^{15}}F^{2/3}\varepsilon^{2/3}\ell^2,
\end{equation*}
which is the desired contradiction to our assumption if we choose $c^*<2^{-15}$.
\end{proof}

\notinclude{
\textit{Proof.} Let $(\tilde{f},\chi)$ be an admissible pair and let an upper bound for the energy be given by

  \begin{equation*}
     \Delta\J\vcentcolon=E(\tilde{f},\chi) \, \leq \, c_* \, F^{\frac{2}{3}}\varepsilon^{\frac{2}{3}}L^{\frac{1}{3}}
  \end{equation*}
\\
with a constant $c_*$ specified later. Further we introduce a constant $\bar{c}$ fulfilling 

  \begin{equation}
     F \, \leq \, \bar{c} 
  \end{equation}
\\	
which constitutes an admissible constraint for $F$ according to Theorem 4.1 (see chapter 1.3.3). With these assumptions we get existence of an $z_0\in(0,L)$ so that 

  \begin{equation}
     \int_{\{z_0\}\times Q}\varepsilon\big|\nabla\chi\big| \, +\chi \, (\tilde{f}_z-1)^2 \, \lesssim \, c_* \, F \, \left(\frac{\varepsilon}{L}\right)^{\frac{2}{3}}
  \end{equation}
\\	
holds due to (2.19). In addition, for properly chosen $c_*$ and $\bar{c}$ we obviously may use (3.5) of Lemma 3.2. Combining these ingredients within the framework of Lemma 3.1 we then have

  \begin{equation*}
    \ell \, \per(S) \, \lesssim c_*F \, \lesssim \, c_*| S|
  \end{equation*}
\\
for $S\subset Q$ being the support of $\chi(\cdot,z_0)$ and the choice $\ell=F^{\frac{1}{3}}\varepsilon^{\frac{1}{3}}L^{\frac{2}{3}}$. Consequently, for $c_*\leq\frac{1}{4}$, we may employ Lemma 3.1 to get a set $S_{\ell}\subset Q$ with the property

  \begin{equation}
     \big| S\cap S_{\ell}\big| \, \geq \, \frac{\big| S\big|}{2} \, \gtrsim \, F
  \end{equation}
\\
and a thickened set $S_{\ell}^r$ fulfilling

  \begin{equation}
    \big| S_{\ell}^r\big| \, \lesssim \, \big| S\big| \, \frac{r^2}{\ell^2} \, \sim \, F \, \frac{r^2}{\ell^2}
  \end{equation}
\\
for some $r\geq\ell$ specified below. Now, for $p\in Q$, consider the test function

  \begin{equation}
    \psi(p)\vcentcolon=\max\big\{r-\text{dist}(p,S_{\ell});0\big\}
  \end{equation}
\\
being Lipschitz continuous on $Q$ with $\big|\nabla\psi\big|\leq 1$. The distance function shall be evaluated in $Q$. Deducing from this definition that $\psi=r$ on $S_{\ell}$, $\psi\leq r$ on $Q$ and $\psi=0$ on $Q\backslash S_{\ell}^r$ we see from (3.9) and (3.10) that 

  \begin{equation}
    \int_Q\chi(\cdot,z_0) \, \psi \, \geq \, r\big| S\cap S_{\ell}\big| \, \gtrsim \, F \, r
  \end{equation}
\\
and

  \begin{equation}
    \int_Q\psi^2 \, \leq \, r^2\big| S_{\ell}^r\big| \, \lesssim \, F \, \frac{r^4}{\ell^2} \ \ \ .
  \end{equation}
\\
To get a lower bound for $\int\tilde{f}_z(\cdot,z_0) \, \psi$ we decompose it into

  \begin{equation}
     \int_{\{z_0\}\times Q}\tilde{f}_z \, \psi=\int_{\{z_0\}\times Q}\chi \, \psi \, -\int_{\{z_0\}\times Q}\chi \, (1-\tilde{f}_z) \, \psi
  \end{equation}
\\
and observe that the first term is already covered by (3.12). Choosing $r$ introduced before as 

  \begin{equation}
     r \, \leq \, \ell \, F^{\frac{1}{6}} \left(\frac{L}{\varepsilon}\right)^{\frac{1}{3}}= \, F^{\frac{1}{2}} \, L
  \end{equation}
\\  
we can estimate the second term above by 

  \begin{equation*}
     \begin{aligned}
       \int_{\{z_0\}\times Q}\chi \, (1-\tilde{f}_z) \, \psi \, 
       &\leq \, \left(\int_{\{z_0\}\times Q}\chi \, (\tilde{f}_z-1)^2\right)^{\frac{1}{2}}\left(\int_Q\psi^2\right)^{\frac{1}{2}} \\
       &\lesssim \, c_*^{\frac{1}{2}} \, F^{\frac{5}{6}}\left(\frac{\varepsilon}{L}\right)^{\frac{1}{3}}\frac{r^2}{\ell} \, \leq \, c_*^{\frac{1}{2}}F \, r \\
       &\lesssim \, c_*^{\frac{1}{2}}\int_{\{z_0\}\times Q}\chi \, \psi
     \end{aligned}
  \end{equation*}
\\	  
where we employed Hölder's inequality and inserted (3.8) and (3.13) in the second, (3.15) in the third and (3.12) in the last step. For our claim $c_*\leq\frac{1}{4}$ we accordingly see with (3.12) and (3.14) that 

  \begin{equation}
     \int_{\{z_0\}\times Q}\tilde{f}_z \, \psi \, \gtrsim \, F \, r
  \end{equation}
\\	  
and are ready now to fix a value for $r$ such that the former assumptions $r\geq\ell$ and (3.15) are satisfied. An admissible choice is

  \begin{equation}
     r=\ell\left(\frac{\bar{c}}{F}\right)^{\frac{1}{2}}= \, \bar{c}^{\frac{1}{2}}F^{-\frac{1}{6}}\varepsilon^{\frac{1}{3}}L^{\frac{2}{3}}
  \end{equation}
\\	
because it provides $\frac{r}{\ell}=\left(\frac{\bar{c}}{F}\right)^{\frac{1}{2}}\geq 1$ by (3.7) and $r\leq\bar{c}^{\frac{1}{2}}F^{\frac{1}{2}} \, L$ by (3.6). \\ To specify a lower bound of the estimate $\int_Q(\tilde{f}_z-F) \, \psi$ we first observe that 

  \begin{equation*}
    \int_Q \hat{f} \, \psi \, \lesssim \, F^2\frac{r^3}{\ell^2}=\bar{c} \, F \, r
  \end{equation*}
\\	
by (3.10) and (3.17). Using $\hat{f}=F \, e_z$ together with (3.16) this means that 

  \begin{equation}
     \int_Q(\tilde{f}_z-F) \, \psi \, \gtrsim \, F \, r
  \end{equation}
\\	
holds if $\bar{c}$ is chosen small enough. \\ Now the main difference to the proof of Theorem 4.3 in \cite{ChCoKo08} enters. The crucial point is that we are not interested in a solution of the scalar relaxed version of our problem, but in the full problem. This means that we suppose strict boundary conditions $\hat{f}=F \, e_z$, namely that the traction force $F$ is reached anywhere on $\Gamma$. On the contrary, the superconductivity case is truly a problem of the scalar relaxed type causing, in our notation, the term $\int_Q(\hat{f}-F) \, \psi$ that has to be estimated in addition - it precisely accounts for the deviation from our strict boundary conditions. \\ Consequently, we are almost done with our result (3.18) because it can directly be related to the energy via Lemma 2.2. In more detail, we observe that

  \begin{equation}
     \int_Q\int_0^L\bigg|\frac{\partial\tilde{f}}{\partial z}\bigg| \, \leq \, \left(\int_Q\int_0^L\chi\right)^{\frac{1}{2}}\left(\int_Q\int_0^L\bigg|\frac{\partial\tilde{f}}{\partial z}\bigg|^2\right)^{\frac{1}{2}}
     \lesssim \, F^\frac{1}{2}L^\frac{1}{2}E^\frac{1}{2}   
  \end{equation}
\\	  
where we made use of (3.5) and, a second time, of Hölder's inequality. Applying now Lemma 2.2 to (3.19) and further including (3.18) we find

  \begin{equation*}
     F \, r  \, \lesssim \, \int_Q(\tilde{f}_z-F) \, \psi \, \lesssim \, F^\frac{1}{2}L^\frac{1}{2}E^\frac{1}{2}
  \end{equation*}
\\
from which, after inserting (3.17) for $r$, we deduce

  \begin{equation}
     E \, \gtrsim \, \bar{c} \, F^\frac{2}{3}\varepsilon^\frac{2}{3}L^\frac{1}{3}
  \end{equation}
\\
to be the desired lower bound. \ \ \ \ \ \ \ \ \ \ \ \ \ \ \ \ \ \ \ \ \ \ \ \ \ \ \ \ \ \ \ \ \ \ \ \ \ \ \ \ \ \ \ \ \ \ \ \ \ \ \ \ \ \ \ \ \ \ \ \ \ \ \ \ \ \ \ \ \ \ \ $\square$
}%\notinclude

\section{Upper bounds}

As already explained in the introduction, the upper bounds are obtained by constructions that are composed of layers of elementary cells.
We briefly introduce our corresponding notation.

We will only specify the constructions and compute their energy for the upper half $(0,\ell)^2\times(\frac12,1)$ of $\Omega$
since the construction for the lower half is always mirror-symmetric.
The different layers of elementary cells in that upper half are numbered in ascending order,
beginning with $1$ for the layer sitting on the midplane $\Omega_{\frac12}$.
The index of the last layer will be denoted $n\in\N\cup\{\infty\}$.
Sometimes this last layer will not be composed of elementary cells but will be constructed as a special boundary layer.
The height of the $i$th layer (or equivalently of the elementary cells in that layer) is denoted $l_i$,
and $L_i=l_1+...+l_{i}$ denotes the accumulated height of the first $i$ layers so that the bottom of layer $i+1$ is at height $\frac12+L_i$ and we must have $\frac12+L_n=1$.
The width of the elementary cells in the $i$th layer is denoted $w_i$ and halves from layer to layer,
\begin{equation*}
w_i=2^{1-i}w_1.
\end{equation*}
The characteristic function of the material distribution in the elementary cell of layer $i$ is denoted $\chi_i:(0,w_i)^2\times(0,l_i)\to\{0,1\}$,
the stress field is $\sigma_i:(0,w_i)^2\times(0,l_i)\to\R_{\sym}^{3\times 3}$.
The characteristic function of our construction $\O$ then is given by
  \begin{equation*}
     \chi_\O(x,y,z)=\begin{cases} \chi_i\left(x \bmod w_i,y \bmod w_i,z-\frac{1}{2}-L_{i-1}\right) & \text{if } z-\frac{1}{2}\in(L_{i-1},L_i), \\ \chi_i\left(x \bmod w_i,y \bmod w_i,\frac{1}{2}-z-L_{i-1}\right) & \text{if } \frac{1}{2}-z\in(L_{i-1},L_i), \end{cases}
  \end{equation*}
while the constructed global stress field is given by
  \begin{equation*}
     \sigma(x,y,z)=\begin{cases} \sigma_i\left(x \bmod w_i,y \bmod w_i,z-\frac{1}{2}-L_{i-1}\right) & \text{if } z-\frac{1}{2}\in(L_{i-1},L_i), \\ g\left[\sigma_i\left(x \bmod w_i,y \bmod w_i,\frac{1}{2}-z-L_{i-1}\right)\right] & \text{if } \frac{1}{2}-z\in(L_{i-1},L_i), \end{cases}
  \end{equation*}
where the operator $g$ inverts the sign of the vertical shear stress components,
\begin{equation*}
g:\left(\begin{smallmatrix}
\sigma_{11}&\sigma_{12}&\sigma_{13}\\
\sigma_{12}&\sigma_{22}&\sigma_{23}\\
\sigma_{13}&\sigma_{23}&\sigma_{33}
\end{smallmatrix}\right)
\mapsto\left(\begin{smallmatrix}
\sigma_{11}&\sigma_{12}&-\sigma_{13}\\
\sigma_{12}&\sigma_{22}&-\sigma_{23}\\
-\sigma_{13}&-\sigma_{23}&\sigma_{33}
\end{smallmatrix}\right).
\end{equation*}
All our constructions will ensure $\sigma\in\Sigma_\ad^{F,\ell,L}(\O)$.
Finally, the excess cost of our construction will be abbreviated as
  \begin{equation*}
    \Delta\J
    =\int_\Omega|\sigma|^2\,\d x+\vol(\O)+\varepsilon\per_\Omega(\O)-2F\ell^2
    \geq\J^{\varepsilon,F,\ell}(\O)-\J^{*,F,\ell}_0.
  \end{equation*}
We will only explicitly calculate the perimeter relative to $\Omega$;
changing from $\per_\Omega$ to $\per_{\R^3}$ is trivial for all our constructions.

% Since we often will consider elementary cells being decomposed of $k$ partial geometries we further write
% 
%   \begin{equation*}
%      \chi_i=\chi_{\O_i} \ \ \text{and} \ \ \sigma_i=\sigma_{\O_i} \ \ \ \ \ \text{where} \  \O_i=\O_i^1 \, \cup \, ... \, \cup \, \O_i^k
%   \end{equation*}
% \\
% with $\O_i^j$, $j=1,...,k$, denoting their material distribution. \\
% We will always make explicit use of this type of notation in the following except for the construction for large force (chapter 4.4.1) due to the high complexity of its stress state decomposition. Note in addition that for the construction in chapter 4.3.1 the domain of $\chi$ and $\sigma$ is given by $\big[-\frac{w_i}{2},\frac{w_i}{2}\big]\times[0,l_i]\times[0,\ell]$.

\subsection{Small and extremely small force}

Here we detail the construction from \cref{fig:constructionOverview} left, whose elementary cells consist of struts along the edges of a pyramid.
We employ the same construction for the regimes of small and of extremely small forces.
The different energy scaling for extremely small forces then only comes from the boundary layer dominating the total cost.
We first describe the construction within a single elementary cell and estimate its excess cost contribution.
We then describe the construction of a boundary cell in the boundary layer and estimate its excess cost contribution.
Finally, we describe the assembly of all cells into the full construction and estimate its excess cost.

\paragraph{Elementary cell construction.}
The material distribution within an elementary cell is illustrated in \cref{fig:elementaryCellSmallForce}.
We will just detail the front right quarter of the construction (shown in black), the other quarters being mirror-symmetric.

\begin{figure}
	\centering
	\includegraphics[scale=1]{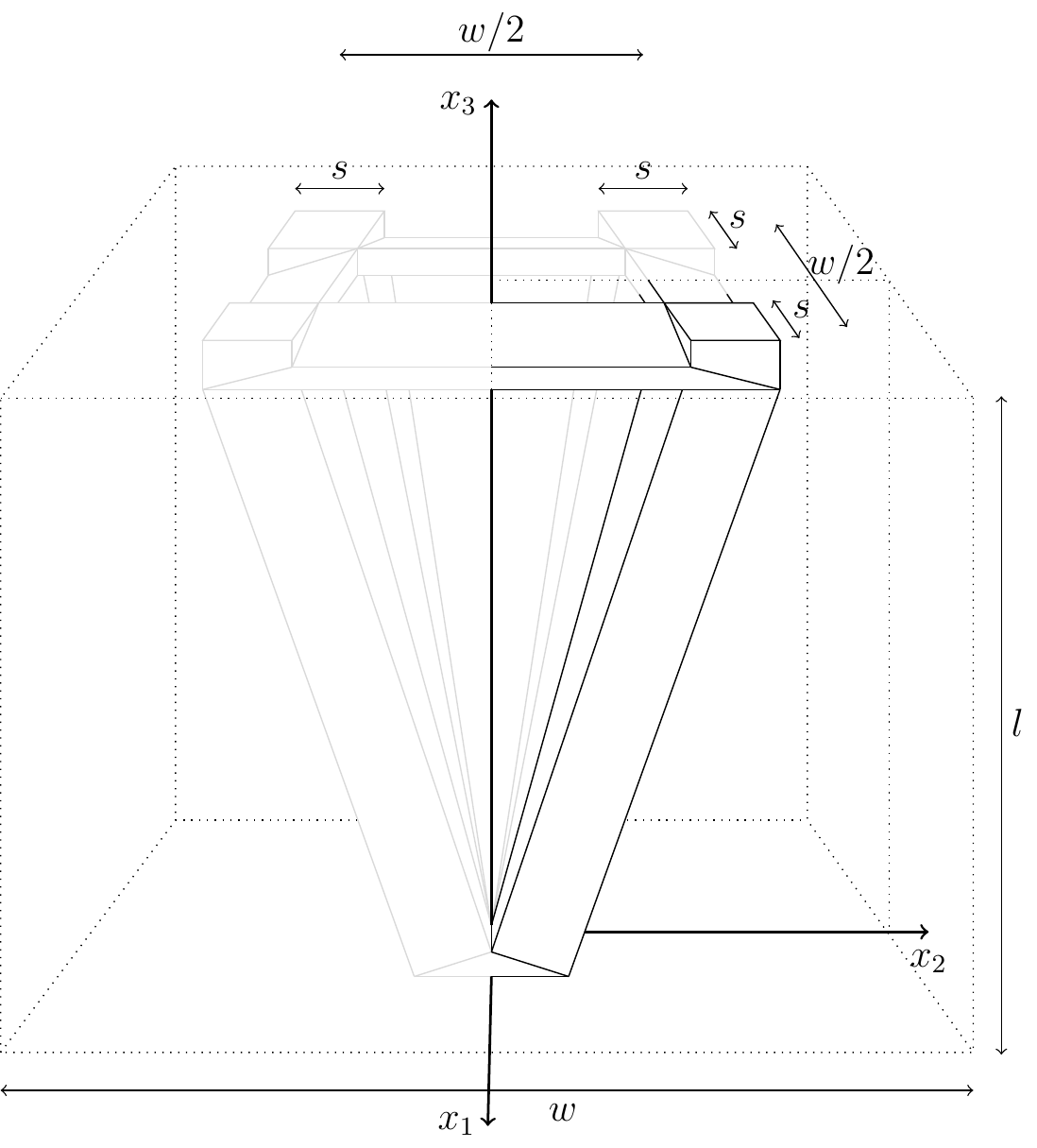}
	\caption{Detailed view of an elementary cell of width $w$ and height $l$.}
	\label{fig:elementaryCellSmallForce}
\end{figure}

Let $\co A$ denote the convex hull of a set $A\subset\R^3$.
We specify the material distribution $\O$ within the front right quarter of the unit cell as the union of (not necessarily disjoint) simple geometric shapes
\begin{align*}
\O^1&=\co\{P_1,\ldots,P_8\},
&\O^7&=\co\{A,\ldots,G\},\\
\O^2&=\co\{P_6,\ldots,P_9\},
&\O^8&=\co\{A,\ldots,D,E,H\},\\
\O^3&=\co\{P_1,P_5,P_6,P_8\},
&\O^9&=\co\{D,F,G,H,I,\ldots,L\},\\
\O^4&=\co\{P_1,P_2,P_5,\ldots,P_8\},
&\O^{10}&=\co\{A,\ldots,D,G,H\},\\
\O^5&=\co\{P_1,P_4,P_5,\ldots,P_8\},
&\O^{11}&=\co\{M,\ldots,P,B,E,F,H\}.\\
\O^6&=\co\{P_1,P_6,P_9,P_8,A,E,F,G\},
\end{align*}
The points $P_1,\ldots,P_9,A,\ldots,P$ are illustrated in \cref{fig:trussBase,fig:trussTop}; we next specify their coordinates.
To this end we place a coordinate system at the bottom centre of the elementary cell as indicated in \cref{fig:elementaryCellSmallForce}
so that the elementary cell of width $w$ and height $l$ (for simplicity we drop the index $i$ indicating the layer) occupies the volume $(-\frac w2,\frac w2)^2\times(0,l)$.
We will assume $w\leq l$ which will be ensured throughout the construction.
We further fix the lengths $a$ and $s$, using their relation shown in \cref{fig:trussBase} left, via
\begin{equation*}
s^2=\frac{Fw^2}{4},
\qquad
a=\sqrt2s\tan\alpha
% \qquad
% b=\frac{\sqrt2s}{\cos\alpha}
\end{equation*}
(they are chosen such that all struts will have unit stress),
where $\alpha$ denotes the angle of the $z$-axis with the upwards pointing trusses.
We do not provide an explicit formula for $\alpha$ and just note that it is implicitly and uniquely determined by the construction as a function of the lengths $w$, $l$, and $s$.
However, using $F\leq\frac12$ it is straightforward to see that
\begin{equation*}
\frac{\sqrt2-1}4\frac{w}{l}
\leq\frac{\sqrt2(w/4-s/2)}l
\leq\tan\alpha
\leq\frac1{\sqrt2}\frac w{l}
\end{equation*}
as long as $w\leq l$.
With this the point coordinates are given as
\begin{equation*}
\arraycolsep=1.4pt
\begin{array}{rl}
P_1&=(s,s,0),\\
P_2&=(0,s,0),\\
P_3&=(0,0,0),\\
P_4&=(s,0,0),\\
P_5&=(s,s,\frac a2),\\
P_6&=(0,s,\frac a2),\\
P_7&=(0,0,\frac a2),\\
P_8&=(s,0,\frac a2),\\
P_9&=(0,0,a),
\end{array}
\qquad
\begin{array}{rl}
A&=(\frac w4-\frac s2,\frac w4-\frac s2,l),\\
B&=(\frac w4+\frac s2,\frac w4-\frac s2,l),\\
C&=(\frac w4+\frac s2,\frac w4+\frac s2,l),\\
D&=(\frac w4-\frac s2,\frac w4+\frac s2,l),\\
E&=(\frac w4+\frac s2,\frac w4-\frac s2,l-\frac a2),\\
F&=(\frac w4+\frac s2,\frac w4+\frac s2,l-a),\\
G&=(\frac w4-\frac s2,\frac w4+\frac s2,l-\frac a2),\\
H&=(\frac w4+\frac s2,\frac w4+\frac s2,l-\frac a2),\\
\ &\
\end{array}
\qquad
\begin{array}{rl}
I&=(\frac w4-\frac s2,0,l),\\
J&=(\frac w4+\frac s2,0,l-\frac a2),\\
K&=(\frac w4+\frac s2,0,l-a),\\
L&=(\frac w4-\frac s2,0,l-\frac a2),\\
M&=(0,\frac w4-\frac s2,l),\\
N&=(0,\frac w4+\frac s2,l-\frac a2),\\
O&=(0,\frac w4+\frac s2,l-a),\\
P&=(0,\frac w4-\frac s2,l-\frac a2).\\
\ &\
\end{array}
\end{equation*}

\begin{figure}
	\centering
    \includegraphics[scale=0.7]{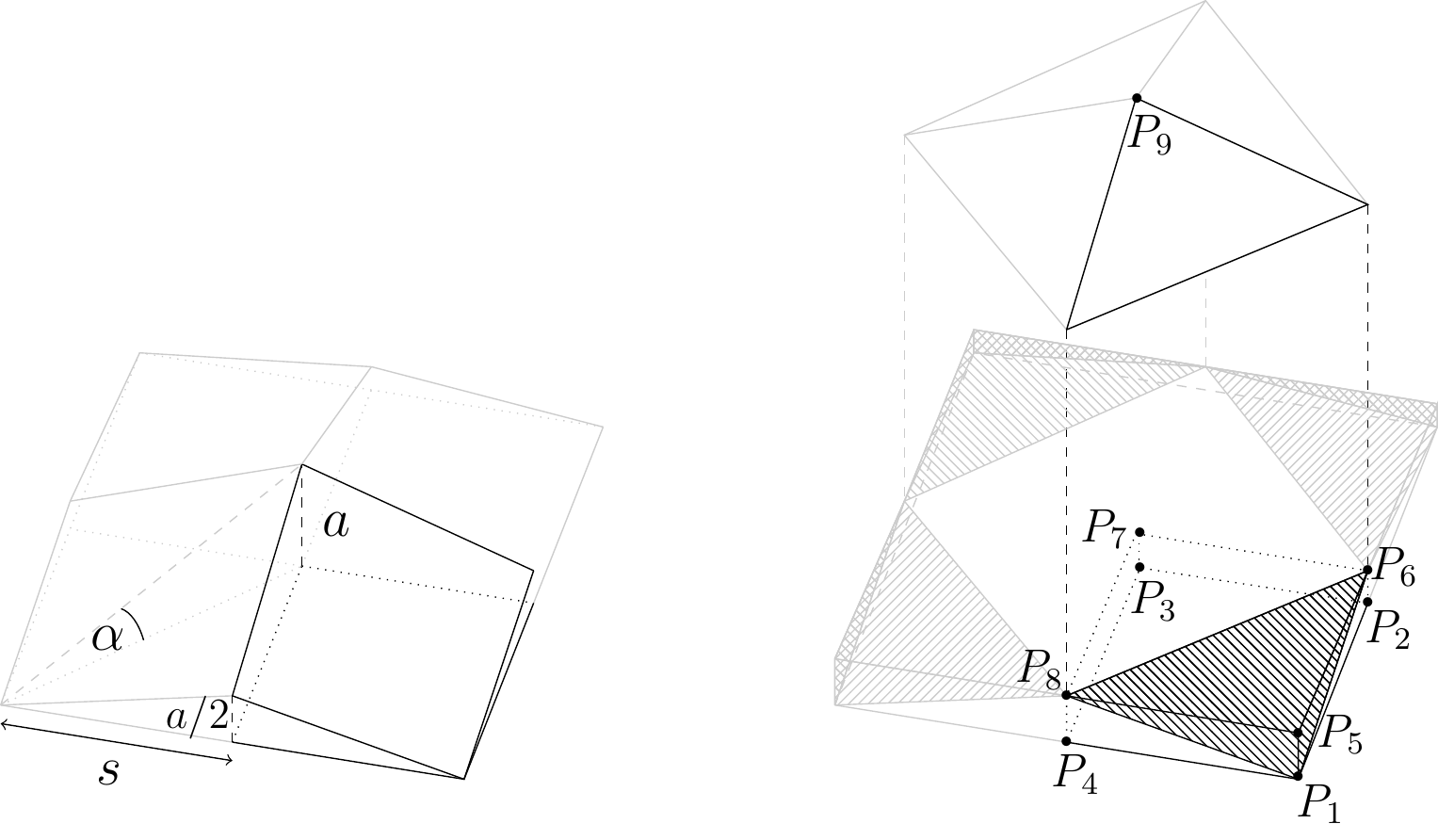} \\
    \includegraphics[scale=0.7]{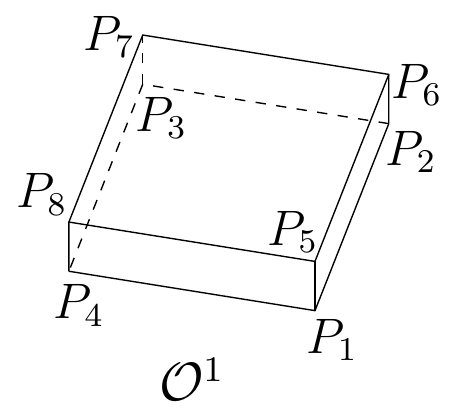}\,%
    \includegraphics[scale=0.7]{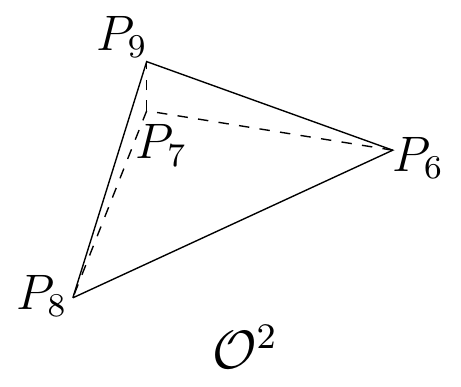}\,%
    \includegraphics[scale=0.7]{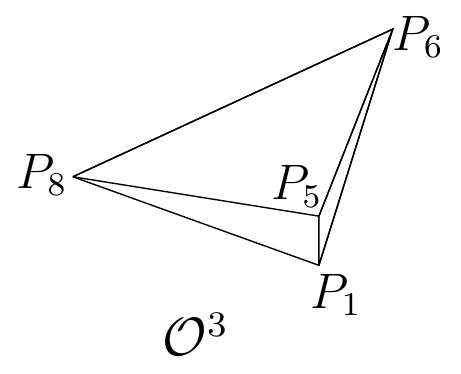}\,%
    \includegraphics[scale=0.7]{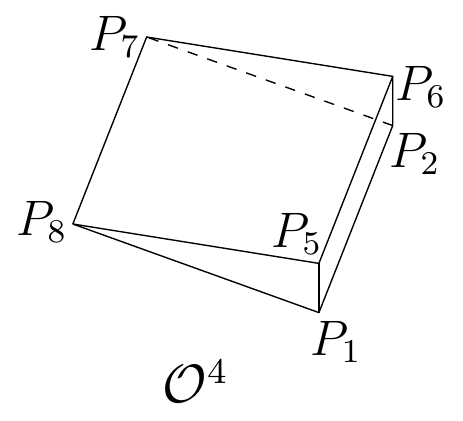}\,%
    \includegraphics[scale=0.7]{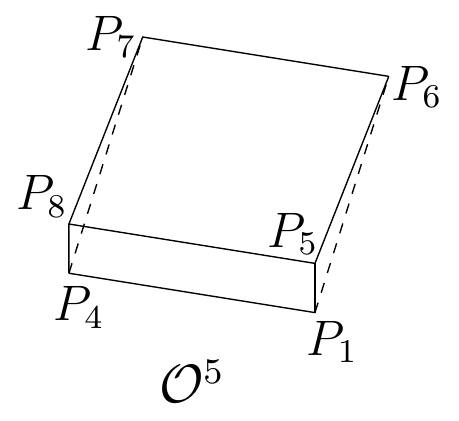}
	\caption{Detailed view of the bottom part of the elementary cell from which the upwards pointing trusses emanate.}
	\label{fig:trussBase}
\end{figure}

\begin{figure}
	\centering
	\includegraphics[scale=0.7]{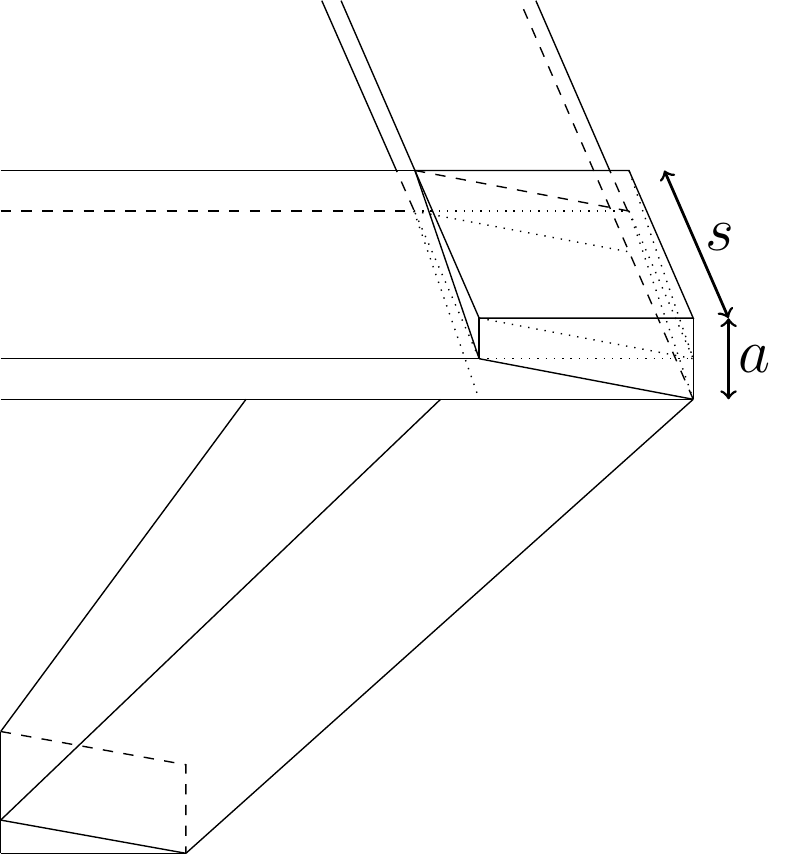} 
	\includegraphics[scale=0.7]{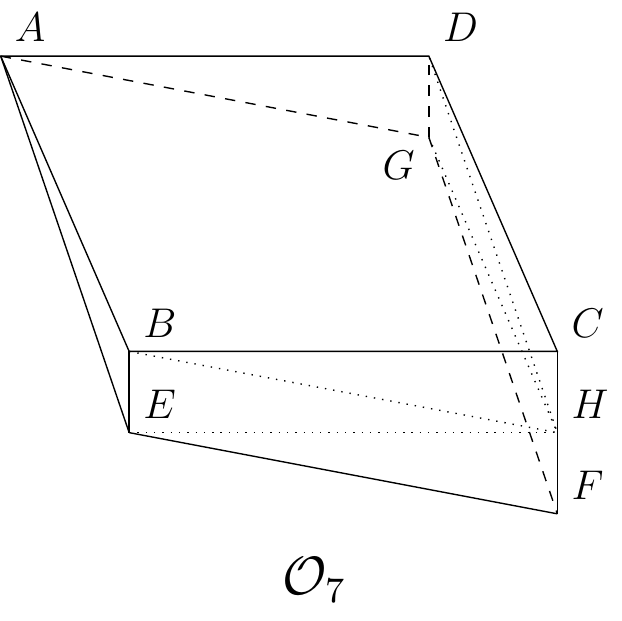} \\
	\includegraphics[scale=0.7]{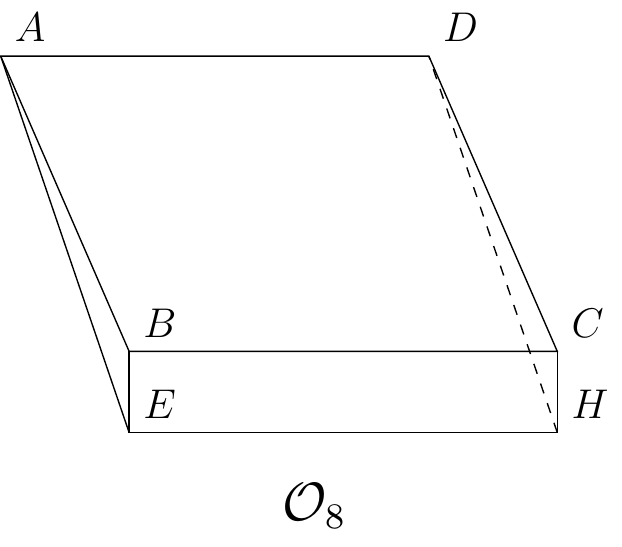}
	\includegraphics[scale=0.7]{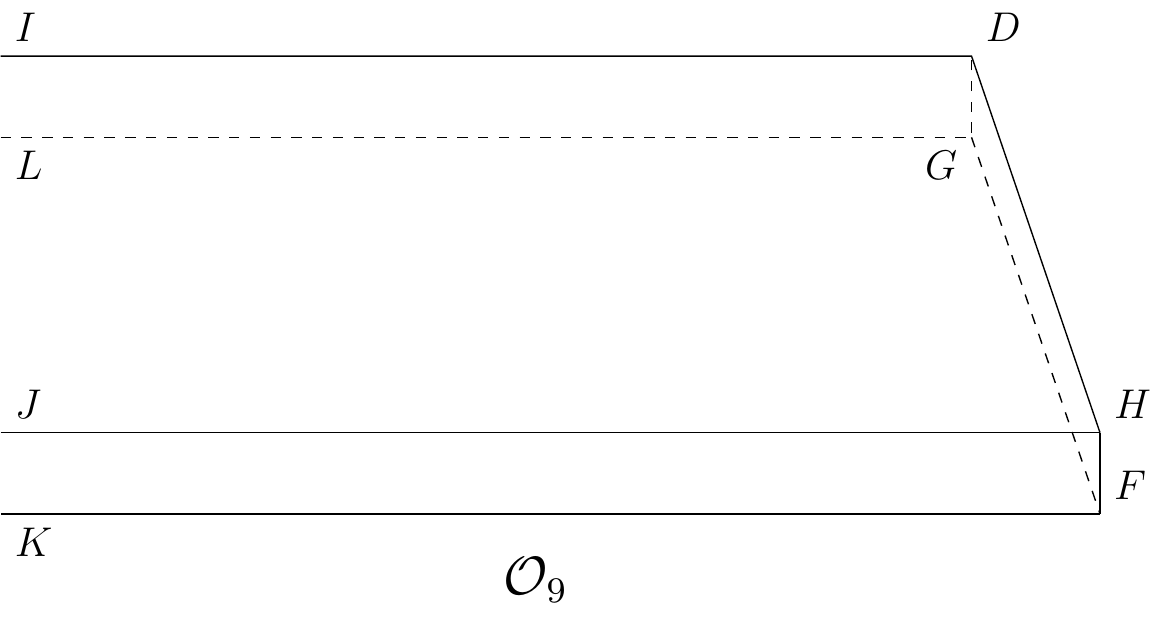} \\
	\includegraphics[scale=0.7]{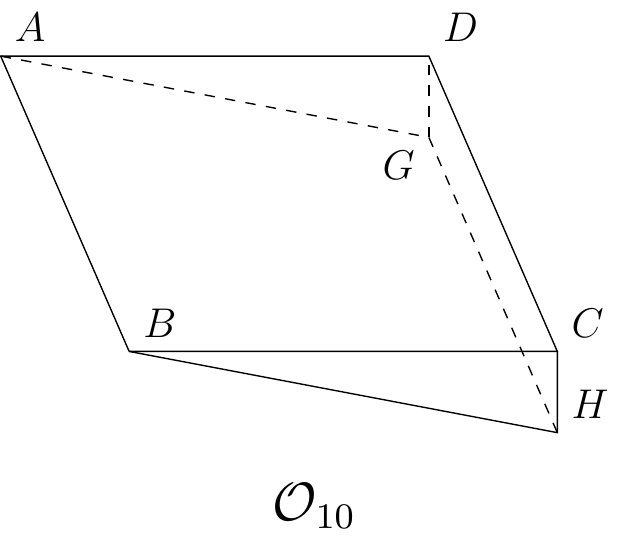}
	\includegraphics[scale=0.7]{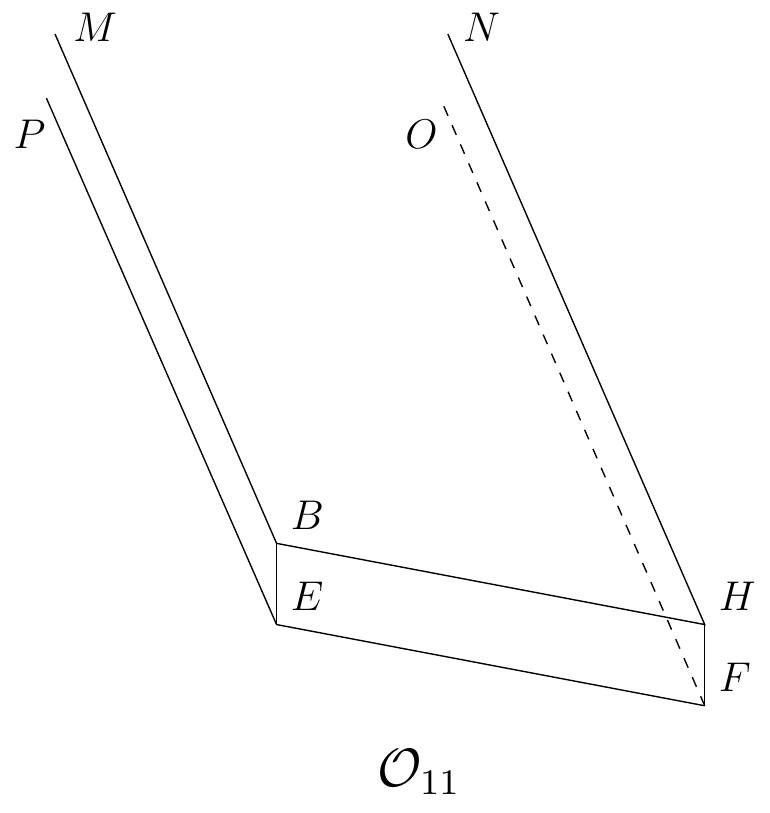}
	\caption{Detailed view of the top part of the elementary cell at which the front right upwards pointing truss ends.
	Occluded edges are dashed, auxiliary lines are dotted.
	%Parallel lines have the same line style.
	}
	\label{fig:trussTop}
\end{figure}

In addition to the material distribution we have to specify an admissible stress field $\sigma$ in the front right quarter of the elementary cell
(the stress field $\sigma'$ in the front left quarter is then obtained as $\sigma'(x_1,x_2,x_3)=\diag(1,-1,1)\sigma(x_1,-x_2,x_3)\diag(1,-1,1)$
for $\diag(r,s,t)$ denoting a diagonal matrix with entries $r,s,t$,
and the stress field $\sigma''$ in the back half is obtained as $\sigma''(x_1,x_2,x_3)=\diag(-1,1,1)[\sigma+\sigma'](-x_1,x_2,x_3)\diag(-1,1,1)$).
Denoting the characteristic function of shape $\O^j$ by $\chi_{\O^j}$, we set
\begin{equation*}
\sigma=\sum_{j=1}^{11}\chi_{\O^j}\sigma^j
\end{equation*}
for constant stresses $\sigma^j$ specified below.
We will abbreviate the identity matrix by $\Id$, the Euclidean standard basis by $\{e_1,e_2,e_3\}$ and the tangent vector to the upwards pointing truss by
\begin{equation*}
\tau=(\tfrac1{\sqrt2}\sin\alpha,\tfrac1{\sqrt2}\sin\alpha,\cos\alpha)^T.
\end{equation*}
We now fix
\begin{equation*}
\arraycolsep=1.4pt
\begin{array}{rl}
\sigma^1&=e_3\otimes e_3,\\
\sigma^2&=\Id,\\
\sigma^3&=-\Id,
\end{array}
\qquad
\begin{array}{rl}
\sigma^4&=e_1\otimes e_1,\\
\sigma^5&=e_2\otimes e_2,\\
\sigma^6&=\tau\otimes\tau,
\end{array}
\qquad
\begin{array}{rl}
\sigma^7&=\Id,\\
\sigma^8&=-e_2\otimes e_2,\\
\sigma^9&=-e_2\otimes e_2,
\end{array}
\qquad
\begin{array}{rl}
\sigma^{10}&=-e_1\otimes e_1,\\
\sigma^{11}&=-e_1\otimes e_1.\\
\ &\
\end{array}
\end{equation*}
Note that the outer product $v\otimes v$ of a vector $v\in\R^3$ with itself represents a uniaxial tensile stress of magnitude $|v|^2$ along the direction $v/|v|$.
By construction, $\sigma=0$ outside the material.
It is furthermore straightforward to check that $\sigma$ is divergence-free throughout the elementary cell.
To this end it suffices to note that on each face of any simple geometry $\O^j$ at most two more simple geometries $\O^k$ are adjacent
and their stresses normal to the face add up to zero.
Finally, on the boundary of the elementary cell there is a unit normal stress on $(-s,s)^2\times\{0\}$
as well as on $(\pm\frac w4-\frac s2,\pm\frac w4+\frac s2)\times(\pm\frac w4-\frac s2,\pm\frac w4+\frac s2)\times\{l\}$.
Since four elementary cells of half the width are stacked on top of each elementary cell, this implies that the stresses between subsequent layers are compatible.

\paragraph{Elementary cell excess cost.}
The volumes of the simple geometries can readily be calculated as
\begin{equation*}
\setcounter{MaxMatrixCols}{11}
\begin{matrix}
\O^1&\O^2&\O^3&\O^4&\O^5&\O^6&\O^7&\O^8&\O^9&\O^{10}&\O^{11}\\
\hline
s^2\tfrac a2&
s^2\tfrac a{12}&
s^2\tfrac a{12}&
s^2\tfrac a{4}&
s^2\tfrac a{4}&
s^2\frac{|F-P_1|}{\cos\alpha}&
s^2\tfrac a2&
s^2\tfrac a{4}&
s\tfrac a2(\frac w4+\frac s2)&
s^2\tfrac a{4}&
s\tfrac a2(\frac w4+\frac s2)
\end{matrix}
\end{equation*}
so that, using $|F-P_1|\leq l/\cos\alpha$, the material volume of the elementary cell can be estimated from above as
\begin{equation*}
\vol_\cell
\leq4\sum_{j=1}^{11}\vol(\O^j)
=4\left[s^2\tfrac l{\cos^2\alpha}+s^2\tfrac{8a}{3}+sw\tfrac a4\right].
\end{equation*}
In the following we will frequently use the estimates
\begin{equation*}
a\leq \frac{sw}l,
\qquad
\frac1{\cos^2\alpha}=1+\tan^2\alpha\leq1+\frac{w^2}{2l^2},
\qquad\text{and}\qquad
\frac1{\cos\alpha}\leq\sqrt{1+\frac{w^2}{2l^2}}\leq1+\frac{w^2}{4l^2}.
\end{equation*}
With those, the volume becomes
\begin{equation*}
\vol_\cell
\leq4s^2l+3s^2\tfrac{w^2}l+\tfrac{32}3s^3\tfrac wl
\leq4s^2l+\tfrac{41}3s^2\tfrac{w^2}l.
\end{equation*}
Likewise, the surface area shared between void and each simple geometry can readily be calculated as
\begin{equation*}
\setcounter{MaxMatrixCols}{11}
\begin{matrix}
\O^1&\O^2,\O^3,\O^4,\O^5&\O^6&\O^7&\O^9,\O^{11}\\
\hline
s\frac a2&
0&
<4|F-P_1|\sqrt{s^2+(\frac a2)^2}&
2sa&
<2\frac{w+s}2(\frac a2+\sqrt{s^2+(\frac a2)^2})
\end{matrix}
\end{equation*}
so that the perimeter contribution from the elementary cell can be estimated from above as
\begin{align*}\textstyle
\per_\cell
&\leq2sa+16|F-P_1|\sqrt{s^2+(\tfrac a2)^2}+4s^2+8sa+8(w+s)(\tfrac a2+\sqrt{s^2+(\tfrac a2)^2})\\
&\leq2sa+16l(1+\tfrac{w^2}{4l^2})s(1+\tfrac{a^2}{8s^2})+4s^2+8sa+8(w+s)(\tfrac a2+s(1+\tfrac{a^2}{8s^2}))\\
% &\leq16ls+14sa+2l\tfrac{a^2}s+4s\tfrac{w^2}l+\tfrac{w^2a^2}{2ls}+12s^2+4wa+8ws+w\tfrac{a^2}s+a^2\\
&\leq16ls+14s^2\tfrac wl+2s\tfrac{w^2}l+4s\tfrac{w^2}l+\tfrac{sw^4}{2l^3}+12s^2+4s\tfrac{w^2}l+8ws+s\tfrac{w^3}{l^2}+\tfrac{s^2w^2}{l^2}\\
&\leq\tfrac{125}2ls
\end{align*}
(where in the last step we exploited $s\leq w\leq l$).
Finally, noting that the stress nowhere exceeds Frobenius norm $\sqrt3$, the squared $L^2$-norm of the stress on the elementary cell (which we abbreviate as $\comp_\cell$) can be estimated via
\begin{multline*}
\comp_\cell
\leq4\left[|\sigma^6|^2\vol(\O^6)+3\vol(\O^1\cup\O^7\cup\O^9\cup\O^{11})\right]\\
\leq4s^2\tfrac l{\cos^2\alpha}+12(\tfrac32s^2a+\tfrac14saw)
% \leq4s^2l+2s^2\tfrac{w^2}l+63saw
\leq4s^2l+23s^2\tfrac{w^2}l
\end{multline*}
Summarizing, the excess cost contribution of an elementary cell can be estimated via
\begin{equation*}
\Delta\J_\cell
=\vol_\cell+\comp_\cell+\varepsilon\per_\cell-2Fw^2l
\leq(\tfrac{41}3+23)s^2\tfrac{w^2}{l}+\varepsilon\tfrac{125}2sl
\leq32(F\tfrac{w^4}{l}+\varepsilon\sqrt Fwl).
\end{equation*}
We here evenly distributed $\J^{*,F,\ell}_0=2F\ell^2$ over the total volume $\Omega$ so that the amount corresponding to the elementary cell is $2Fw^2l$.
We now pick the minimizing elementary cell height which is still no smaller than $w$,
\begin{equation*}
l=\max\{w,F^{\frac14}w^{\frac32}\varepsilon^{-\frac12}\}.
\end{equation*}
(In fact, had we not simplified the excess cost using the assumption $w\leq l$,
we would have arrived at the same choice of $l$ as the minimizer of the non-simplified elementary cell excess cost.)
With this choice we obtain
\begin{equation*}
\Delta\J_\cell
\leq64\max\{F^{\frac34}w^{\frac52}\varepsilon^{\frac12},\sqrt Fw^2\varepsilon\}.
\end{equation*}

\paragraph{Boundary cell construction.}
The last layer has to evenly distribute the stress from the previous layer of elementary cells over the top boundary of $\Omega$.
It will again be partitioned into identical cells of width $w$ and height $l$ which we term `boundary cells' (again we drop the index $n$ indicating the layer).
We again place a coordinate system at the bottom centre of the boundary cell so that it occupies the volume
\begin{equation*}
\omega\vcentcolon=(-\tfrac w2,\tfrac w2)^2\times(0,l).
\end{equation*}
We choose
\begin{equation*}
l=\sqrt3w/2
\end{equation*}
and fill the boundary cell completely with material.
The stress field in the boundary cell is again specified as
\begin{equation*}
\sigma=\sum_{j=12}^{18}\chi_{\O^j}\sigma^j
\end{equation*}
for particular domains $\O^{12},\ldots,\O^{18}\subset \omega$ and stresses $\sigma^{12},\ldots,\sigma^{18}$ (see \cref{fig:boundaryLayer}).
To define these, let us introduce the point $Z=(0,0,-s)$ with $s$ defined via
\begin{equation*}
s^2=Fw^2/4.
\end{equation*}
We abbreviate by $B_Z(r)$ the ball with centre $Z$ and radius $r$
and by $C_{Z,v}(r)$ the infinite cylinder with centre $Z$, axis $v\in\R^3$, and radius $r$.
With this preparation the domains are given as
\begin{equation*}
\arraycolsep=1.4pt
\begin{array}{rl}
\O^{12}&=B_Z(\sqrt3s)\cap[(-s,s)^2\times(0,l)],\\
\O^{13}&=C_{Z,e_1}(\sqrt2s)\cap[(-s,s)^2\times(0,l)],\\
\O^{14}&=C_{Z,e_2}(\sqrt2s)\cap[(-s,s)^2\times(0,l)],\\
\O^{15}&=\{x=Z+tv\in \omega\,|\,|v|=1,\,t\in(\sqrt3s,\frac{\sqrt3}2w),\,Z+\frac{\sqrt3}2wv\in \omega\},\\
\end{array}
\begin{array}{rl}
\O^{16}&=\omega\setminus B_Z(\frac{\sqrt3}2w),\\
\O^{17}&=\omega\setminus C_{Z,e_1}(\frac{\sqrt2}2w),\\
\O^{18}&=\omega\setminus C_{Z,e_2}(\frac{\sqrt2}2w)\\
\ &\
\end{array}
\end{equation*}
and the stresses as
\begin{equation*}
\arraycolsep=1.4pt
\begin{array}{rl}
\sigma^{12}&=\Id,\\
\sigma^{13}&=-e_1\otimes e_1,\\
\sigma^{14}&=-e_2\otimes e_2,\\
\sigma^{15}(x)&=\frac{3s^2}{|x-Z|^2}\frac{(x-Z)\otimes(x-Z)}{|x-Z|^2},\\
\end{array}
\qquad
\begin{array}{rl}
\sigma^{16}&=F\Id,\\
\sigma^{17}&=-Fe_1\otimes e_1,\\
\sigma^{18}&=-Fe_2\otimes e_2.\\
\ &\
\end{array}
\end{equation*}
Again by checking that the normal stresses add up to zero at all domain interfaces (except the bottom face of $\O^{12}$ and the top face of $\O^{16}$)
we obtain that $\sigma$ is divergence-free with normal tensile stress of magnitude $F$ at the top boundary of the boundary cell
and normal tensile stress of magnitude $1$ on $(-s,s)^2\times\{0\}$, where it is attached to the elementary cell underneath, whose stress it exactly balances.

\begin{figure}
	\centering
	\includegraphics[scale=0.6]{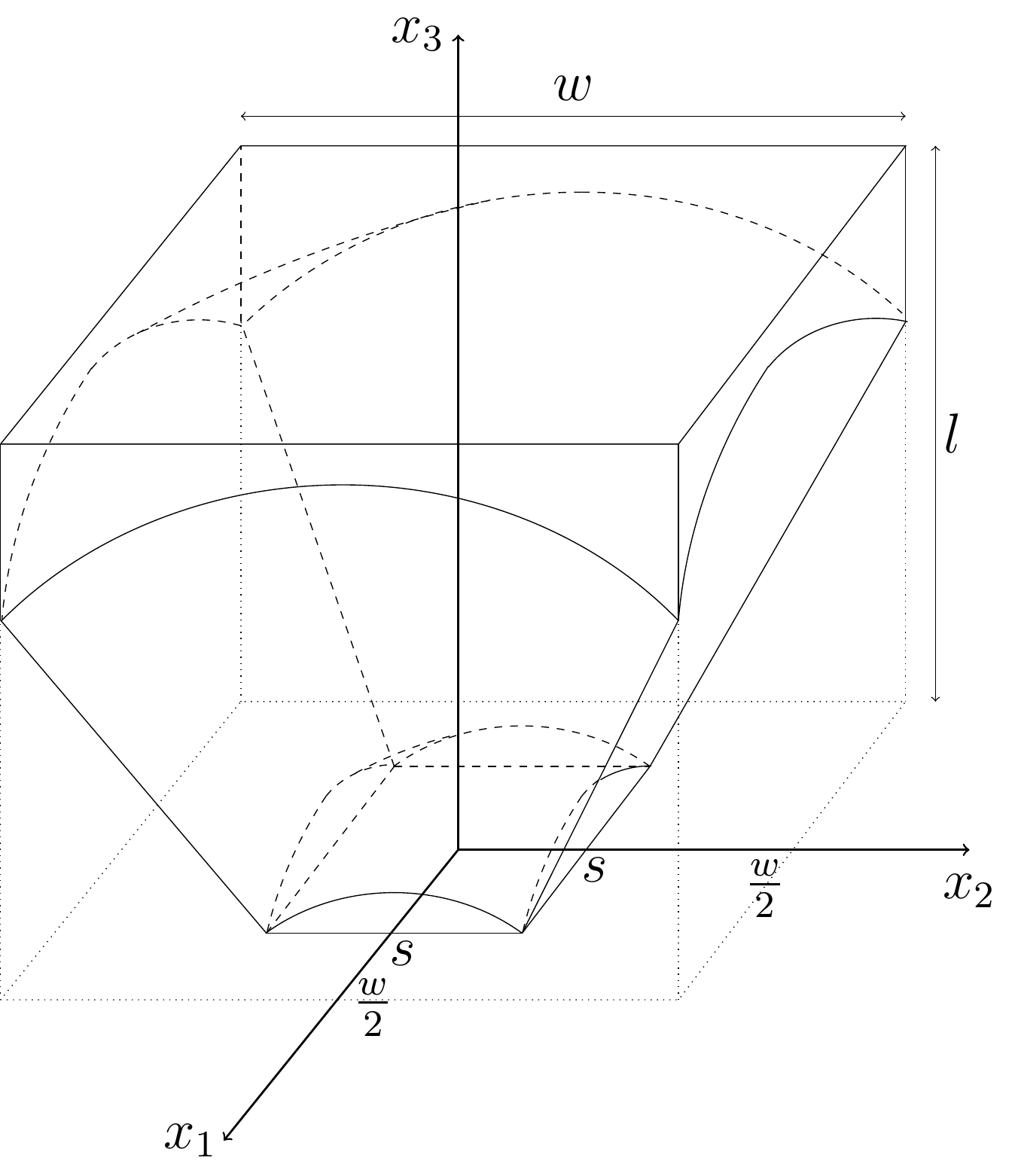} 
    \includegraphics[scale=0.6]{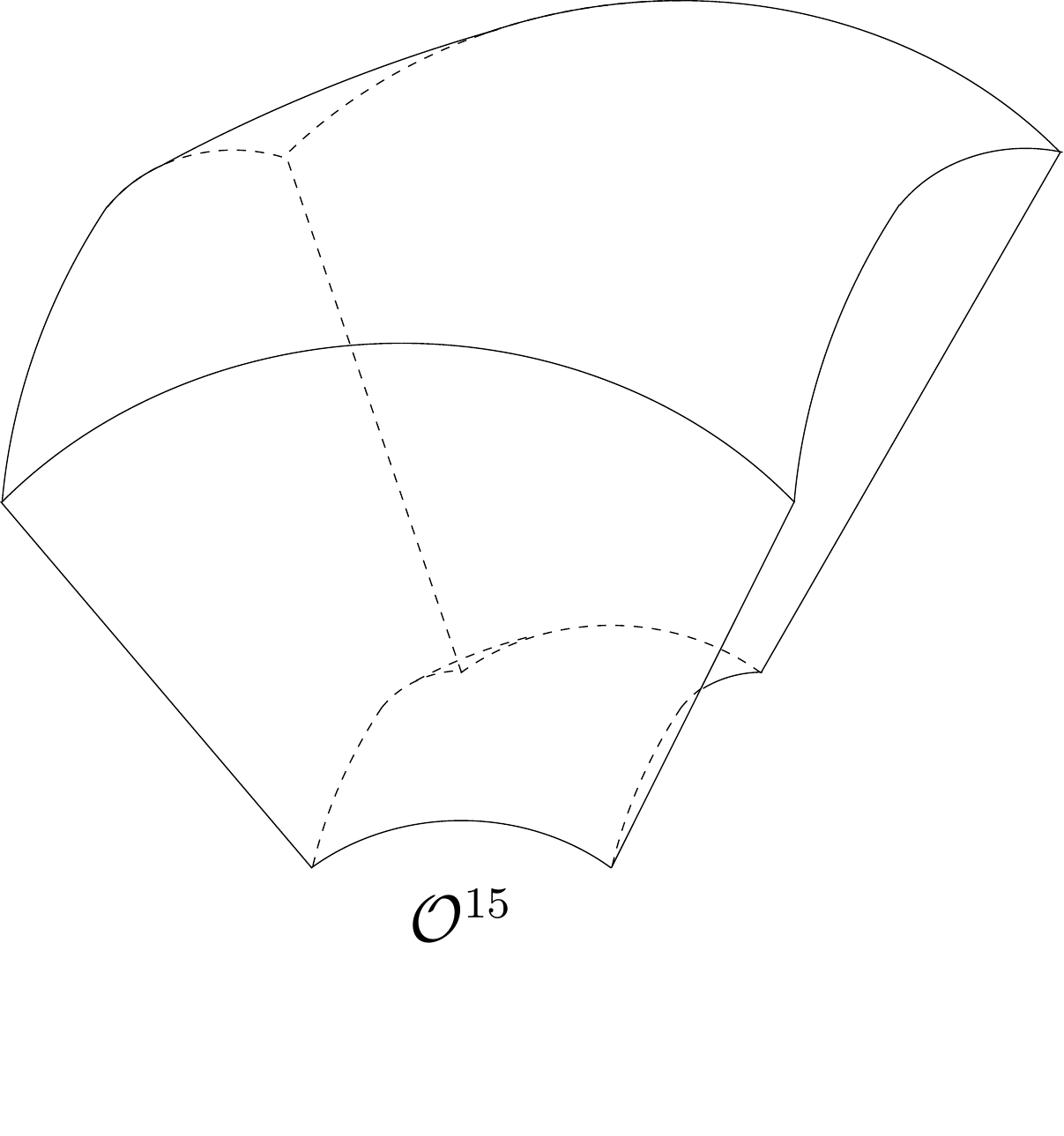} \\ 
	\includegraphics[scale=0.6]{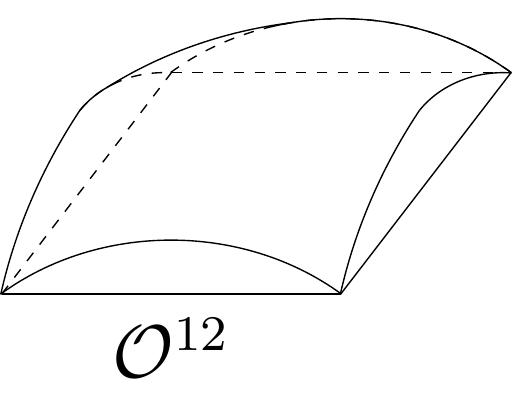}
	\includegraphics[scale=0.6]{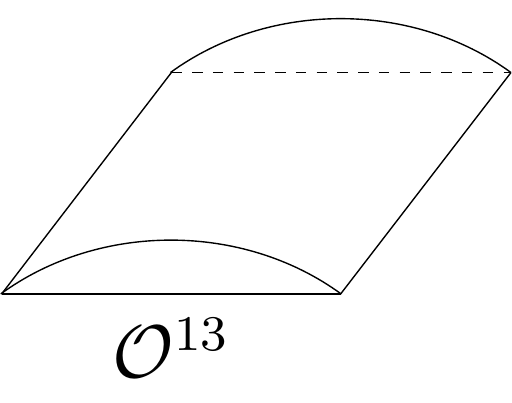}
	\includegraphics[scale=0.6]{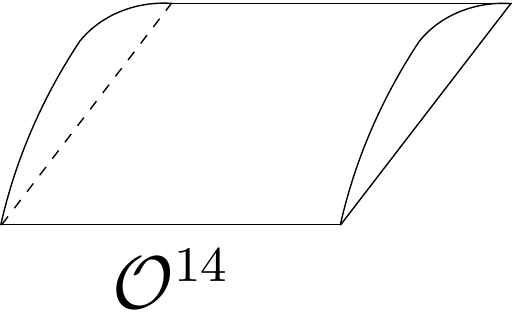}  
	\includegraphics[scale=0.6]{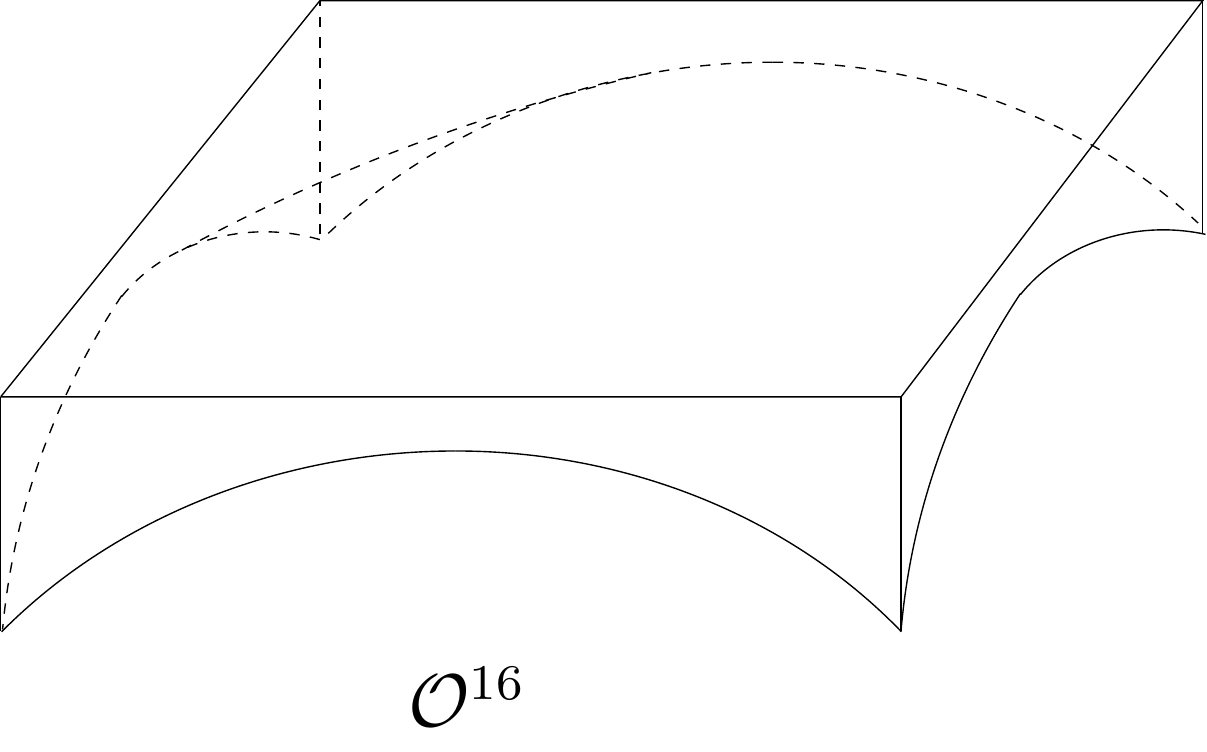}
	\includegraphics[scale=0.6]{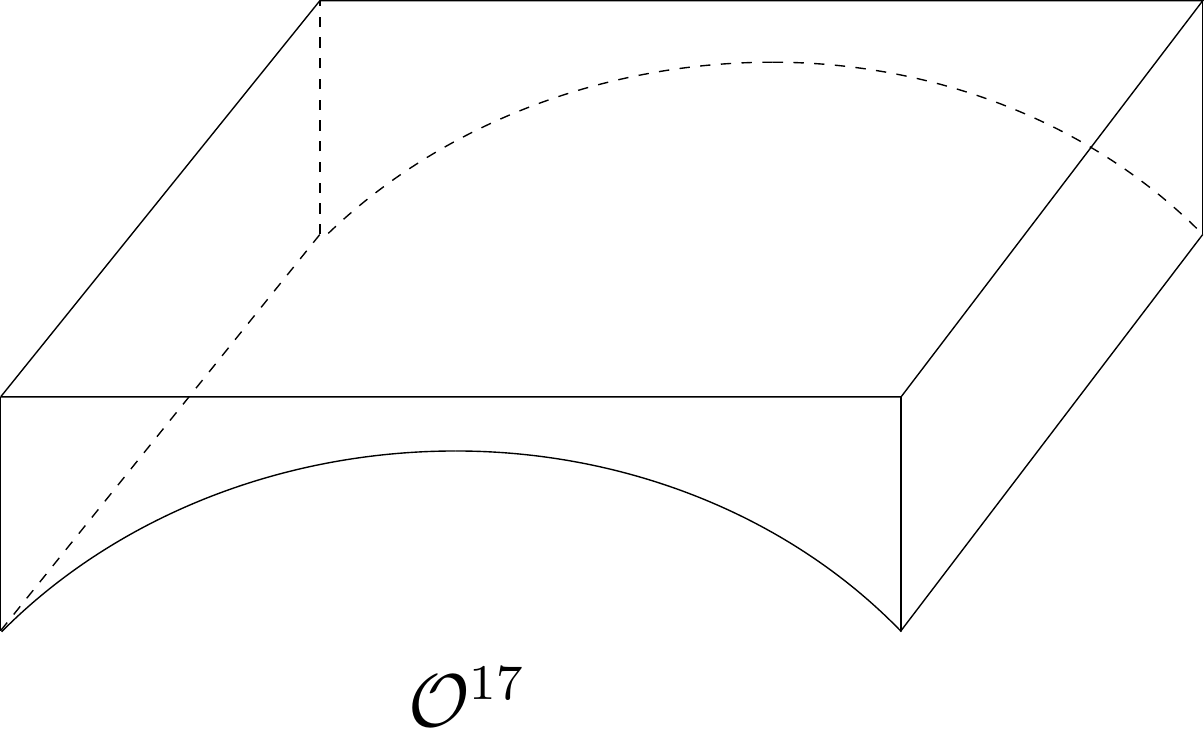}	\includegraphics[scale=0.6]{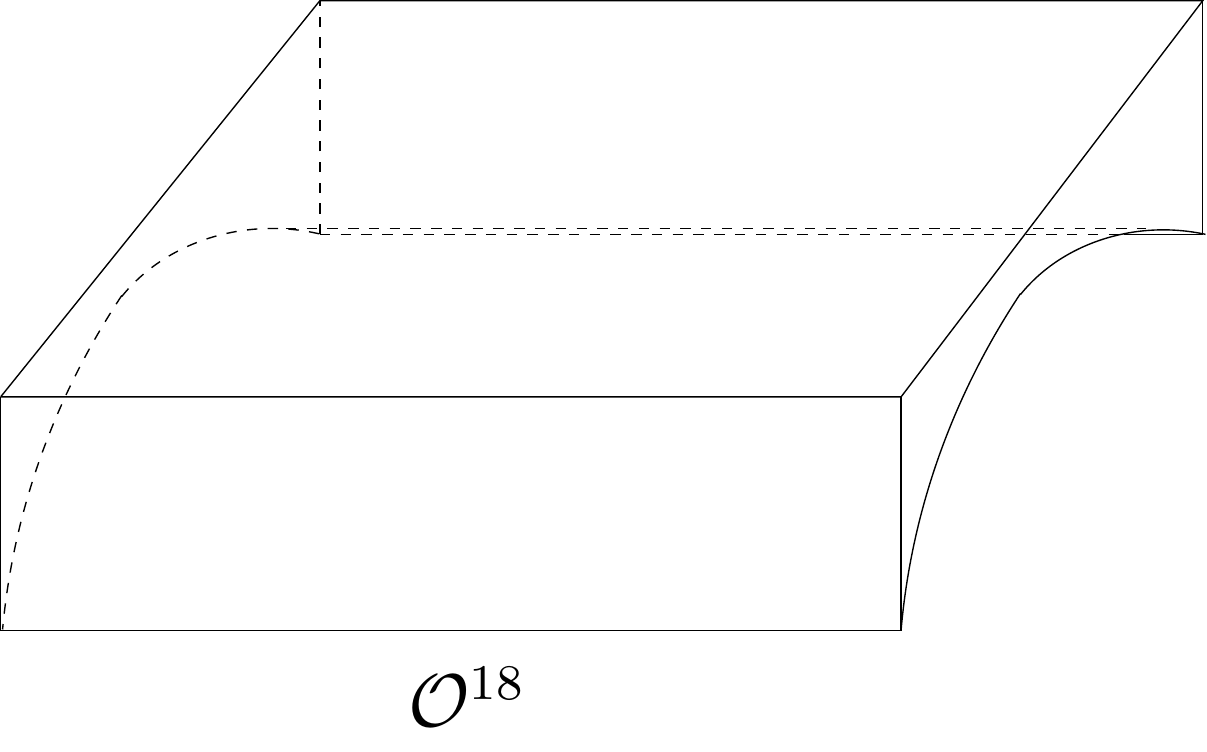}
	\caption{Auxiliary domains for a boundary element.
	%\todo[inline]{The new tangent lines to the spherical surface in the top left as well as in $\O^{12},\O^{15},\O^{16}$ should not be straight, but bent; $\O^{15}$ still misses a dashed such line.}
	}
	\label{fig:boundaryLayer}
\end{figure}

\paragraph{Boundary cell excess cost.}
Since the full boundary cell volume is occupied by material,
the volume and perimeter contribution of the boundary cell can be estimated by
\begin{equation*}
\vol_\cell=lw^2=\frac{\sqrt 3}2w^3,
\qquad
\per_\cell\leq w^2
\end{equation*}
(note that only the bottom face counts to the perimeter; the top face lies in $\partial\Omega$ and is thus not counted,
while the sides are adjacent to the neighbouring cells and thus do not form an interface with the void).
Using that the stress nowhere exceeds Frobenius norm $\sqrt3$, the compliance can be estimated from above as
\begin{equation*}
\comp_\cell\leq3\vol_\cell=\frac{3\sqrt 3}2w^3.
\end{equation*}
Thus, the excess cost contribution of a boundary cell becomes
\begin{equation*}
\Delta\J_\cell
=\comp_\cell+\vol_\cell+\varepsilon\per_\cell-2Fw^2l
\leq2\sqrt3w^3+\varepsilon w^2.
\end{equation*}

\paragraph{Full construction.}
The layers of elementary cells are stacked as previously described, where the last, $n$th layer is composed of boundary cells.
We let index $N$ refer to the last layer whose elementary cells satisfy $w<l$ (thus $w_{j}=l_j$ for $j=N+1,\ldots,n-1$).
For the time being let us assume $N\geq1$.
In principle one could stop layering at index $N$ and directly introduce the boundary layer,
however, it turns out that in that case the optimal energy scaling in the small force regime is only reached for $F\gtrsim\varepsilon^{2/7}$, which is why we will continue adding layers.

Let us now identify the width $w_1$ of the coarsest elementary cells.
Since all layers have to sum up to total height $1$, we have
\begin{multline*}
1
=2\sum_{k=1}^{n}l_k
=2\left[\sum_{k=1}^NF^{\frac14}w_k^{\frac32}\varepsilon^{-\frac12}+\sum_{k=N+1}^{n-1}w_k+\frac{\sqrt 3}2w_n\right]\\
=4\left[F^{\frac14}\varepsilon^{-\frac12}w_1^{\frac32}\sqrt2\sum_{k=1}^N2^{-\frac32k}+w_1\sum_{k=N+1}^{n-1}2^{-k}+\frac{\sqrt 3}22^{-n}w_1\right]
\sim F^{\frac14}\varepsilon^{-\frac12}w_1^{\frac32}+2^{-N}w_1
\end{multline*}
as long as $1<N<n$, where we exploited the geometric series.
Due to $F^{\frac14}\varepsilon^{-\frac12}w_1^{\frac32}=l_1\geq l_{N+1}=2^{-N}w_1$ we arrive at $1\sim F^{\frac14}\varepsilon^{-\frac12}w_1^{\frac32}$.
Hence we pick
\begin{equation*}
w_1\sim F^{-\frac16}\varepsilon^{\frac13}.
\end{equation*}
In more detail, $w_1$ shall be the largest width smaller than $F^{-\frac16}\varepsilon^{\frac13}/8$
such that $\ell$ is an integer multiple of $w_1$.
For this to satisfy $w_1\sim F^{-\frac16}\varepsilon^{\frac13}$ we require $\ell\geq F^{-\frac16}\varepsilon^{\frac13}$.
Note that with this choice of $w_1$ the heights of all layers actually add up to less than one, which is remedied by increasing $l_1$ until the total height exactly equals $1$.
This little change of $l_1$ increases the excess cost $\Delta\J_\cell$ of the coarsest elementary cells at most by a bounded factor.

We next specify the number $n-N$ of layers with $w=l$.
Since the excess cost of the boundary layer at least scales like $\varepsilon$, which is achieved by the choice $w_n\sim\varepsilon$, we set $n$ to be the first index for which
\begin{equation*}
w_n\leq\varepsilon
\end{equation*}
(thus $n=1+\lfloor\log_2\tfrac{2w_1}\varepsilon\rfloor$ for the floor function $\lfloor\cdot\rfloor$).
Furthermore, $N$ is the last index for which $w_N<l_N$ or equivalently
\begin{equation*}
w_N\geq\varepsilon/\sqrt F.
\end{equation*}
Therefore we have
\begin{equation*}\textstyle
n-N
=\log_2\tfrac{w_N}{w_n}
\leq|\log_2\sqrt F|.
\end{equation*}

We are finally in the position to estimate the total excess cost.
Note that in the $i$th layer there are $(\ell/w_i)^2$ many elementary or boundary cells.
Denoting the excess cost of cells in layer $i$ by $\Delta\J_\cell^i$,
the total excess cost thus is
\begin{multline*}
\Delta\J
=2\!\sum_{k=1}^{n}\!\big(\!\tfrac{\ell}{w_k}\!\big)^{\!2\!}\Delta\J_{\cell}^k
\leq2\!\sum_{k=1}^{N}\!\big(\!\tfrac{\ell}{w_k}\!\big)^{\!2\!}64F^{\frac34}w_k^{\frac52}\varepsilon^{\frac12}
+2\hspace{-1.5ex}\sum_{k=N+1}^{n-1}\hspace{-1.5ex}\big(\!\tfrac{\ell}{w_k}\!\big)^{\!2\!}\sqrt Fw_k^{2}\varepsilon
+2\big(\!\tfrac{\ell}{w_n}\!\big)^{\!2\!}(2\sqrt3w_n^3+\varepsilon w_n^{2})\\
\lesssim\ell^2\left[F^{\frac34}\varepsilon^{\frac12}w_1^{\frac12}\sum_{k=1}^{\infty}2^{\frac{1-k}2}
+(n-N)\sqrt F\varepsilon
+\varepsilon\right]
\lesssim\ell^2F^{\frac{2}{3}}\varepsilon^{\frac{2}{3}}+\ell^2\varepsilon.
\end{multline*}
In the regime of small forces the first summand dominates, in the regime of extremely small forces the second one.

So far we had assumed $N\geq1$ or equivalently $w_1<l_1$.
For our above choice of $w_1$ this only holds for $\varepsilon\lesssim\sqrt F$.
If this is violated, we instead have $N=0$, and the calculation above yields $\Delta\J\lesssim\ell^2[(n-N)\sqrt F\varepsilon+\varepsilon]$.
Since again $n-N\leq|\log_2\sqrt F|$, the scaling is not impaired.

\notinclude{
We are going to work with the construction shown in Fig. 4.1. Within this architecture we differentiate between tubes, trusses and several transition geometries and proceed in two steps. The first one is to find a subdivision into domains of either isotropic or uniaxial stress and a boundary layer so that the construction is guaranteed to be statically admissible. The second one consists of finding an upper bound for the excess energy $\Delta \J$. \\ The geometric variables of our construction can be found in Fig. 4.1 and Fig. 4.2a.: We consider the $E_3$ coordinate system to be fixed and are working with a elementary cell of width $w$ and height $l$ (for the reason of readability we omit the index $i$ in this chapter). Considering everything under the condition $w\leq l$ turns out to be sufficient. The angle $\alpha$ denotes the slant of the rhomboid tubes with respect to the $z$-axis. The basis of the tubes shown in Fig. 4.2a. is called `transition geometry' as well as its complement, namely the upper edges of the construction, so that a cuboid could be built by merging all the constituents. Accordingly, the variables $a$ and $b$ and $s$ also occur at those edges, e.\,g.\ the vertical exterior surfaces of the trusses have height $\frac{a}{2}$. The variable $s$ is defined as 

  \begin{equation*}
    s^2\vcentcolon=\frac{Fw^2}{4}
  \end{equation*}
\\
taking into account that a normal load $Fw^2$ acts on the elementary cell. By this choice of $s$ we have uniaxial stress of magnitude 1 in all tubes and trusses. In the next chapter we will make this reasonable by providing an appropriate stress state $\sigma_{\text{cell}}$ for the elementary cell. 

\subsubsection{Stress state decomposition for the elementary cell}

We have to show that the construction in Fig. 4.1 is statically admissible. In this case we will argue  in particular that it can be decomposed into a sum of uniaxial and isotropic domains of stress. The desired stress state can be written as

  \begin{equation}
     \sigma_{\text{cell}}=\sum_{j=1}^{8}\chi_{\O^j} \, \sigma_{\O^j}^{}+\sum_{j=3}^{8}\chi_{\O^j}^{'} \, \sigma_{\O^j}^{'}+\sum_{j=3}^{8}\chi_{\O^j}^{''} \, \sigma_{\O^j}^{''}+\sum_{j=3}^{8}\chi_{\O^j}^{'''} \, \sigma_{\O^j}^{'''}
  \end{equation}
\\ 
where the first sum considers the anterior right quarter of the construction as shown in Fig. 4.3a. - the other ones anti-clockwise apply to the remaining three quarters. We will make use of a simplifying notation: Having $\sigma_{\O}^{(','',''')}=\text{right-hand side}$ means that $\sigma_{\O}=\sigma_{\O}^{'}=\sigma_{\O}^{''}=\sigma_{\O}^{'''}=\text{right-hand side}$, instead $\sigma_{\O}^{(','',''')}=\text{right-hand side}^{(','',''')}$ means that $\sigma_{\O}^{}=\text{right-hand side}$, $\sigma_{\O}^{'}=\text{right-hand side}^{'}$ and so on. Moreover, by $\sigma_{\O}^{\{','''\}}$ we abbreviate $\sigma_{\O}^{'}=\sigma_{\O}^{'''}$. As already mentioned we define the normal load $Fw^2$ as a tension, i.e. $F>0$, causing a minus sign for all compressive components of $\sigma_{\O^j}$. This convention, however, can easily be inverted by changing the sign of all $\sigma_{\O^j}$ that are given in the following. To be able to give explicit characteristic functions too we begin with the statement of two sets of points, $\mathcal{P}_1$ and $\mathcal{P}_2$. The first one consists of the points 

  \begin{equation*}
    \begin{aligned}
     &P_1=\left(s,-s,0\right) \, , \, P_2=\left(s,s,0\right) \, , \, P_3=\left(-s,s,0\right) \, , \, P_4=\left(-s,-s,0\right) \, , \,  P_5=\left(s,-s,\frac{s \, \text{tan} \, \alpha}{\sqrt{2}}\right) \, , \\ &P_6=\left(s,s,\frac{s \, \text{tan} \, \alpha}{\sqrt{2}}\right) \, , \, P_7=\left(-s,s,\frac{s \, \text{tan} \, \alpha}{\sqrt{2}}\right) \, , \, P_8=\left(-s,-s,\frac{s \, \text{tan} \, \alpha}{\sqrt{2}}\right) \, , \,  P_9=\left(0,s,\frac{s \, \text{tan} \, \alpha}{\sqrt{2}}\right) \, , \\ &P_{10}=\left(-s,0,\frac{s \, \text{tan} \, \alpha}{\sqrt{2}}\right) \, , \, P_{11}=\left(0,-s,\frac{s \, \text{tan} \, \alpha}{\sqrt{2}}\right)\, , \, P_{12}=\left(s,0,\frac{s \, \text{tan} \, \alpha}{\sqrt{2}}\right) \, , \,  P_{13}=\left(0,0,\sqrt{2}s \, \text{tan}\alpha\right)
    \end{aligned}
  \end{equation*}
\\
and is associated with the basis of the cell (see Fig. 4.2b.). The second one is built by points $A$ to $H$ that explicitly read (with $l-\frac{s \, \text{tan} \, \alpha}{\sqrt{2}}\vcentcolon=\zeta$ \, and \, $l-\sqrt{2}s \, \text{tan} \, \alpha\vcentcolon=\eta$)

  \begin{equation*}
    \begin{aligned}
      &A=\left(\frac{w}{4}-\frac{s}{2},-\frac{w}{4}+\frac{s}{2},l\right) \, , \, B=\left(\frac{w}{4}-\frac{s}{2},-\frac{w}{4}-\frac{s}{2},l\right) \, , \, C=\left(\frac{w}{4}+\frac{s}{2},-\frac{w}{4}-\frac{s}{2},l\right) \, , \\ &D=\left(\frac{w}{4}+\frac{s}{2},-\frac{w}{4}+\frac{s}{2},l\right) \, , \, E=\left(\frac{w}{4}-\frac{s}{2},-\frac{w}{4}-\frac{s}{2},\zeta\right) \, , \, F=\left(\frac{w}{4}+\frac{s}{2},-\frac{w}{4}-\frac{s}{2},\eta\right) \, , \\
      & \ \ \ \ \ \ \ \ \ \ \ \ \ \ \ \ \
      G=\left(\frac{w}{4}+\frac{s}{2},-\frac{w}{4}+\frac{s}{2},\zeta\right) \, , \, H=\left(\frac{w}{4}+\frac{s}{2},-\frac{w}{4}-\frac{s}{2},\zeta\right)
    \end{aligned}
  \end{equation*}
\\
and define the upper transition geometry $\O^4$ (see Fig. 4.3b.). The geometries will be specified by sets of points $\big(x,y,z\big)$ being part of a cuboid (CB), a pyramid (PY), a parallelepided (PE) or a triangular prism (TP). These special geometries will be defined by their vertices in the sense 'geometry\,=\,set of vertices'. \\ To express the stress states we will naturally make use of the normalized axial vectors

  \begin{equation*}
      n_x=\left(\begin{array}{c} 1 \\ 0 \\ 0 \end{array}\right) \ , \ \ \ n_y=\left(\begin{array}{c} 0 \\ 1 \\ 0 \end{array}\right) \ , \ \ \ n_z=\left(\begin{array}{c} 0 \\ 0 \\ 1  \end{array}\right)
  \end{equation*}
\\
concerning trusses and transition geometry and of 

  \begin{equation*}
     n_{xyz}=\left(\begin{array}{c} \frac{1}{\sqrt{2}} \, \text{sin} \, \alpha \\ -\frac{1}{\sqrt{2}} \, \text{sin} \, \alpha \\ \text{cos} \, \alpha  \end{array}\right) \, , \, n_{xyz}^{'}=\left(\begin{array}{c} \frac{1}{\sqrt{2}} \, \text{sin} \, \alpha \\ \frac{1}{\sqrt{2}} \, \text{sin} \, \alpha \\ \text{cos} \, \alpha  \end{array}\right) \, , \, n_{xyz}^{''}=\left(\begin{array}{c} -\frac{1}{\sqrt{2}} \, \text{sin} \, \alpha \\ \frac{1}{\sqrt{2}} \, \text{sin} \, \alpha \\ \text{cos} \, \alpha  \end{array}\right) \, , \, n_{xyz}^{'''}=\left(\begin{array}{c} -\frac{1}{\sqrt{2}} \, \text{sin} \, \alpha \\ -\frac{1}{\sqrt{2}} \, \text{sin} \, \alpha \\ \text{cos} \, \alpha  \end{array}\right) 
  \end{equation*}
\\
for the tubes. For the reason of readability we employ the outer product defined as

  \begin{equation*}
      v\otimes w=v\cdot w^{T}=\left(\begin{array}{c} v_x \\ v_y \\ v_z \end{array}\right)\cdot \left(w_x \ w_y \ w_z \right)=\left(\begin{array}{ccc} v_xw_x & v_xw_y & v_xw_z \\ v_yw_x & v_yw_y & v_yw_z \\ v_zw_x & v_zw_y & v_zw_z \end{array}\right)
  \end{equation*}
\\
for two vectors $v,w\in E_3$. 
\begin{table}[ht]	
	\begin{tabular}[centering]{c|c|c|c} & quarter back right ($\mathcal{P}_2^{'}$) & quarter back left ($\mathcal{P}_2^{''}$) & quarter front left ($\mathcal{P}_2^{'''}$) \\ \hline $A$ &  $\bigg(\frac{w}{4}-\frac{s}{2},\frac{w}{4}-\frac{s}{2},l\bigg)$ & $\bigg(-\frac{w}{4}+\frac{s}{2},\frac{w}{4}-\frac{s}{2},l\bigg)$ & $\bigg(-\frac{w}{4}+\frac{s}{2},-\frac{w}{4}+\frac{s}{2},l\bigg)$ \\ \hline $B$ & $\bigg(\frac{w}{4}+\frac{s}{2},\frac{w}{4}-\frac{s}{2},l\bigg)$ & $\bigg(-\frac{w}{4}+\frac{s}{2},\frac{w}{4}+\frac{s}{2},l\bigg)$ & $\bigg(-\frac{w}{4}-\frac{s}{2},-\frac{w}{4}+\frac{s}{2},l\bigg)$ \\ \hline $C$ & $\bigg(\frac{w}{4}+\frac{s}{2},\frac{w}{4}+\frac{s}{2},l\bigg)$ & $\bigg(-\frac{w}{4}-\frac{s}{2},\frac{w}{4}+\frac{s}{2},l\bigg)$ & $\bigg(-\frac{w}{4}-\frac{s}{2},-\frac{w}{4}-\frac{s}{2},l\bigg)$ \\ \hline $D$ & $\bigg(\frac{w}{4}-\frac{s}{2},\frac{w}{4}+\frac{s}{2},l\bigg)$ & $\bigg(-\frac{w}{4}-\frac{s}{2},\frac{w}{4}-\frac{s}{2},l\bigg)$ & $\bigg(-\frac{w}{4}+\frac{s}{2},-\frac{w}{4}-\frac{s}{2},l\bigg)$ \\ \hline $E$ & $\bigg(\frac{w}{4}+\frac{s}{2},\frac{w}{4}-\frac{s}{2},\zeta\bigg)$ & $\bigg(-\frac{w}{4}+\frac{s}{2},\frac{w}{4}+\frac{s}{2},\zeta\bigg)$ & $\bigg(-\frac{w}{4}-\frac{s}{2},-\frac{w}{4}+\frac{s}{2},\zeta\bigg)$ \\ \hline $F$ & $\bigg(\frac{w}{4}+\frac{s}{2},\frac{w}{4}+\frac{s}{2},\eta\bigg)$ & $\bigg(-\frac{w}{4}-\frac{s}{2},\frac{w}{4}+\frac{s}{2},\eta\bigg)$ & $\bigg(-\frac{w}{4}-\frac{s}{2},-\frac{w}{4}-\frac{s}{2},\eta\bigg)$ \\ \hline $G$ & $\bigg(\frac{w}{4}-\frac{s}{2},\frac{w}{4}+\frac{s}{2},\zeta\bigg)$ & $\bigg(-\frac{w}{4}-\frac{s}{2},\frac{w}{4}-\frac{s}{2},\zeta\bigg)$ & $\bigg(-\frac{w}{4}+\frac{s}{2},-\frac{w}{4}-\frac{s}{2},\zeta\bigg)$ \\ \hline $H$ & $\bigg(\frac{w}{4}+\frac{s}{2},\frac{w}{4}+\frac{s}{2},\zeta\bigg)$ & $\bigg(-\frac{w}{4}-\frac{s}{2},\frac{w}{4}+\frac{s}{2},\zeta\bigg)$ & $\bigg(-\frac{w}{4}-\frac{s}{2},-\frac{w}{4}-\frac{s}{2},\zeta\bigg)$
	\end{tabular}
	\caption{Remaining construction coordinates defined as the sets $\mathcal{P }_2^{'}$, $\mathcal{P}_2^{''}$ and $\mathcal{P}_2^{'''}$, respectively. Note that $\zeta=l-\frac{s \, \text{tan} \, \alpha}{\sqrt{2}}$ \, and \, $\eta=l-\sqrt{2}s \, \text{tan} \, \alpha$.} 
\end{table}
\\
We begin the discussion with the basis by explaining the decomposition in Fig. 4.2b.: The cuboid at the bottom with characteristic function
 
  \begin{equation*}
    \chi_{\O^1}=\bigg\{\big(x,y,z\big)\in\text{CB}=P_1P_2P_3P_4P_5P_6P_7P_8 \, \bigg\}
  \end{equation*}
\\ 
exhibits shaded areas denoting pieces of the construction that are practically already part of the outgoing tubes. The whole cuboid is constructed by linking the opposite vertical sides of these areas expressing compressive stress states in $x$- and $y$-direction. Taking into account the normal load $Fw^2$ on the bottom its total stress state can be written as a superposition of the compressive ones with an isotropic one resulting in 

  \begin{equation*}
     \sigma_{\O^1}=n_z\otimes n_z
  \end{equation*}
\\
which is nothing else than a uniaxial load in $z$-direction. For the pyramidal shape at the top of Fig. 4.2b. with 

  \begin{equation*}
     \chi_{\O^2}=\bigg\{\big(x,y,z\big)\in\text{PY}=P_9P_{10}P_{11}P_{12}P_{13} \, \bigg\}
  \end{equation*}
\\  
an isotropic state

  \begin{equation*}
     \sigma_{\O^2}=\mathds{1}
  \end{equation*}
\\
turns out to be suitable due to lack of contact area with free space. \\ Now we turn to the upper part of the elementary cell. The associated part of the transition geometry defines the region where tube and perpendicular trusses lap. This is exemplarily emphasized by dotted and dashed lines in Fig. 4.3 for the anterior right quarter of the elementary cell. In Fig. 4.3b. the corresponding region $\O^4$ is shown in enlarged manner and equipped with letters A to H that designate prominent points of the overlap (see the set $\mathcal{P}_2$ above). The point coordinates for the other three quarters are summarized in Table 1. Their notation is quite self-explaining in the following, e.\,g.\ the point $A\in\mathcal{P}_2^{'}$ from the table we call $A^{'}$. For the tubes with

  \begin{equation*}
    \begin{aligned}
      &\chi_{\O^3}=\bigg\{\big(x,y,z\big)\in\text{PE}=P_1P_{11}P_{12}P_{13}AEFG\bigg\} \, , \,
      \chi_{\O^3}^{'}=\bigg\{\big(x,y,z\big)\in\text{PE}=P_2P_9P_{12}P_{13}A^{'}E^{'}F^{'}G^{'}\bigg\} \, , \\
      & \ \ \ \ \ \ \ \ \ \ \ \ \ \ \ \ \ \ \ \ \ \ \ \ \ \ \chi_{\O^3}^{''}=\bigg\{\big(x,y,z\big)\in\text{PE}=P_3P_9P_{10}P_{13}A^{''}E^{''}F^{''}G^{''}\bigg\} \, , \\
      & \ \ \ \ \ \ \ \ \ \ \ \ \ \ \ \ \ \ \ \ \ \ \ \ \ \  \chi_{\O^3}^{'''}=\bigg\{\big(x,y,z\big)\in\text{PE}=P_4P_{10}P_{11}P_{13}A^{'''}E^{'''}F^{'''}G^{'''}\bigg\} 
    \end{aligned}
  \end{equation*}
\\  
we request 

  \begin{equation*}
     \sigma_{\O^3}^{(','',''')}=n_{xyz}^{(','',''')}\otimes n_{xyz}^{(','',''')}
  \end{equation*}
\\
as appropriate uniaxial stress states which is admissible because the cross sectional area of every tube is constant. In Fig. 4.3c.-f. the partial geometries $\O^5$ to $\O^8$ are shown in detail. Since $\O^4=\O^5\cap \, \O^6\cap \, \O^7\cap \, \O^8$ we have, for the whole construction,

  \begin{equation*}   
       \chi_{\O^4}^{(','',''')}=\chi_{\O^5}^{(','',''')}\cap \, \chi_{\O^6}^{(','',''')}\cap \, \chi_{\O^7}^{(','',''')}\cap \, \chi_{\O^8}^{(','',''')}
  \end{equation*}
\\  
with

  \begin{equation*}
    \begin{aligned}
      &\chi_{\O^5}^{(','',''')}=\bigg\{\big(x,y,z\big)\in\text{TP}=A^{(','',''')}B^{(','',''')}C^{(','',''')}D^{(','',''')}E^{(','',''')}H^{(','',''')} \, \bigg\} \, , \\
      &\chi_{\O^7}^{(','',''')}=\bigg\{\big(x,y,z\big)\in\text{TP}=A^{(','',''')}B^{(','',''')}C^{(','',''')}D^{(','',''')}G^{(','',''')}H^{(','',''')} \, \bigg\} \, , \\
      & \ \ \ \ \ \ \ \ \ \ \chi_{\O^6}=\chi_{\O^8}^{'''}=\bigg\{\big(x,y,z\big)\in\text{PE}=B^{'''}E^{'''}F^{'''}H^{'''}DFGH \, \bigg\} \, , \\
      & \ \ \ \ \ \ \ \ \ \ \chi_{\O^6}^{'}=\chi_{\O^8}=\bigg\{\big(x,y,z\big)\in\text{PE}=BEFHD^{'}F^{'}G^{'}H^{'} \, \bigg\} \, , \\
      & \ \ \ \ \ \ \ \ \ \ \chi_{\O^6}^{''}=\chi_{\O^8}^{'}=\bigg\{\big(x,y,z\big)\in\text{PE}=B^{'}E^{'}F^{'}H^{'}D^{''}F^{''}G^{''}H^{''} \, \bigg\} \, , \\
      & \ \ \ \ \ \ \ \ \ \ \chi_{\O^6}^{'''}=\chi_{\O^8}^{''}=\bigg\{\big(x,y,z\big)\in\text{PE}=B^{''}E^{''}F^{''}H^{''}D^{'''}F^{'''}G^{'''}H^{'''} \, \bigg\} 
    \end{aligned}
  \end{equation*}
\\  
being the explicit characteristic functions. It turns out that 

  \begin{equation*}
    \begin{split}
       \sigma_{\O^4}^{(','',''')}=\mathds{1} \ , \ \  \sigma_{\O^5}^{('')}=\sigma_{\O^6}^{('')}=-n_x\otimes n_x \ , \ \  \sigma_{\O^7}^{('')}=\sigma_{\O^8}^{('')}=-n_y\otimes n_y \ , \\ \sigma_{\O^5}^{\{','''\}}=\sigma_{\O^6}^{\{','''\}}=-n_y\otimes n_y \ , \ \ \sigma_{\O^7}^{\{','''\}}=\sigma_{\O^8}^{\{','''\}}=-n_x\otimes n_x \ \ \ \ \ \ 
    \end{split}
  \end{equation*}
\\
is a suitable tupel of stress states, the minus sign representing compressive stress. By inserting these results into (4.1) it is straightforward to check that $\sigma_{\text{cell}}$ fulfils all boundary conditions. 

\subsubsection{Upper bound for the construction}

For the explicit calculation of $\Delta$J we need the cross sectional areas of the tubes $\text{A}_{\text{tu}}$ and the trusses $\text{A}_{\text{tr}}$ from the previous chapter. They read 

\begin{equation}
\text{A}_{\text{tu}}=\frac{s^2}{\text{cos} \, \alpha} \ \ \ \ \ \ \text{and} \ \ \ \ \ \ \ \text{A}_{\text{tr}}=\frac{s^2}{\sqrt{2}} \, \text{tan} \, \alpha
\end{equation}
\\
where the trigonometric functions are not independent due to $\text{cos}(\text{arctan} \, x)=\frac{1}{\sqrt{1+x^2}}$. Thus we just need to specify $\text{tan} \, \alpha$ given by $\text{tan} \, \alpha=\frac{a}{\sqrt{2}s}$ in terms of the elementary cell parameters $w$ and $l$. Because of $s\ll w$ and $a\ll l$ for all admissible values of $F$ the expression

\begin{equation}
\text{tan} \, \alpha \approx \frac{\sqrt{2}w}{4l}
\end{equation} 
\\ 
turns out to be sufficiently exact for calculating an upper bound. \\ To begin with we state volume and perimeter of tubes, trusses and transition geometry of a elementary cell separately. Making use of (4.2) and (4.3) we have

\begin{equation*}
\begin{aligned}
&\vol_{\text{tubes}}=4\cdot\frac{l}{\text{cos}\alpha}\text{A}_{\text{tu}}=4s^2\frac{l}{\text{cos}^2\alpha} \ \ \ , \\
&\vol_{\text{trusses}}=4\cdot\left(\frac{w}{2}+s\right)\text{A}_{\text{tr}}=\frac{4}{\sqrt{2}}\left(\frac{w}{2}+s\right)s^2\text{tan} \, \alpha \ \ \ \ \ \ \text{and} \\
&\vol_{\text{trans}}=4\cdot s^2a=4\sqrt{2}s^3\text{tan} \, \alpha
\end{aligned}
\end{equation*}
\\
as parts of the entire volume 

\begin{equation*}
\begin{aligned}
\vol_{\text{cell}}
&<\vol_{\text{tubes}}+\vol_{\text{trusses}}+\vol_{\text{trans}} \\
&\approx 4s^2l\left(1+\frac{w^2}{8l^2}\right)+\left(\frac{w}{2}+s\right)s^2\frac{w}{l}+\frac{2w}{l}s^3 \ \ \ \ \ \ \ \ \ \ \ \ \
\end{aligned}
\end{equation*}
\\
and

\begin{equation*}
\begin{aligned}
&\per_{\text{tubes}}=16\cdot\frac{l}{\text{cos} \, \alpha}\cdot\frac{1}{2}\sqrt{2s^2+b^2}=8\sqrt{2}sl\frac{1}{\text{cos} \, \alpha}\sqrt{1+\frac{1}{\text{cos}^2\alpha}}\ \ \ , \\
&\per_{\text{trusses}}=8\cdot\left(\sqrt{s^2+\left(\frac{a}{2}\right)^2}+\frac{a}{2}\right)\cdot\left(\frac{w}{2}+s\right) \ \ \ \ \ \ \ \ \ \text{and} \\
&\per_{\text{trans}}=8\cdot\left(sa+\frac{sa}{4}\right)=8\left(\frac{ws^2}{2l}+\frac{ws^2}{8l}\right)=5\frac{ws^2}{l}
\end{aligned}
\end{equation*}
\\
as ingredients for the entire perimeter 

\begin{equation*}
\begin{aligned}
\per_{\text{cell}}  
&<\per_{\text{tubes}}+\per_{\text{trusses}}+\per_{\text{trans}} \\
&\approx 16sl\left(1+\frac{w^2}{16l^2}\right)\left(1+\frac{w^2}{32l^2}\right)+8s\left(\sqrt{1+\frac{w^2}{16l^2}}+\frac{w}{4l}\right)\cdot\left(\frac{w}{2}+s\right)+5\frac{ws^2}{l}
\end{aligned}
\end{equation*}
\\
whereby we employed a Taylor expansion up to first order for $\frac{1}{\text{cos} \, \alpha}$, namely \\ $\frac{1}{\text{cos}(\text{arctan} \, x)}=\sqrt{1+x^2}\approx 1+\frac{1}{2}x^2$, in doing so exploiting that $w\leq l$. \\ Remembering now the equal scaling of compliance and volume and noting the relaxed energy given by $2Fw^2l$ being equal to $8s^2l$ we arrive at

\begin{equation}
\begin{aligned}
\Delta \J_{\text{cell}}
&=\comp_{\text{cell}}+\vol_{\text{cell}}+\varepsilon \, \per_{\text{cell}}-2Fw^2l=2\vol_{\text{cell}}+\varepsilon \, \per_{\text{cell}}-2Fw^2l \\                   
&\lesssim \frac{w^2s^2}{l}+\frac{ws^3}{l}+\varepsilon\left(sl+ws+s^2+\frac{w^2s}{l}+\frac{ws^2} {l}+\frac{w^2s^2}{l^2}+\frac{w^3s}{l^2}+\frac{w^4s}{l^3}\right) \\
&\lesssim F\left(1+\sqrt{F}\right)\frac{w^4}{l}+\varepsilon\sqrt{F}\left(1+\sqrt{F}\right)wl \\
&\lesssim \frac{Fw^4}{l}+\varepsilon\sqrt{F}wl
\end{aligned}
\end{equation}
\\
where we made use of $s^2=\frac{Fw^2}{4}$, the relation $w^n\leq wl^{n-1}$, $n\in\mathbb{N}$, and the fact that $F<1$ implies $1+\sqrt{F}<2$. This expression is quickly minimized concerning the elementary cell height $l$, yielding 
$l_{\min}=\sqrt{\sqrt{F}w^3/\varepsilon}$, so we have

\begin{equation}
\Delta\J_{\text{cell}}\lesssim F^{\frac{3}{4}}\varepsilon^{\frac{1}{2}}w^{\frac{5}{2}}
\end{equation}
\\
as our final result. \\ This compact expression is now easily extended on the whole bulk. The width of the elementary cell decreases by a factor of 2 for every step approaching the boundary when starting at the coarsest layer with elementary cell width $w_\text{coarse}$ in the middle ($z=\frac{1}{2}$). The upper half of the construction shall consist of $n+1$ layers, i.e. there are elementary cell widths $w_k=2^{-k}w_{\text{coarse}}$, $k=0, ..., n\in\mathbb{N}$, with $w_0=w_{\text{coarse}}$. For the elementary cell heights $l_{\min,k}\vcentcolon=l_k$ this implies $l_k=\sqrt{\sqrt{F}w_k^3/\varepsilon}=2^{-\frac{3k}{2}}l_{\text{coarse}}$. Since the height of the full construction is normalized to 1 we can use the geometric series to compute

\begin{equation*}
1=2\sum_{k=0}^{n}l_k=\sqrt{\sqrt{F}w_k^3/\varepsilon} \, \sum_{k=0}^{n}2^{-\frac{3k}{2}}\sim \sqrt{\sqrt{F}w_{\text{coarse}}^3/\varepsilon}
\end{equation*}
\\
so that

\begin{equation}
w_{\text{coarse}}\sim F^{-\frac{1}{6}}\varepsilon^{\frac{1}{3}}
\end{equation}
\\
is the scaling of $w_{\text{coarse}}$. With $\ell$ being the total width of the construction (we suppose w.l.o.g.\ that it has the same extent in $x$- and $y$-direction) it follows that the number of elementary cells within a single layer is given by $\left(\frac{\ell}{w_k}\right)^2$. Taking this as a prefactor and employing the geometric series a second time we finally get

\begin{equation}
\Delta\J_{\text{bulk}}=2\sum_{k=0}^{n}\left(\frac{\ell}{w_k}\right)^2\Delta\J_{\text{cell}}(w_k)\sim\left(\frac{\ell}{w_{\text{coarse}}}\right)^2\Delta\J_{\text{cell}}(w_{\text{coarse}})\lesssim \ell^2F^{\frac{2}{3}}\varepsilon^{\frac{2}{3}}
\end{equation}
\\
as an upper bound for the total bulk excess energy. \\ The last step is now to formulate constraints concerning the parameter $\varepsilon$. The first one reads   

\begin{equation}
\varepsilon\lesssim\ell^3\sqrt{F}
\end{equation}
\\
and stems from the requirement $w_{\text{coarse}}\leq\ell$. The second one is due to
the basic condition $w_k\leq l_k$ that causes the existence of a maximum number of layers $n_{\max}$: since the construction fulfils $w_n=l_n$ at the boundary at most we have

\begin{equation*}
n_{\max}=\frac{2}{3} \, \log_2\left(\frac{\sqrt{F}}{\varepsilon}\right)
\end{equation*}
\\
so that by demanding $n_{\max}\geq 1$ we find 

\begin{equation}
\varepsilon\lesssim \sqrt{F}
\end{equation}
\\
as a second condition.

\subsubsection{Boundary layer}

After halving the coarsest elementary cell width $w_{\text{coarse}}$ $n$-times a boundary layer has to be constructed 
that links all elementary cells of smallest width $w_n=\frac{w_{\text{coarse}}}{2^n}$ with a flat boundary confining the whole array of elementary cells. The extrusion-less 3D generalization of the boundary layer from chapter 4.3.3 is a suitable construction consisting of spherical sectors growing out of the upper transition geometries (perhaps enlarged by some material towards higher $z$) independently of a single elementary cell (see Fig. 4.4). The sectors have cone angle $90^\circ$, base points $B_1=\left(\frac{w_n}{4},-\frac{w_n}{4},l_n-\frac{Fw_n}{4}\right)$, $B_2=\left(\frac{w_n}{4},\frac{w_n}{4},l_n-\frac{Fw_n}{4}\right)$, $B_3=\left(-\frac{w_n}{4},\frac{w_n}{4},l_n-\frac{Fw_n}{4}\right)$, $B_4=\left(-\frac{w_n}{4},-\frac{w_n}{4},l_n-\frac{Fw_n}{4}\right)$, and are chosen large enough such that the shell-like part of their surface forms a gapless boundary by overlap. After filling up the resulting `troughs' with material the final flat boundary can then be attached. \\ A vertical cut through the boundary layer is depicted in Fig. 4.4b. The cut of the spherical sector with the transition geometry is shown in Fig. 4.4c., it especially requires the compressive loads 

\begin{equation*}
\sigma_{\O^9}=-n_x\otimes n_x  \ \ \ \text{and}  \ \ \ \sigma_{\O^{10}}=-n_y\otimes n_y
\end{equation*}
\\
for the shapes shown in Fig. 4.4d. that were built analogously to geometry $\O^1$. We introduce an auxiliary coordinate system with $\xi_1$-, $\xi_2$- and $\xi_3$-axis subject to an orientation as indicated in Fig. 4.4b. to express the spherical geometry of the boundary layer. Keeping this in mind the radius of the shells can be parametrized by 

\begin{equation*}
r=\left(\begin{array}{c} \xi_1 \\ \xi_2 \\ \xi_3 \end{array}\right)
\end{equation*}
with 
\begin{equation*}
| r|=\sqrt{\xi_1^2+\xi_2^2+\xi_3^2}
\end{equation*}
\\
being its absolute value. Finally we get the remaining stress states 

\begin{equation*}
\sigma_{\O^{11}}=0 \ , \ \ \sigma_{\O^{12}}=\mathds{1} \ , \ \  \sigma_{\O^{13}}=\frac{Fw_n^2}{8| r|^4} \ r \, \otimes \, r \ \ \ \text{and}  \ \ \ \sigma_{\O^{14}}=F \, \mathds{1}     
\end{equation*}
\\
where $\sigma_{\O^{13}}$ was computed on condition of being divergence-free, i.e. $\div \, \sigma_{\O^{13}}=0$. \\
Finally we have to check the compatibility of the boundary layer with the scaling of $\Delta\J_{\text{bulk}}$. Because of $w_n\leq l_n\sim\sqrt{\sqrt{F}w_n^3/\varepsilon}$ the scaling of $w_n$ will never exceed 

\begin{equation*}
w_n\sim\frac{\varepsilon}{\sqrt{F}}
\end{equation*}
\\
which causes 

\begin{equation*}
\ell^2w_n\sim\frac{\ell^2\varepsilon}{\sqrt{F}}
\end{equation*}
\\
as an approximation of volume (and compliance) of the boundary layer. This expression is clearly larger than the surface scaling $\ell^2\varepsilon$ due to $F<1$. Accordingly, the excess energy of the boundary layer reads

\begin{equation}
\Delta\J_{\text{bound}}\lesssim\frac{\ell^2\varepsilon}{\sqrt{F}}
\end{equation}
\\
and we have to assure that 

\begin{equation*}
\Delta\J_{\text{bound}}\lesssim\Delta\J_{\text{bulk}}
\end{equation*}
\\
which means that

\begin{equation*}
\frac{\ell^2\varepsilon}{\sqrt{F}} \, \lesssim \, \ell^2F^{\frac{2}{3}}\varepsilon^{\frac{2}{3}}
\end{equation*}
\\
must hold. From this relation the compatibility condition

\begin{equation}
\varepsilon\lesssim F^{\frac{7}{2}}
\end{equation}
\\
can be read off directly. Because of the high power of $F$ it is the crucial one and encloses (4.9) (and also (4.8) if $\ell$ is of order 1). Note that (4.11) precisely coincides with the regime of very small force stemming from the associated lower bound theorem.

\subsection{Extremely small force}

To discuss this extreme case we start with the search for a fitting construction. From the superconducting problem the idea emerges to work with a construction of isolated trees built of elementary cells whose geometry looks like that one employed in the last chapter for the case of very small force. However, the usage of isolated trees turns out to be unfavourable in our situation: they lead to a very high excess energy due to the need of bridging the huge amount of material-free space by a boundary layer. Instead it seems reasonable to make an ansatz closer to that one of chapter 4.1 where the material is distributed more homogeneously. The first important observation is that height and volume of the boundary layer attached to the ultimate layer of elementary cells has to decrease when $F$ decreases to yield a good scaling. Regarding the construction in chapter 4.1 it would be necessary to stack much more elementary cells to reduce the height of the boundary layer. However, this isn`t possible without having additional costs because for all added elementary cells $w>l$ holds - a region of the parameters $w$ and $l$ that isn`t covered by (4.4), the expression for the excess energy. Due to these findings a first try should be to generalize (4.4) by repeating the associated calculation without claiming anything concerning the ratio of $w$ and $l$. This will happen in the following. We directly consider the new part now and refer the reader to chapter 4.1.2 for more details. \\
Let us start with 

\begin{equation*}
\begin{aligned}
\per_{\text{cell}}  
&<\per_{\text{tubes}}+\per_{\text{trusses}}+\per_{\text{trans}} \\
&\approx 8\sqrt{2}sl\sqrt{1+\frac{w^2}{8l^2}} \, \sqrt{2+\frac{w^2}{8l^2}}+8s\left(\sqrt{1+\frac{w^2}{16l^2}}+\frac{w}{4l}\right)\cdot\left(\frac{w}{2}+s\right)+5\frac{ws^2}{l}
\end{aligned}
\end{equation*}
\\
which is the expression for the entire perimeter adapted from chapter 4.1.2. At this step things change compared to chapter 4.1.2 because we don`t consider $w\leq l$ any more and consequently cannot expand the square roots in taylor series. Instead we employ the relation \\

\begin{equation*}
\sqrt{1+\frac{w^2}{8l^2}} \, \sqrt{2+\frac{w^2}{8l^2}} \, < \, 2+\frac{w^2}{8l^2}   
\end{equation*}
\\
to get rid of the first two square roots. The third one can be eliminated by noting that \\

\begin{equation*}
\sqrt{1+\frac{w^2}{16l^2}} \, \lesssim \, 1+\frac{w}{l}
\end{equation*}
\\
where 1 dominates for $w\ll l$ and $\frac{w}{l}$ for $w>>l$. Inserting these two estimates and simplifying the resulting expression we are left with \\

\begin{equation*}
\per_{\text{cell}}\lesssim sl+ws+s^2+\frac{w^2s}{l}+\frac{ws^2}{l}
\end{equation*}
\\      
as the scaling of the perimeter for general $w$ and $l$. \\
Remembering now the equal scaling of compliance and volume and noting the relaxed energy given by $2Fw^2l$ being equal to $8s^2l$ we arrive at

\begin{equation}
\begin{aligned}
\Delta \J_{\text{cell}}
&=\comp_{\text{cell}}+\vol_{\text{cell}}+\varepsilon \, \per_{\text{cell}}-2Fw^2l=2\vol_{\text{cell}}+\varepsilon \,  \per_{\text{cell}}-2Fw^2l \\                   
&\lesssim \frac{w^2s^2}{l}+\frac{ws^3}{l}+\varepsilon\left(sl+ws+s^2+\frac{w^2s}{l}+\frac{ws^2}{l}\right) \\
&\lesssim F\left(1+\sqrt{F}\right)\frac{w^4}{l}+\varepsilon\left(\sqrt{F}wl+\sqrt{F}\left(1+\sqrt{F}\right)\left(w^2+\frac{w^3}{l}\right)\right) \\
&\lesssim\frac{Fw^4}{l}+\varepsilon\sqrt{F}\left(wl+w^2+\frac{w^3}{l}\right)
\end{aligned}
\end{equation}
\\
where we made use of $s^2=\frac{Fw^2}{4}$ and the fact that $F<1$ implies $1+\sqrt{F}<2$. Minimizing this expression concerning the elementary cell height $l$ as before we now get

\begin{equation*}
l_{\min}=\sqrt{w^2+\sqrt{F} w^3/\varepsilon}
\end{equation*}
\\
where the second term of the sum is exactly the result of chapter 4.1.2 dominating if $w\gtrsim F^{-\frac{1}{2}}\varepsilon$. If the first term dominates we have $w_{\text{coarse}}\sim 1$ due to the normalization of the total construction height to 1 which is the case for $\varepsilon^2\gtrsim F$. \\ For the overall construction this means that its layers either consist of larger elementary cells with $w\lesssim l$ and smaller ones with $w\sim l$ or elementary cells with $w\sim l$ exclusively. In the first situation the amount of elementary cells with $w\sim l$ increases if $F$ decreases, in the second one the ratio of $w$ and $l$ is the same in every layer and we have $l\sim w\sim 1$. Consequently, one has to differentiate two cases concerning the optimal scaling law: In the first case (for $F$ not too small) there is still a significant contribution of the scaling found in chapter 4.1.2 - this is true as long at least one layer carrying the old scaling is used because this layer then consists of the coarsest elementary cells. In the second case (for smallest $F$) one finds a new scaling which is dominated by the perimeter contribution of the upper (and lower) construction boundary. \\ Proceeding to the details by performing the calculation of the total excess energy as in chapter 4.1.2 based on (4.12) we get 

\begin{equation}
\Delta\J_{\text{bulk}} \, \lesssim \, \begin{dcases} \ell^2F^{\frac{2}{3}}\varepsilon^{\frac{2}{3}} & \text{if } \ \varepsilon^2\lesssim F \\ \ell^2\sqrt{F}\varepsilon N & \text{if } \ \varepsilon^2\gtrsim F  \end{dcases}
\end{equation}
\\
where $N$ denotes the number of layers dominated by the second scaling term (it appears since while computing this contribution no geometric series can be employed). Note in addition that we could neglect the first term of (4.12) to get the second scaling term due to $\varepsilon^2\gtrsim F$. In any case we choose $\ell$ large enough to ensure $w_{\text{coarse}}\leq\ell$. \\ We are left with the search for a prescription concerning $N$. Since any construction for any $F$ has a flat boundary of cost $\ell^2\varepsilon$ by definition we can be sure that adding or ending up with this contribution, respectively, cannot lead to a suboptimal result (see also Theorem 4.2*). According to this we replace (4.13) by

\begin{equation}
\Delta\J_{\text{bulk}} \, \lesssim \, \begin{dcases} \ell^2\left(F^{\frac{2}{3}}\varepsilon^{\frac{2}{3}}+\varepsilon\right) & \text{if } \ \varepsilon^2\lesssim F \\ \ \ \ \ \ \ \ \ell^2\varepsilon & \text{if } \ \varepsilon^2\gtrsim F  \end{dcases}
\end{equation}
\\
where we inserted the choice $N=F^{-\frac{1}{2}}$ for the number of layers with elementary cells of $l\sim w\sim 1$. The contribution of the boundary layer is compatible with these scalings since it equals $\ell^2w\cdot2^{-N}$ while $w\lesssim F^{-\frac{1}{2}}\varepsilon$ what together with the choice for $N$ yields

\begin{equation*}
\Delta\J_{\text{bound}}\sim\ell^2F^{-\frac{1}{2}}\varepsilon\cdot2^{-\left(F^{-\frac{1}{2}}\right)}\lesssim\ell^2\varepsilon \ \ \ .
\end{equation*} 
\\
Therefore (4.14) is a plausible upper bound that can be written as

\begin{equation}
\Delta\J_{\text{bulk}} \, \lesssim \, \begin{dcases} \ell^2F^{\frac{2}{3}}\varepsilon^{\frac{2}{3}} & \text{if } \ \varepsilon\lesssim F^2 \\ \ \ \, \ell^2\varepsilon & \text{if } \ \varepsilon\gtrsim F^2  \end{dcases}
\end{equation}
\\
in a more compact form.
}%\notinclude

\subsection{Intermediate force}\label{sec:intermediateForce}
In the regime of intermediate forces the optimal energy scaling is also obtained by a two-dimensional construction, which is constantly extended along the third dimension.
Indeed, let $\bar\Omega=(0,\ell)\times(0,1)$ and let $\bar\O\subset\bar\Omega$ and $\bar\sigma:\bar\Omega\to\R_\sym^{2\times2}$ be such
that $\bar\sigma=0$ on $\bar\Omega\setminus\bar\O$, $\div\bar\sigma=0$ in $\bar\Omega$ and $\bar\sigma n=F(e_2\cdot n)e_2$ on the boundary $\partial\bar\Omega$. Then
\begin{equation*}
\O=(0,\ell)\times\bar\O\subset\Omega
\qquad\text{and}\qquad
\sigma(x_1,x_2,x_3)=\left(\begin{smallmatrix}
0&0\\0&\bar\sigma(x_2,x_3)
\end{smallmatrix}\right)
\end{equation*}
satisfy $\sigma\in\Sigma_\ad^{F,\ell,1}(\O)$, and
\begin{equation*}
\Delta\J
=\ell\left(\int_{\bar\Omega}|\bar\sigma|^2\,\d x+\hd^2(\bar\O)+\varepsilon\per_{\bar\Omega}(\bar\O)-2F\ell\right).
\end{equation*}
Hence we have reduced the three-dimensional construction to the problem of finding $\bar\O$ and $\bar\sigma$ with
\begin{equation*}
\Delta\bar\J
\vcentcolon=\int_{\bar\Omega}|\bar\sigma|^2\,\d x+\hd^2(\bar\O)+\varepsilon\per_{\bar\Omega}(\bar\O)-2F\ell
\lesssim\ell\varepsilon^{\frac23}.
\end{equation*}
Such $\bar\O$ and $\bar\sigma$ were constructed in \cite{KoWi14}, where $\Delta\bar\J\lesssim\ell F^{1/3}\varepsilon^{2/3}$ is proved for any $F\leq1-\frac1{16}$.
The three-dimensional extension of this construction is illustrated in \cref{fig:intermediateForces} right.
In this section we want to complement this construction by yet an alternative, valid for $F>\frac1{16}$ and satisfying $\Delta\bar\J\lesssim\ell(1-F)^{1/3}\varepsilon^{2/3}$.
Its three-dimensional extension is illustrated in \cref{fig:intermediateForces} left.
As before, we first specify the two-dimensional elementary cells and estimate their excess cost,
then we specify the boundary cells and estimate their excess cost,
and finally we describe the full construction.

\paragraph{Elementary cell construction.}
The elementary cell is visualized in \cref{fig:intermediateForcesCell}.
Placing the coordinate system origin at its bottom centre, it occupies the volume $(-\frac w2,\frac w2)\times(0,l)$.
We only describe the construction in the right half, the left half being mirror-symmetric.
The material region is partitioned into the simple shapes
\begin{equation*}
\arraycolsep=1.4pt
\begin{array}{rl}
\O^1&=\co\{P_1,P_2,P_3,P_{11}\},\\
\O^2&=\co\{P_4,P_5,P_7,P_{10}\},\\
\O^3&=\co\{P_4,P_8,P_9,P_{11}\},
\end{array}
\qquad
\begin{array}{rl}
\O^4&=\co\{P_5,P_6,P_7\},\\
\O^5&=\co\{P_4,P_{10},P_{11}\}.\\
\
\end{array}
\end{equation*}
with point coordinates
\begin{equation*}
\arraycolsep=1.4pt
\begin{array}{rl}
P_1&=(\frac w2,0),\\
P_2&=(\frac w2,l),\\
P_3&=(\frac w2-s,l),
\end{array}
\qquad
\begin{array}{rl}
P_4&=(\frac w2-s,a),\\
P_5&=(s,l),\\
P_6&=(0,l),
\end{array}
\qquad
\begin{array}{rl}
P_7&=(0,l-a),\\
P_8&=(0,a),\\
P_9&=(0,0),
\end{array}
\qquad
\begin{array}{rl}
P_{10}&=((1-F)\frac w2,0),\\
P_{11}&=(\frac w2-s,0),\\
\
\end{array}
\end{equation*}
where we abbreviated
\begin{equation*}
s=Fw/4
\qquad\text{and}\qquad
a=s\tan\alpha.
\end{equation*}
The angle $\alpha$ is again implicitly and uniquely determined as a function of $w$, $l$, and $s$.
The union of these simple shapes forms the material region.
The stress is defined as $\bar\sigma=\sum_{i=1}^5\chi_{\O^i}\sigma^i$ for
\begin{equation*}
\sigma^1=e_2\otimes e_2,\quad
\sigma^2=\tau\otimes\tau,\quad
\sigma^3=-e_1\otimes e_1,\quad
\sigma^4=\sigma^5=\Id
\end{equation*}
with $\tau=(-\sin\alpha,\cos\alpha)^T$.
It is straightforward to check that $\bar\sigma$ is divergence-free and has unit normal tensile stress
on $[(-\frac w2,-(1-F)\frac w2)\cup((1-F)\frac w2,\frac w2)]\times\{0\}$ and $[(-\frac w2,-(1-\frac F2)\frac w2)\cup(-s,s)\cup((1-\frac F2)\frac w2,\frac w2)]\times\{l\}$,
balanced exactly by the elementary cells on the next and previous layer.

\begin{figure}
  \centering
	\includegraphics[scale=1]{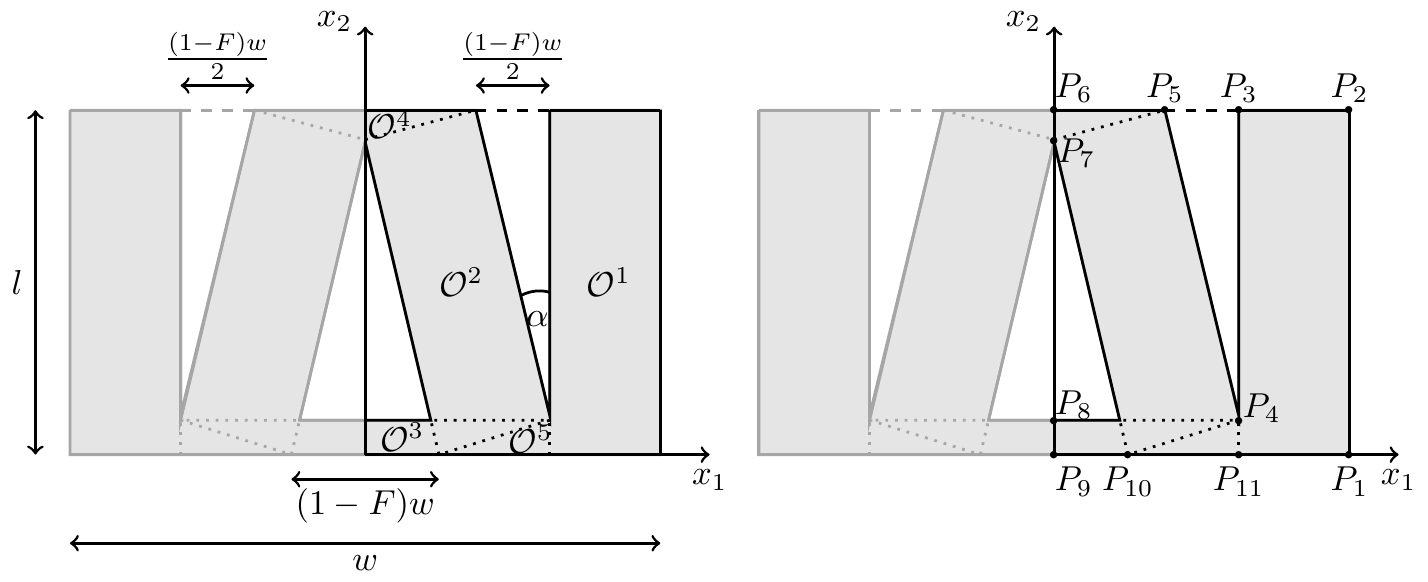} 
	\caption{Illustration of the two-dimensional elementary cell construction.}
	\label{fig:intermediateForcesCell}
\end{figure}

\paragraph{Elementary cell excess cost.}
The volumes of the simple geometries can readily be calculated as
\begin{equation*}
\begin{matrix}
\O^1&\O^2&\O^3&\O^4&\O^5\\
\hline
sl&
\frac s{\cos^2\alpha}(l-a)&
a(\frac w2-s)&
s\tfrac a{2}&
s\tfrac a{2}
\end{matrix}
\end{equation*}
so that the material volume of the elementary cell can be estimated from above as
\begin{equation*}
\vol_\cell
\leq2\sum_{j=1}^{5}\vol(\O^j)
=2\left[sl+\tfrac s{\cos^2\alpha}(l-a)+a(\tfrac w2-s)+sa\right]
\leq2sl(1+\tfrac1{\cos^2\alpha})+wa.
\end{equation*}
Throughout the construction we will ensure $(1-F)w\leq l$, from which we obtain
\begin{equation*}
\tan\alpha\leq\tfrac{(1-F)w}l,
\qquad
a\leq\tfrac{(1-F)sw}l,
\qquad
\tfrac1{\cos^2\alpha}
%=1+\tan^2\alpha
\leq1+\tfrac{(1-F)^2w^2}{l^2},
\qquad
\tfrac1{\cos\alpha}
%\leq\sqrt{1+\tfrac{(1-F)^2w^2}{l^2}}
\leq1+\tfrac{(1-F)^2w^2}{2l^2}.
\end{equation*}
Consequently the volume estimate becomes
\begin{equation*}
\vol_\cell
\leq4sl+\tfrac{2(1-F)^2sw^2}l+\tfrac{(1-F)sw^2}l
\leq4sl+\tfrac{3(1-F)sw^2}l.
\end{equation*}
Likewise, the surface area shared between void and each simple geometry can readily be calculated as
\begin{equation*}
\setcounter{MaxMatrixCols}{11}
\begin{matrix}
\O^1&\O^2&\O^3&\O^4&\O^5\\
\hline
<l&
<2l/\cos\alpha&
<2(1-F)w&
0&
0
\end{matrix}
\end{equation*}
so that the perimeter contribution from the elementary cell can be estimated from above as
\begin{equation*}
\per_\cell
\leq2l+4\tfrac l{\cos\alpha}+4(1-F)w
\leq12l.
\end{equation*}
Finally, noting that the stress nowhere exceeds Frobenius norm $\sqrt2$, the squared $L^2$-norm of the stress on the elementary cell can be estimated via
\begin{multline*}
\comp_\cell
\leq2\left[|\sigma^1|^2\vol(\O^1)+|\sigma^2|^2\vol(\O^2)+2\vol(\O^3\cup\O^4)\right]\\
\leq2sl+2\tfrac s{\cos^2\alpha}(l-a)+2a(w-s)
\leq4sl+4\tfrac{(1-F)sw^2}l.
\end{multline*}
Summarizing, the excess cost contribution of an elementary cell can be estimated via
\begin{equation*}
\Delta\bar\J_\cell
=\vol_\cell+\comp_\cell+\varepsilon\per_\cell-2Fwl
\leq12\left(\tfrac{(1-F)w^3}l+\varepsilon l\right).
\end{equation*}
We now specify the elementary cell height as the minimizer
\begin{equation*}
l=(1-F)^{\frac12}w^{\frac32}\varepsilon^{-\frac12}
\end{equation*}
so that the excess cost contribution of the elementary cell becomes
\begin{equation*}
\Delta\bar\J_\cell
\leq24(1-F)^{\frac12}w^{\frac32}\varepsilon^{\frac12}.
\end{equation*}

\paragraph{Boundary cell construction.}
Again the last layer is composed of special boundary cells of width $w$ and height $l$ (as usual we drop the index $n$).
Placing a coordinate system at the bottom centre of the boundary cell it occupies the volume $(-\tfrac w2,\tfrac w2)\times(0,l)$, whose right half we denote by
\begin{equation*}
\omega\vcentcolon=(0,\tfrac w2)\times(0,l).
\end{equation*}
We choose
\begin{equation*}
l=\sqrt2w/2-2s
\end{equation*}
and fill the boundary cell completely with material.
We describe the stress field only in the right half of the boundary cell, the left half being mirror symmetric.
Let us first abbreviate
\begin{equation*}
s=Fw/4,
\qquad
Z=(w/2,-2s)
\end{equation*}
and $B_Z(r)$ to be the ball of radius $r$ centred at $Z$.
The stress field in the boundary cell is then specified as
\begin{equation*}
\bar\sigma=\sum_{j=6}^{9}\chi_{\O^j}\sigma^j
\end{equation*}
with
\begin{equation*}
\arraycolsep=1.4pt
\begin{array}{rl}
\O^{6}&=B_Z(\sqrt8s)\cap\omega,\\
\O^{7}&=\{Z+tv\,|\,|v|=1,\,t\in(\sqrt8s,\sqrt 2\frac w2),\,Z+\sqrt 2\frac w2v\in \omega\},\\
\O^{8}&=\omega\setminus B_Z(\sqrt 2\frac w2),\\
\O^{9}&=(0,\frac w2)\times(0,\sqrt8s-2s),
\end{array}
\quad
\begin{array}{rl}
\sigma^{6}&=\Id,\\
\sigma^{7}(x)&=\frac{\sqrt8s}{|x-Z|}\frac{(x-Z)\otimes(x-Z)}{|x-Z|^2},\\
\sigma^{8}&=F\Id,\\
\sigma^{9}&=-e_1\otimes e_1
\end{array}
\end{equation*}
(see \cref{fig:2DBoundary}).
Again it is straightforward to check that $\bar\sigma$ is divergence-free
and has boundary stresses compatible with the below layer of elementary cells and the boundary load applied on $\partial\bar\Omega$.

\begin{figure}
	\includegraphics[scale=1,trim=0 10 0 0]{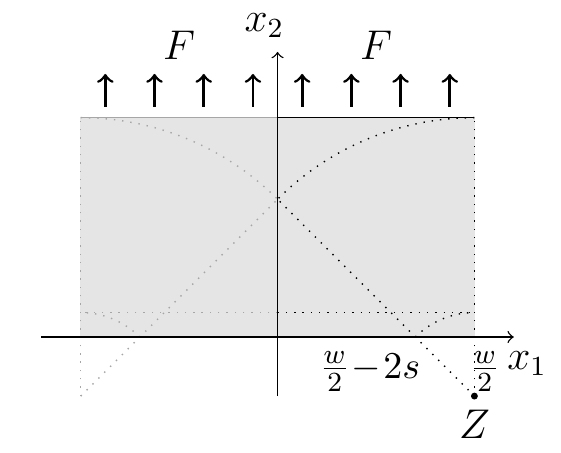}\hfill
	\includegraphics[scale=1]{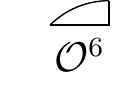}
	\includegraphics[scale=1]{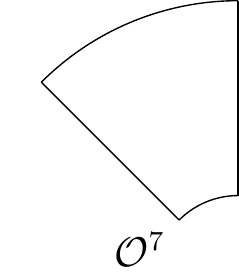}
	\includegraphics[scale=1]{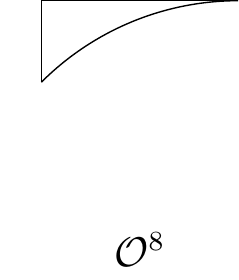}
	\includegraphics[scale=1]{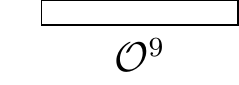}
	\caption{Illustration of the boundary cell construction for intermediate forces.}
	\label{fig:2DBoundary}
\end{figure}

\paragraph{Boundary cell excess cost.}
Since the boundary cell is filled completely with material and since the stress nowhere exceeds Frobenius norm $\sqrt2$,
the volume, free perimeter and compliance of the boundary cell can be estimated as
\begin{equation*}
\vol_\cell=wl<\tfrac{w^2}{\sqrt2},
\qquad
\per_\cell=(1-F)w,
\qquad
\comp_\cell<2\vol_\cell<2\tfrac{w^2}{\sqrt2}.
\end{equation*}
Its contribution to the excess cost thus becomes
\begin{equation*}
\Delta\bar\J_\cell
=\comp_\cell+\vol_\cell+\varepsilon\per_\cell-2Fwl
\leq3\tfrac{w^2}{\sqrt2}+\varepsilon(1-F)w.
\end{equation*}

\paragraph{Full construction.}
Let us first identify the width $w_1$ of the coarsest elementary cells.
Since all layers have to sum up to height $1$ we have
\begin{equation*}
1=2\sum_{k=1}^nl_k
=2^{\frac52}(1-F)^{\frac12}\varepsilon^{-\frac12}w_1^{\frac32}\sum_{k=1}^{n-1}2^{-\frac32k}+\sqrt2w_n
\sim(1-F)^{\frac12}\varepsilon^{-\frac12}w_1^{\frac32}
\end{equation*}
so that we choose $w_1$ to be the largest width smaller than $(1-F)^{-\frac13}\varepsilon^{\frac13}$ such that $\ell$ is an integer multiple of $w_1$.
If $\ell\geq(1-F)^{-\frac13}\varepsilon^{\frac13}$ this implies
\begin{equation*}
w_1\sim(1-F)^{-\frac13}\varepsilon^{\frac13}.
\end{equation*}
As before, the layer heights now sum up to something less than $1$, which is remedied by sufficiently increasing $l_1$.

We will stop layering as soon as $(1-F)w_n\geq l_n$, thus $w_n\sim(1-F)\varepsilon$.
Denoting again the excess cost contribution of the cells in layer $i$ by $\Delta\bar\J_\cell^i$,
the total excess cost of all cells can then be bounded via
\begin{equation*}
\Delta\bar\J
\leq2\sum_{i=1}^n\big(\tfrac\ell{w_i}\big)\Delta\bar\J_\cell^i
\leq2\ell\sum_{i=1}^{n-1}24(1-F)^{\frac12}w_1^{\frac12}2^{\frac{1-i}2}\varepsilon^{\frac12}+\tfrac6{\sqrt2}\ell w_n+2\ell(1-F)\varepsilon
\lesssim\ell(1-F)^{\frac13}\varepsilon^{\frac23}.
\end{equation*}
Note that for the superconductor problem the scaling actually would be $\ell(1-F)^{\frac23}\varepsilon^{\frac23}$.
Our different power of $(1-F)$ stems from the excess compliance in regions $\O^4$ and $\O^2\cap\O^3$.
A more careful construction might be able to recover the better power, however, we do not attempt this here
since $F$ is bounded away from $1$ anyway in the regime of intermediate forces.

\notinclude{
In this chapter we establish a construction for the regime covered by Theorem 4.1. Again it is motivated by the similar pattern within the superconductivity problem, the same applies to its denotation as 'cavity branching'. The associated elementary cell is depicted in Fig. 4.5 where a stress state decomposition is shown as well. The areas with triangular shape are void of material. In the following we concretize admissible stress states for the elementary cell and an upper bound for the functional $\Delta$J concerning a bulk of elementary cells. The procedure is exactly the same as in chapter 4.1.2, so we will not repeat all of its details explicitly. \\ For the reason of clarity it is worthwhile to give everything in two dimensions first and to make the extrusion to the third dimension afterwards. Consequently, any characteristic function and any stress field given in the next chapter have to be understood as

\begin{equation}
\sigma_{\O^j} \, \mapsto \, \left(\begin{array}{cc} \sigma_{\O^j} & 0 \\ 0 & 0 \end{array}\right) \ , \ \ \chi_{\O^j} \, \mapsto \, \chi_{\O^j}\times[0,\ell]
\end{equation}
\\
where we allow us to interchange the former roles of $y$- and $z$-axis. To make the notation more readable we again consider the index $i$ to be fixed.

\subsubsection{Stress state decomposition for the elementary cell}   

Our construction can be divided into ten different areas of stress $\sigma_{\O^j}$ and $\sigma_{\O^j}^{'}$, $j=1,...,6$, where the notation $\sigma_{\O^j}^{'}$ accounts for changes of signs due to the choice of the coordinate system (see Fig. 4.5). Correspondingly the stress state decomposition reads

\begin{equation}
\sigma_{\text{cell}}=\sum_{j=1}^{6}\left(\chi_{\O^j} \, \sigma_{\O^j}+\chi_{\O^j}^{'} \, \sigma_{\O^j}^{'}\right)
\end{equation}
\\ 
with $\chi_{\O^j}$, $\chi_{\O^j}^{'}$ being the associated characteristic functions (note that $\chi_{\O^j}^{'}=\sigma_{\O^j}^{'}=0$ for several $j$). To make the geometry of the construction more explicit we again establish a set $\mathcal{P}$ composed of the points

\begin{equation*}
\begin{aligned}
&P_1=\left(-\frac{w}{2},0\right) \, , \, P_2=\left(-\frac{w}{2},l\right) \, , \, P_3=\left(-\frac{w}{2}+\frac{Fw}{4},l\right) \, , \, P_4=\left(-\frac{w}{2}+\frac{Fw}{4},\frac{Fw}{4} \, \text{tan} \, \alpha\right) \, , \\ 
&P_5=\left(-\frac{Fw}{4},l\right) \, , \, P_6=\left(\frac{Fw}{4},l\right) \, , \, P_7=\left(\frac{w}{2}-\frac{Fw}{4},\frac{Fw}{4} \, \text{tan} \, \alpha\right) \, , \, P_8=\left(\frac{w}{2}-\frac{Fw}{4},l\right) \, , \\  &P_9=\left(\frac{w}{2},l\right) \, , \, P_{10}=\left(\frac{w}{2},0\right) \, , \, P_{11}=\left(\frac{w}{2}-\frac{Fw}{4},0\right)\, , \, P_{12}=\left(\frac{w}{2}-\frac{Fw}{2},0\right) \, , \,  P_{13}=\left(-\frac{w}{2}+\frac{Fw}{2},0\right) \, , \\
&P_{14}=\left(-\frac{w}{2}+\frac{Fw}{4},0\right)\, , \, P_{15}=\left(0,l-\frac{Fw}{4} \, \text{tan} \, \alpha\right) \, , \, P_{16}=\left(-\frac{(1-F)w}{2},\frac{Fw}{4} \, \text{tan} \, \alpha\right) \, , \\ & \ \ \ \ \ \ \ \ \ \ \ \ \ \ \ \ \ \ \ \ \ \ \ \ \ \ \ \ \ \ \ \ \, P_{17}=\left(\frac{(1-F)w}{2},\frac{Fw}{4} \, \text{tan} \, \alpha\right)
\end{aligned}
\end{equation*}
\\
which enable us to state the characteristic functions. To do so we proceed as in chapter 4.1.1 except for the difference that we now have to deal with rectangles (RE), triangles (TR) and a trapezoid (TZ) as underlying shapes. \\ Since the grade of complexity of the elementary cell is comparatively low we give the characteristic functions and stress fields separately, namely 

\begin{equation*}
\begin{aligned}
&\chi_{\O^1}=\bigg\{\big(x,y,z\big)\in\text{RE}=P_1P_2P_3P_{14}\bigg\} \ , \ \ \
\chi_{\O^1}^{'}=\bigg\{\big(x,y,z\big)\in\text{RE}=P_8P_9P_{10}P_{11}\bigg\} \ , \\ 
&\chi_{\O^2}=\bigg\{\big(x,y,z\big)\in\text{TR}=P_5P_6P_{15}\bigg\} \ , \ \ \ \ \ \ \chi_{\O^3}=\bigg\{\big(x,y,z\big)\in\text{TZ}=P_4P_5P_{15}P_{16}\bigg\} \ , \\
&\chi_{\O^3}^{'}=\bigg\{\big(x,y,z\big)\in\text{TZ}=P_6P_7P_{15}P_{17}\bigg\} \ , \ \ \chi_{\O^4}=\bigg\{\big(x,y,z\big)\in\text{TR}=P_4P_{13}P_{16}\bigg\} \ , \\
&\chi_{\O^4}^{'}=\bigg\{\big(x,y,z\big)\in\text{TR}=P_7P_{12}P_{17}\bigg\} \ , \ \ \ \ \, \chi_{\O^5}=\bigg\{\big(x,y,z\big)\in\text{TR}=P_4P_{13}P_{14}\bigg\} \ , \\ 
&\chi_{\O^5}^{'}=\bigg\{\big(x,y,z\big)\in\text{TR}=P_7P_{11}P_{12}\bigg\} \ , \ \ \ \ \, \chi_{\O^6}=\bigg\{\big(x,y,z\big)\in\text{TZ}=P_{12}P_{13}P_{16}P_{17}\bigg\}
\end{aligned}
\end{equation*}
\\  
and

\begin{equation*}
\begin{aligned}
&\sigma_{\O^1}=\sigma_{\O^1}^{'}=\sigma_{\O^5}=\sigma_{\O^5}^{'}=\left(\begin{array}{cc} 0 & 0 \\ 0 & 1 \end{array}\right) \ , \ \ \sigma_{\O^2}=\left(\begin{array}{cc} 1 & 0 \\ 0 & 1 \end{array}\right) \ , \\
&\sigma_{\O^3}=\left(\begin{array}{cc} \text{sin}^2\alpha & \text{cos} \, \alpha \ \text{sin} \, \alpha \\ \text{cos} \, \alpha \ \text{sin} \, \alpha & \text{cos}^2\alpha \end{array}\right) \ , \ \ \sigma_{\O^3}^{'}=\left(\begin{array}{cc} \text{sin}^2\alpha &  
-\text{cos} \, \alpha \ \text{sin} \, \alpha \\  - \text{cos} \, \alpha \ \text{sin} \, \alpha & \text{cos}^2\alpha \end{array}\right) \ , \\
&\sigma_{\O^4}=\left(\begin{array}{cc} -\text{cos}^2\alpha & \text{cos} \, \alpha \ \text{sin} \, \alpha \\ \text{cos} \, \alpha \ \text{sin} \, \alpha & \text{cos}^2\alpha \end{array}\right) \ , \ \ \sigma_{\O^4}^{'}=\left(\begin{array}{cc} -\text{cos}^2\alpha &
-\text{cos} \, \alpha \ \text{sin} \, \alpha \\   - \text{cos} \, \alpha \ \text{sin} \, \alpha & \text{cos}^2\alpha \end{array}\right) \ , \\
& \ \ \ \ \ \ \ \ \ \ \ \ \ \ \ \ \ \ \ \ \ \ \ \ \ \ \ \ \ \ \ \ \ \ \ \ \ \sigma_{\O^6}=\left(\begin{array}{cc} -1 & 0 \\ 0 & 0 \end{array}\right) 
\end{aligned}
\end{equation*}
\\
where we employed a coordinate system as depicted in Fig. 4.5. Note that any $\sigma_{\O^j}$ ($\sigma_{\O^j}^{'}$) is piecewise constant. 

\subsubsection{Upper bound for the construction}

For the computation of the excess energy $\Delta$J we adapt the procedure presented in chapter 4.1.2. However, there are some small modifications, namely the change of (4.3) to

\begin{equation}
\text{tan} \, \alpha\approx\frac{(1-F)w}{2l}
\end{equation}
\\
and the new condition 

\begin{equation}
(1-F)w\leq l
\end{equation}
\\
replacing $w\leq l$. With $\frac{Fw}{4 \, \text{cos} \, \alpha}$ being the width of the tubes with stress states $\sigma_{\O^3}$ and $\sigma_{\O^3}^{'}$, and $\frac{Fw}{4} \, \text{tan} \, \alpha$ the height of the truss with stress state $\sigma_{\O^6}$ we calculate

\begin{equation}
\begin{aligned}
\Delta \J_{\text{cell}}
&=\comp_{\text{cell}}+\vol_{\text{cell}}+\varepsilon \, \per_{\text{cell}}-2Fwl=2\vol_{\text{cell}}+\varepsilon \, \per_{\text{cell}}-2Fwl  \\ 
&<Fwl+4\frac{l}{\text{cos} \, \alpha}\cdot\frac{Fw}{4 \, \text{cos} \, \alpha}+2(1-F)w\cdot\frac{Fw}{4} \, \text{tan} \, \alpha+2\varepsilon\left(l+(1-F)w+2\frac{l}{\text{cos} \, \alpha}\right)-2Fwl \\
&\sim\frac{F(1-F)^2w^3}{l}+\varepsilon \, \frac{(1-F)^2w^2}{l}+\varepsilon l+\varepsilon(1-F)w \\
&\lesssim\frac{(1-F)^2w^3}{l}+\varepsilon l
\end{aligned}
\end{equation}
\\
where we made use of (4.19) and $F<1$ in the last step. Accordingly we find the optimal elementary cell height $l_{\min}=(1-F)\sqrt{w^3 / \varepsilon}$, inserting it into (4.20) again we get 

\begin{equation}
\Delta\J_{\text{cell}}\lesssim (1-F) \, \varepsilon^{\frac{1}{2}}w^{\frac{3}{2}}
\end{equation}
\\
as final result for any single cell of our construction. \\ Setting the total height of the construction to 1 as before meaning 

\begin{equation*}
1=2\sum_{k=0}^{n}l_k=(1-F)\sqrt{w_k^3/\varepsilon} \, \sum_{k=0}^{n}2^{-\frac{3k}{2}}\sim (1-F)\sqrt{w_{\text{coarse}}^3/\varepsilon}
\end{equation*}
\\
it follows

\begin{equation}
w_{\text{coarse}}\sim (1-F)^{-\frac{2}{3}}\varepsilon^{\frac{1}{3}}
\end{equation}
\\
for the width of the coarsest elementary cell. Now we have to account for the fact that the construction established so far is extruded to 3D by introducing an additional factor of $\ell$. Having $\frac{\ell}{w_k}$ elementary cells per layer we accordingly compute 

\begin{equation}
\Delta\J_{\text{bulk}}=2\sum_{k=0}^{n}\ell\left(\frac{\ell}{w_k}\right)\Delta\J_{\text{cell}}(w_k)\sim\left(\frac{\ell^2}{w_{\text{coarse}}}\right)\Delta\J_{\text{cell}}(w_{\text{coarse}})\lesssim\ell^2(1-F)^{\frac{2}{3}}\varepsilon^{\frac{2}{3}}
\end{equation}
\\
as an upper estimate for the excess energy of the cavity branching. \\ As in chapter 4.1.2 two constraints of $\varepsilon$ still have to be set up. The requirement $w_{\text{coarse}}\leq\ell$ now yields 

\begin{equation}
\varepsilon\lesssim \ell^3(1-F)^2
\end{equation} 
\\
while 

\begin{equation}
\varepsilon\lesssim (1-F)^{-1}
\end{equation}
\\
serves to substitute (4.9) due to $(1-F)w_k\leq l_k$. 

\subsubsection{Boundary layer}

A suitable boundary layer, the two-dimensional version of that one in chapter 4.1.3 (look there for a precise description), is displayed in Fig. 4.6. It has to be extruded to 3D as the rest of the construction, so the prescriptions (4.16) hold here as well. The width of its bases corresponds to that one of the stress area $\sigma_{\O^2}$ of elementary cell $n$ or, equivalently, of $\sigma_{\O^1}$ and $\sigma_{\O^1}^{'}$ together. Within such a cell, the base points of the circular segments having central angle $90^\circ$ read $B_1=\left(-\frac{w_n}{2},l_n-\frac{Fw_n}{4}\right)$, $B_2=\left(0,l_n-\frac{Fw_n}{4}\right)$ and $B_3=\left(\frac{w_n}{2},l_n-\frac{Fw_n}{4}\right)$. Writing the radius of the circular segments as

\begin{equation*}
r=\left(\begin{array}{c} \xi_1 \\ \xi_2 \end{array}\right)
\end{equation*}
\\
with absolute value 

\begin{equation*}
| r|=\sqrt{\xi_1^2+\xi_2^2}
\end{equation*}
\\
we find

\begin{equation*}
\sigma_{\O^7}=0 \ , \ \ \sigma_{\O^8}=\mathds{1} \ , \ \ \sigma_{\O^9}=\frac{Fw_n}{2\sqrt{2}| r|^3} \ r \, \otimes \, r \ \ \ \text{and}  \ \ \ \sigma_{\O^{10}}=F \, \mathds{1}     
\end{equation*}
\\
to be an appropriate couple of stress states where $\sigma_{\O^9}$ (analogously to $\sigma_{\O^{13}}$ in chapter 4.1.1) can easily be checked to be divergence-free.  \\
Since $(1-F)w_n\sim l_n$ implies $w_n\sim\varepsilon$ we see, analogously to chapter 4.1.2, that the excess energy of the boundary layer is  

\begin{equation}
\Delta\J_{\text{bound}}\lesssim\ell^2\varepsilon
\end{equation}
\\
and shows a balance between volume and perimeter contribution for the actual construction. Consequently, we have compatibility with $\Delta\J_{\text{bulk}}$ if
\begin{equation*}
\ell^2\varepsilon \, \lesssim \, \ell^2(1-F)^{\frac{2}{3}}\varepsilon^{\frac{2}{3}}
\end{equation*}
\\
so that 

\begin{equation}
\varepsilon\lesssim (1-F)^2
\end{equation}   
\\
constitutes the last constraint of $\varepsilon$ which is equivalent to (4.24) for $\ell\sim 1$.
}%\notinclude

\subsection{Large force}
Here we describe the construction from \cref{fig:constructionOverview} right in which the elementary cells consist of full material blocks with small roughly conical holes drilled inside.
The construction of an admissible stress field with optimal scaling requires substantially more effort than in the superconductor setting.
Due to the estimate of the previous section it suffices to provide a construction of the correct energy scaling for $1-F<\frac1{64}$, which we will assume in the following.

\paragraph{Elementary cell material distribution.}
As before, we use coordinates such that the elementary cell occupies the volume
\begin{equation*}
\omega\vcentcolon=(-\tfrac w2,\tfrac w2)^2\times(0,l).
\end{equation*}
The elementary cell is a full block of material with five cone-like holes arranged as in \cref{fig:elementaryCellLargeForce} left:
a big central one pointing upwards and four identical downward pointing ones of half the diameter, centred within each quarter of the elementary cell.
Their bases are such that four elementary cells of half the width can be attached on top of $\omega$ such that the cones seamlessly fit together.
In more detail, abbreviate the solid of revolution around the $x_3$-axis with radius function $f$ by
\begin{equation*}
K[f]=\{x\in\R^2\times(0,l)\,|\,x_1^2+x_2^2\leq f(x_3)^2\}.
\end{equation*}
If we denote the cone tips by
\begin{equation*}
P_0=(0\ 0\ l)^T
\qquad\text{ and }\qquad
P_1,...,P_4=(\pm\tfrac{w}{4} \ \pm\tfrac{w}{4} \ 0)^T
%\left(\begin{array}{l} \pm\frac{w}{4} \\ \pm\frac{w}{4} \\ \ \, 0 \end{array}\right) \ , \ \ P_5=\left(\begin{array}{l} 0 \\ 0 \\ l \end{array}\right)
\end{equation*}
and the $x_3$-dependent radius of the four smaller cones by $R(x_3)$,
then the material distribution $\O$ within the elementary cell is given by
\begin{equation*}
\omega\setminus(K_0\cup\ldots\cup K_4)
\end{equation*}
for the cones
\begin{equation*}
K_0=P_0-K[2R]
\qquad\text{ and }\qquad
K_i=P_i+K[R]
\quad\text{for }i=1,\ldots,4.
\end{equation*}
Note that for symmetry reasons we chose the central cone to be identical to the four smaller ones, only flipped upside down and dilated by the factor $2$ along the first two dimensions.
We next specify the radius function $R(x_3)$.
For $\varepsilon=0$ it is known that the optimal microstructures everywhere exhibit a material volume fraction of $F$.
We aim to match this exactly on each horizontal slice of the elementary cell and thus require
\begin{equation*}
4\pi R(x_3)^2+\pi(2R(l-x_3))^2
=w^2(1-F)
=\vcentcolon 4\pi a^2.
\end{equation*}
Furthermore, the elementary cells will be imposed with a normal stress at their top and bottom boundary,
and since this normal stress cannot immediately be redirected to a nonvertical direction without producing infinite compliance at the cone tips,
we require
\begin{equation*}
R'(0)=0.
\end{equation*}
This is one of the central differences to the corresponding construction in the superconductor setting.
Together with the previous condition this also implies $R'(l)=0$.
One possible solution is to take the cross-sectional area of the cones to be the quintic polynomial fulfilling the above conditions,
\begin{equation*}
R(x_3)=a\bar R(\tfrac{x_3}{l})
\qquad\text{with }
\bar R(z)=\sqrt{10z^3-15z^4+6z^5}.
\end{equation*}
Note that the material volume of the elementary cell is $Fw^2l$ by construction.

\begin{figure}
	\includegraphics[scale=0.65, angle=90, trim = 15 0 0 0]{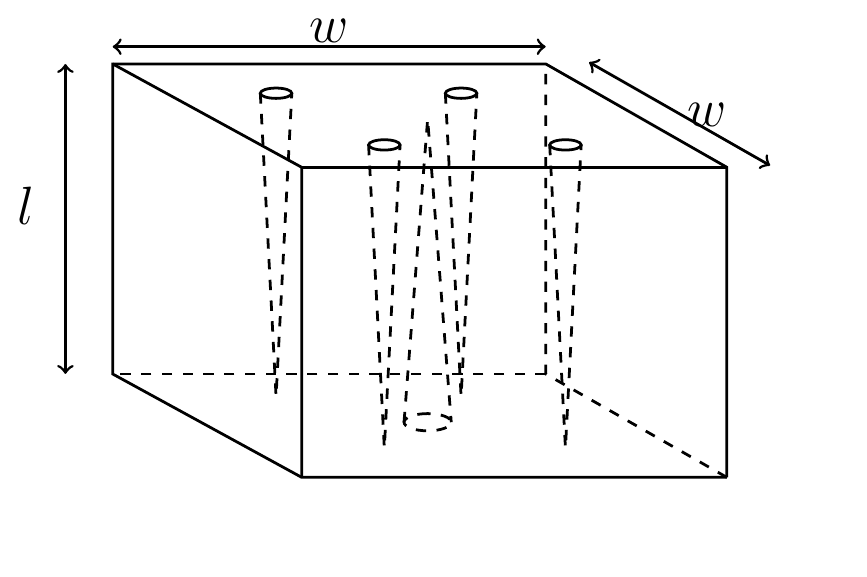} \hfill
	\includegraphics[scale=1]{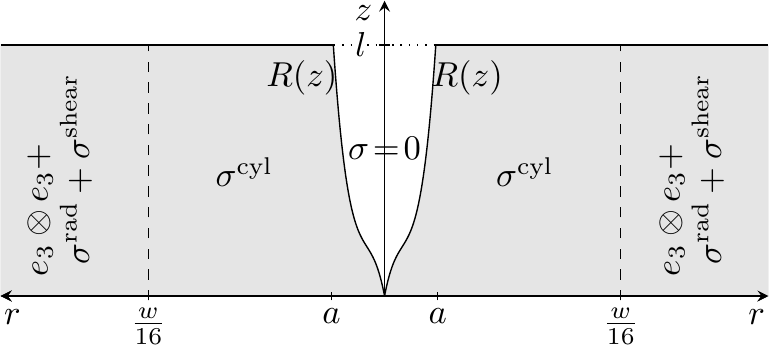}
	\caption{Sketch of the elementary cell (left) and vertical cut through one of the small cones (right).}
	\label{fig:elementaryCellLargeForce}
\end{figure}

\paragraph{Elementary cell stress field construction.}
The construction of the stress field within the elementary cell is substantially more complicated than in the superconductor setting.
We are looking for a divergence-free $\sigma:\omega\to\R^{3\times3}_\sym$ with $\sigma=0$ on $C_0\cup\ldots\cup C_4$
and unit tensile stress $\sigma n=n$ on the top and bottom material boundary $[(-\frac w2,\frac w2)^2\times\{0,l\}]\setminus(\overline{C_0}\cup\ldots\cup\overline{C_4})$.
We will first construct a radially symmetric stress field within cylinders around the cones
and in a second step construct the stress field in the complement of these cylinders.
In more detail, let $C[r]$ denote the infinite cylinder around the $x_3$-axis of radius $r$, then we define
\begin{equation*}
C_0=\omega\cap C[\tfrac w8]
\qquad\text{and}\qquad
C_i=\omega\cap(P_i+C[\tfrac w{16}])
\quad\text{for }i=1,\ldots,4
\end{equation*}
to be disjoint cylinders around the cones $K_0,\ldots,K_4$.
Since we assumed $1-F<\frac1{64}$, these cylinders contain the cones completely.
We then compose the stress field via
\begin{equation*}
\sigma(x)=\begin{cases}
\sigmain(x)&\text{if }x\in C_0\cup\ldots\cup C_4,\\
e_3\otimes e_3+\sigmarad(x)+\sigmashear(x)&\text{else,}
\end{cases}
\end{equation*}
where $\sigmain$, $\sigmarad$, and $\sigmashear$ will be specified below.

The stress field in $C_0,\ldots,C_4$ is reduced to a stress field $\bar\sigma:\omega\cap C[\frac w{16}]\to\R^{3\times3}_\sym$ with $\bar\sigma=0$ on $K[R]$ and unit tensile stress at the top and bottom boundary via
\begin{equation*}
\sigmain(x)=\begin{cases}
\hfil\bar\sigma(x-P_i)&\text{if }x\in C_i\text{ for }i\in\{1,\ldots,4\},\\
\left(\begin{smallmatrix} 4\bar\sigma_{11} & 4\bar\sigma_{12} & -2\bar\sigma_{13} \\ 4\bar\sigma_{12} & 4\bar\sigma_{22} & -2\bar\sigma_{23} \\ -2\bar\sigma_{13} & -2\bar\sigma_{23} & \bar\sigma_{33} \end{smallmatrix}\right)
(\tfrac{x_1}2,\tfrac{x_2}2,l-x_3)&\text{if }x\in C_0.
\end{cases}
\end{equation*}
By construction, $\sigmain$ is divergence-free if $\bar\sigma$ is.
We take $\bar{\sigma}$ radially symmetric, where the simple intuition behind our ansatz is
that the material $(C[\frac w{16}]\cap\omega)\setminus K[R]$ is decomposed into infinitely many infinitesimally thin chalice-like shells
that are each in equilibrium on their own and do not interact.
We parameterize the family of chalices by their radius $s$ at $x_3=0$ and denote their radius at height $x_3$ by $R_s(x_3)$ (thus $R_0=R$).
The vertical stress component in each of these chalices shall be $1$ (which is known to be preferred from the case $\varepsilon=0$).
The balance of forces acting in vertical direction on the union of chalices for $s\in[0,S]$ with $S\in[0,\frac{w}{16}]$ arbitrary then requires
\begin{equation*}
\pi\left(R_S(x_3)^2-R(x_3)^2\right)=\pi\left(R_S(0)^2-R(0)^2\right)=\pi S^2
\qquad\text{for all }x_3\in(0,l)
\end{equation*}
so that
\begin{equation*}
R_s(x_3)=\sqrt{s^2+R(x_3)^2}.
\end{equation*}
Using cylindrical coordinates $(r,\varphi,z)$ and denoting the radial, circumferential and vertical unit vectors by $e_r$, $e_{\varphi}$ and $e_z$, respectively,
the tangent plane to a chalice is spanned by $t(r,\varphi,z)=R_{s(r,z)}'(z) \, e_r+e_z$ and $e_{\varphi}$,
where $s(r,z)=\sqrt{r^2-R(z)^2}$ is the parameter of the chalice containing the circle of radius $r$ at height $z$.
Since the chalices do not interact, they can only have in-plane stress, and due to radial symmetry all shear stresses are zero so that $\hat{\sigma}$ must be of the form
\begin{equation*}
\bar{\sigma}(r,\varphi,z)=\alpha(r,z) \, t(r,\varphi,z)\otimes t(r,\varphi,z)+\beta(r,z) \, e_{\varphi}\otimes e_{\varphi}.
\end{equation*}
So as to have vertical stress component 1 we must have $\alpha=1$, thus
\begin{equation*}
\bar\sigma(r,\varphi,z)
=\left(\begin{matrix}\bar\sigma_{rr} & \bar\sigma_{r\varphi} & \bar\sigma_{rz} \\
\bar\sigma_{r\varphi} & \bar\sigma_{\varphi\varphi} & \bar\sigma_{\varphi z} \\
\bar\sigma_{rz} & \bar\sigma_{\varphi z} & \bar\sigma_{zz} \end{matrix}\right)(r,\varphi,z)
=\left(\begin{smallmatrix}(R_{s(r,z)}')^2 & 0 & R_{s(r,z)}' \\ 0 & \beta(r,z) &  0 \\ R_{s(r,z)}' & 0 & 1 \end{smallmatrix}\right)
\end{equation*}
in cylindrical coordinates and the basis $e_r,e_\varphi,e_z$.
Note that
\begin{equation*}
R_{s(r,z)}'(z)=\frac{R(z) \, R'(z)}{\sqrt{s(r,z)^2+R(z)^2}}=\frac{R(z) \, R'(z)}{r}.
\end{equation*}
The condition for $\bar\sigma$ to be divergence-free in cylindrical coordinates reads
\begin{align*}
\frac{1}{r}\frac{\partial}{\partial r}(r \, \bar\sigma_{rr})+\frac{1}{r}\frac{\partial\bar\sigma_{r\varphi}}{\partial\varphi}+\frac{\partial\bar\sigma_{rz}}{\partial z}-\frac{\bar\sigma_{\varphi\varphi}}{r}&=0,\\
\frac{1}{r}\frac{\partial}{\partial r}(r \, \bar\sigma_{r\varphi})+\frac{1}{r}\frac{\partial\bar\sigma_{\varphi\varphi}}{\partial\varphi}+\frac{\partial\bar\sigma_{\varphi z}}{\partial z}+\frac{\bar\sigma_{r\varphi}}{r}&=0, \\
\frac{1}{r}\frac{\partial}{\partial r}(r \, \bar\sigma_{rz})+\frac{1}{r} \, \frac{\partial\bar\sigma_{\varphi z}}{\partial\varphi}+\frac{\partial\bar\sigma_{zz}}{\partial z}&=0,
\end{align*}
from which we derive
\begin{equation*}
\beta(r,z)
=\frac{\partial}{\partial r}(r \, \bar\sigma_{rr})+r\frac{\partial\bar\sigma_{rz}}{\partial z}
=-\frac{(R(z) \, R'(z))^2}{r^2}+(R(z) \, R'(z))'
% =R'(z)^2+R(z) \, R''(z)-\frac{R(z)^2 \, R'(z)^2}{r^2}
% =\left(\frac{R(z)^2}{2}\right)''-\left(\frac{R(z) \, R'(z)}{r}\right)^2 \\
% =R_{s(r,z)}(z)\,R_{s(r,z)}''(z)
=\frac{(R(z)^2)''}2-\frac{(R(z)\,R'(z))^2}{r^2}.
\end{equation*}
% (where we used $R_{s(r,z)}(z)=r$ and $R_s''(z)=\frac{(R(z)R'(z))'}{R_s(z)}-\frac{(R(z)R'(z))^2}{R_s(z)^3}$).
By construction $\bar\sigma$ is divergence-free, zero inside the cones, and it satisfies the correct boundary condition at the top and bottom.

\begin{remark}[Improved stress field]
By letting the chalices interact via a normal stress component one could reduce the excess compliance further,
but it turns out that this only leads to a reduction by a constant factor, leaving the scaling the same.
% The optimal radial/circumferential stress field could be found as follows: one simply superimposes
% 
% \begin{equation*}
% \sigma_{\text{rad}}(r,\varphi,z)=\gamma(r,z) \, e_r\otimes e_r+\delta(r,z) \, e_{\varphi}\otimes e_{\varphi}
% \end{equation*}
% \\
% onto $\hat{\sigma}$ where $\sigma_{\text{rad}}$ is another divergence-free, radially symmetric stress field with $\sigma_{\text{rad},rr}(r_c(z),z)=0$. By describing it as $\sigma_{\text{rad}}=\text{D}u$ for a radial displacement $u(r,\varphi,z)=f(r,z) \, e_r$ and solving $\Delta u=0$ with zero Neumann boundary condition on $\partial C$, one can explicitly calculate the optimal $\sigma_{\text{rad}}$ as
% 
% \begin{equation*}
% \begin{aligned}
% \gamma(r,z)
% &=\kappa_1\left(\frac{1}{r_c(z)^2}-\frac{1}{r^2}\right)+\frac{\kappa_2}{2} \ \log\left(\frac{r}{r_c(z)}\right) \\
% \delta(r,z)
% &=\kappa_1\left(\frac{1}{r_c(z)^2}+\frac{1}{r^2}\right)+\frac{\kappa_2}{2}\left(\log\left(\frac{r}{r_c(z)}\right)+1\right)
% \end{aligned}
% \end{equation*}
% \\
% for $\kappa_1, \kappa_2\in\R$ to be chosen freely to minimize the overall excess compliance.
\end{remark}

In the complement of the five cylinders we employ a vertical unit tensile stress plus two stress fields $\sigmarad,\sigmashear:\omega\setminus(C_0\cup\ldots\cup C_4)\to\R^{3\times3}_\sym$
that compensate the boundary stress induced by $\sigmain$ on $\partial C_0\cup\ldots\cup\partial C_4$.
In more detail, $\sigmarad$ will be a pure inplane stress-field in each horizontal cross-section, accommodating the normal stress on the cylinder walls imposed by $\sigmain$,
while $\sigmashear$ will be a stress field accommodating the shear stress on the cylinder walls.
The stress $\sigmarad$ will be radially symmetric around each cylinder and will itself be contained in a slightly larger cylinder
of radius $T=\frac{3w}{32}$ at the four outer cones and radius $2T$ at the inner one (so that those larger cylinders do not yet overlap).
It is reduced to a stress field $\tilde\sigma:\omega\cap(C[T]\setminus C[\frac w{16}])\to\R^{3\times3}_\sym$ via
\begin{equation*}
\sigmarad(x)=\begin{cases}
\hfil\tilde\sigma(x-P_i)&\text{if }x\in\omega\cap(P_i+C[T])\setminus C_i\text{ for }i\in\{1,\ldots,4\},\\
\left(\begin{smallmatrix} 4\tilde\sigma_{11} & 4\tilde\sigma_{12} & -2\tilde\sigma_{13} \\ 4\tilde\sigma_{12} & 4\tilde\sigma_{22} & -2\tilde\sigma_{23} \\ -2\tilde\sigma_{13} & -2\tilde\sigma_{23} & \tilde\sigma_{33} \end{smallmatrix}\right)
(\tfrac{x_1}2,\tfrac{x_2}2,l-x_3)&\text{if }x\in\omega\cap C[2T]\setminus C_0.
\end{cases}
\end{equation*}
As already suggested, $\tilde\sigma$ shall be of the form
\begin{equation*}
\tilde\sigma(r,\varphi,z)=\left(\begin{matrix}\tilde\sigma_{rr} & 0 & 0 \\ 0 & \tilde\sigma_{\varphi\varphi} & 0 \\ 0 & 0 & 0 \end{matrix}\right) (r,\varphi,z)
\end{equation*}
in cylindrical coordinates and the basis $e_r,e_\varphi,e_z$.
It has to satisfy $\div \, \tilde\sigma=0$ as well as the boundary conditions
\begin{equation*}
\tilde\sigma_{rr}(T,\varphi,z)=0
\qquad\text{ and }\qquad
\tilde\sigma_{rr}(\tfrac{w}{16},\varphi,z)=\bar\sigma_{rr}(\tfrac{w}{16},\varphi,z)=\left(\frac{R(z) \, R'(z)}{w/16}\right)^2=\vcentcolon  g_1(z)
\end{equation*}
at the outer and inner cylinder boundary (recall that it compensates the normal stress of $\sigmain$ at the inner boundary, which is given by $\bar\sigma_{rr}$).
A possible choice of $\tilde\sigma$ is
% \begin{equation*}
% \bar\sigma(r,\varphi,z)
% =\tfrac{\kappa(z)}{T^2}[e_\varphi\otimes e_\varphi+e_r\otimes e_r]+\tfrac{\kappa(z)}{r^2}[e_\varphi\otimes e_\varphi-e_r\otimes e_r]
% \end{equation*}
\begin{equation*}
\tilde\sigma_{rr}(r,\varphi,z)
=\kappa(z)(\tfrac1{T^2}-\tfrac1{r^2})
\qquad\text{and}\qquad
\tilde\sigma_{\varphi\varphi}(r,\varphi,z)
=\kappa(z)(\tfrac1{T^2}+\tfrac1{r^2})
\end{equation*}
for
\begin{equation*}
\kappa(z)
=\frac{g_1(z)}{\frac{1}{T^2}-\frac{1}{(w/16)^2}}
%=-\frac{9}{1280}w^2g_1(z)
=-\tfrac{9}{5}(R(z)R'(z))^2
\end{equation*}
(this choice is actually the equilibrium stress under the above conditions, which is a straightforward exercise to derive).
\notinclude{
The optimal one can e.\,g.\ be computed as $\text{D}u$ for a 2D radial displacement $u(\vec{r})=f(r) \, \vec{r}$ (with $r=|\vec{r}|$) satisfying $\Delta u=0$: Writing

\begin{equation*}
\text{D}u(\vec{r})=f(r) \, \mathds{1}+f'(r) \, \frac{\vec{r}\otimes\vec{r}}{r}=\frac{\text{D}u+\text{D}u^T}{2}
\end{equation*}
\\
the solution has to fulfil

\begin{equation*}
\div\left(\frac{\text{D}u+\text{D}u^T}{2}\right)=\Delta u=\left(f''(r)+\frac{3f'(r)}{r}\right) \, \vec{r}=0
\end{equation*}
\\
from which we get

\begin{equation*}
f(r)=\kappa_1+\frac{\kappa_2}{r^2}  
\end{equation*}
\\
and consequently find

\begin{equation*}
\hat{\sigma}_2=\frac{\text{D}u+\text{D}u^T}{2}=\text{D}u=\kappa_1 \, \mathds{1}+\frac{\kappa_2}{r^2} \, \left(\mathds{1}-2 \, \frac{\vec{r}}{r}\otimes\frac{\vec{r}}{r}\right) \ \ \ .
\end{equation*}
\\
Taking care of the Neumann boundary conditions $\hat{\sigma}_2(R,\varphi,z)\cdot \vec{r}=0$ and $\hat{\sigma}_2(\frac{w}{16},\varphi,z)\cdot\vec{r}=g_1(z)$ we further get the coefficients

\begin{equation*}
\kappa_1=\frac{\kappa_2}{R^2} \ \ , \ \ \  \kappa_2=\frac{g_1(z)}{\frac{1}{R^2}-\frac{1}{(w/16)^2}}=-\frac{9}{1280}w^2g_1(z)
\end{equation*}
\\
which result in

\begin{equation*}
\hat{\sigma}_{2,rr}=\frac{9}{1280}w^2g_1(z)\left(\frac{1}{r^2}-\frac{1}{R^2}\right) \ , \ \ \hat{\sigma}_{2,\varphi\varphi}=-\frac{9}{1280}w^2g_1(z)\left(\frac{1}{r^2}+\frac{1}{R^2}\right)
\end{equation*}
\\
for the non-vanishing entries of $\hat{\sigma}_2$. \\
}%\notinclude

The stress $\sigmashear$ finally is chosen as follows:
For fixed height $z$ it has to accomodate the (vertical) shear stress on the cylinder walls $\partial C_i$ induced by $\sigmain$,
which equals $\bar\sigma_{rz}(\frac w{16},\varphi,z)=\frac{16}wR(z)R'(z)$ on $\partial C_1,\ldots,\partial C_4$,
while it is given by $-2\bar\sigma_{rz}(\frac w{16},\varphi,l-z)=-2\frac{16}wR(l-z)R'(l-z)=-\frac{32}wR(z)R'(z)$ on $\partial C_0$.
Thus, for $g_2(z)=\frac{R(z)R'(z)}{w/16}$ we require
\begin{alignat*}{2}
\sigmashear n
&=2g_2(x_3) \, e_3 &\text{ on }&\partial C_0,\\
\sigmashear n
&=-g_2(x_3) \, e_3 &\text{ on }&\partial C_i, \, i=1,...,4
\end{alignat*}
with $n$ the unit outward normal to $\omega\setminus(C_0\cup\ldots\cup C_4)$.

\begin{remark}[Inplane equilibrium of shear stresses]
Since $\partial C_0$ has exactly twice the circumference of $\partial C_1\cup\ldots\cup\partial C_4$,
the shear forces at cross-section $z$ are in equilibrium
(which essentially is a consequence of our choice of $R$).
Therefore, $\sigmashear_{33}$ will be zero.
This not only simplifies the construction of $\sigmashear$ but is also important for the energy scaling:
a nonzero $\sigmashear_{33}$ would yield a large compliance in combination with the overall vertical unit tensile stress $e_z\otimes e_z$.
Such complications only arise due to the symmetry condition on the stress field and are absent in the superconductor setting.
\end{remark}

\begin{figure}
	\includegraphics[scale=0.29]{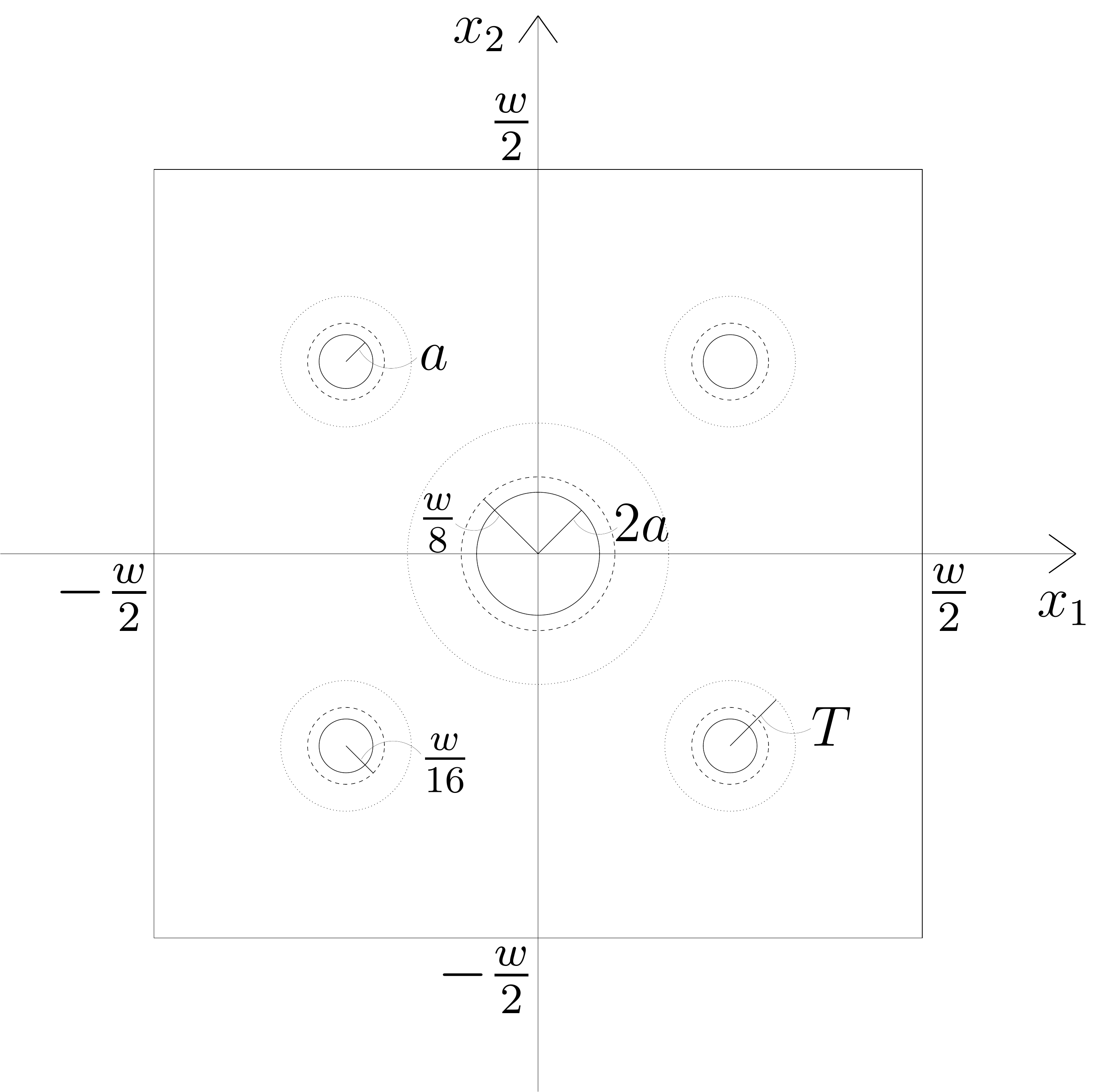}\hfill
	\includegraphics[scale=0.29]{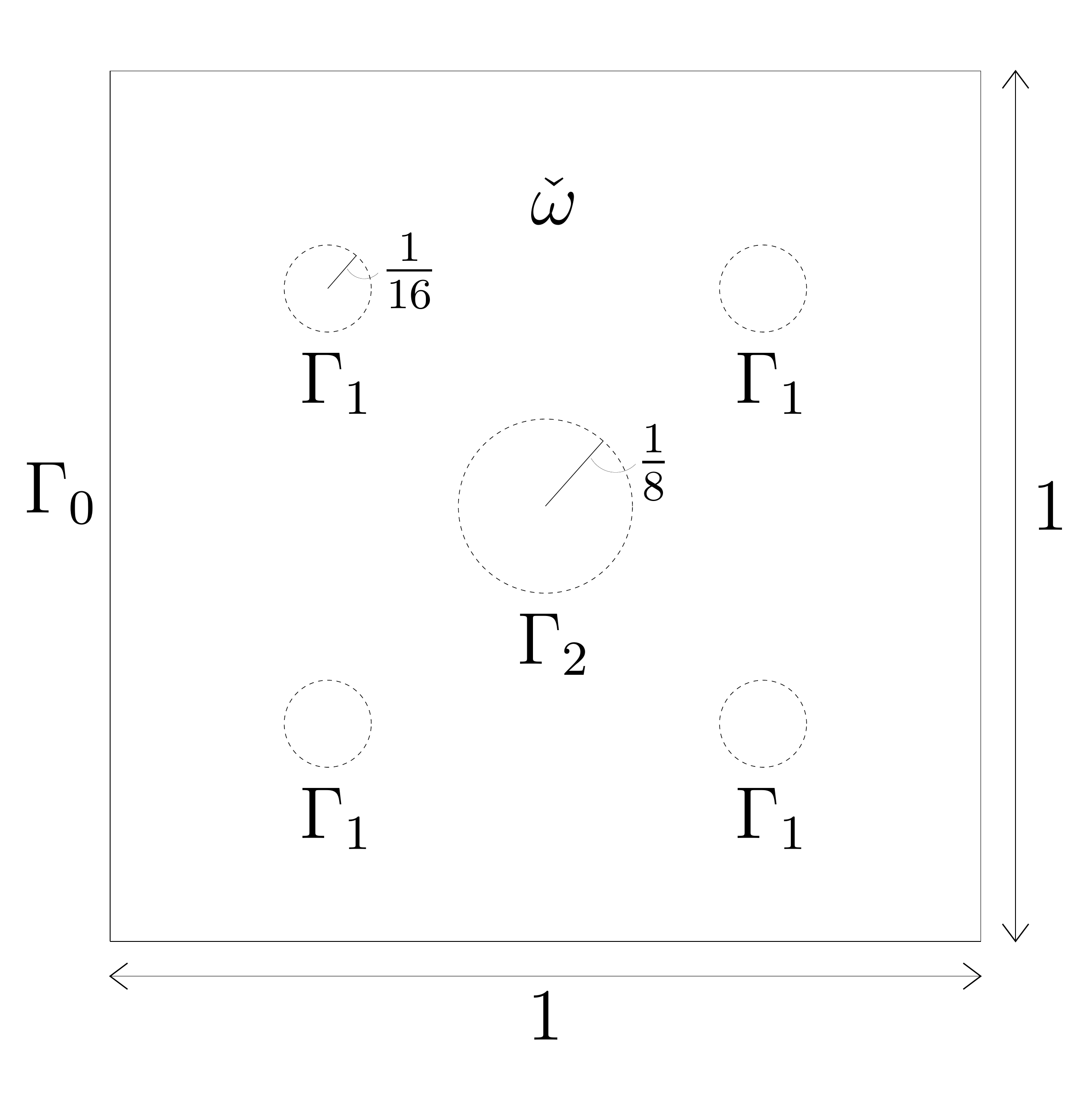}
	\caption{Top view of the elementary cell, original (left) and rescaled version (right).}
	\label{fig:crossSections}
\end{figure} 

Consider now a cross-section of $\omega\setminus(C_0\cup\ldots\cup C_4)$ and rescale it by the factor $\frac1w$ to have sidelength $1$.
Let us denote that rescaled cross-section by $\check\omega$ and its boundary components by $\Gamma_0,\Gamma_1,\Gamma_2$ as indicated in \cref{fig:crossSections}.
Now let $u:\check\omega\to\R$ satisfy
\begin{alignat*}{2}
\Delta u
&=0&\text{ in }&\check\omega, \\
\partial_nu
&=0&\text{ on }&\Gamma_0,\\
\partial_nu
&=-1&\text{ on }&\Gamma_1, \\
\partial_nu
&=2&\text{ on }&\Gamma_2,
\end{alignat*}
where $\partial_nu$ denotes the normal derivative of $u$ (a solution exists due to $\int_{\partial\check\omega}\partial_nu \, \d x=0$; this is related to the previous remark).
Furthermore, let the two-dimensional stress field $\check\sigma: \check\omega\rightarrow\R^{2\times 2}_\sym$ satisfy
\begin{alignat*}{2}
\div \, \check\sigma
&=\nabla u&\text{ in }&\check\omega, \\ 
\check\sigma\cdot n
&=0&\text{ on }&\partial\check\omega
\end{alignat*}
(such a $\check\sigma$ exists due to $\int_{\check\omega}\div \, \check\sigma \, \d x=\int_{\check\omega}\nabla u \, \d x=0=\int_{\partial\check\omega}\check\sigma\cdot n \, \d x$,
where the second equality holds by symmetry of $u$)
and define $\sigmasheartilde:\check\omega\times(0,1)\to\R^{3\times 3}_\sym$ as
\begin{equation*}
\sigmasheartilde(x)=\left(\begin{matrix}-g_3'(x_3) \, \check\sigma(x_1,x_2) & g_3(x_3) \, \nabla u(x_1,x_2) \\ g_3(x_3) \, \nabla u(x_1,x_2)^T & 0 \end{matrix}\right)
\end{equation*} 
for $g_3(z)=z^2(1-z)^2$.
By construction we have $\div \, \sigmasheartilde=0$ and
\begin{equation*}
\sigmasheartilde(x)n=\begin{cases}
\hfil0&\text{on }[\Gamma_0\times(0,1)]\cup[\check\omega\times\{0,1\}],\\
-g_3(x_3)e_3 & \text{on }\Gamma_1\times(0,1),\\
\hfil 2g_3(x_3)e_3 & \text{on }\Gamma_2\times(0,1)
\end{cases}
\end{equation*}
for $n$ the unit outward normal.
Thus, noting that $g_2(z)=240\frac{a^2}{lw} \, g_3\left(\frac{z}{l}\right)$, the divergence-free stress field
\begin{equation*}
\sigmashear(x)=240\left(\frac{a}{w}\right)^2\left(\begin{matrix}(\frac{w}{l})^2\sigmasheartilde_{11}& (\frac{w}{l})^2\sigmasheartilde_{12} & (\frac{w}{l})\sigmasheartilde_{13} \\ (\frac{w}{l})^2\sigmasheartilde_{12} & (\frac{w}{l})^2\sigmasheartilde_{22} & (\frac{w}{l})\sigmasheartilde_{23} \\ (\frac{w}{l})\sigmasheartilde_{13} & (\frac{w}{l})\sigmasheartilde_{23} & \sigmasheartilde_{33} \end{matrix}\right)\left(\frac{x_1}{w},\frac{x_2}{w},\frac{x_3}{l}\right)
\end{equation*}
satisfies all required boundary conditions.

\paragraph{Elementary cell excess cost.}
As usual we denote the elementary cell contribution to compliance, volume, and perimeter by $\comp_\cell$, $\vol_\cell$, and $\per_\cell$, respectively.
Furthermore we again evenly distribute $\J^{*,F,\ell}_0=2F\ell^2$ over all of $\Omega$ so that its proportion within an elementary cell is $2Fw^2l$.
Recalling now $\vol_\cell=Fw^2l$, the excess cost contribution of the elementary cell is
\begin{multline*}
\Delta\J_\cell
=\comp_\cell+\vol_\cell+\varepsilon\per_\cell-2Fw^2l
=\comp_\cell-\vol_\cell+\varepsilon\per_\cell\\
=\int_{\omega\setminus\bigcup_{i=0}^4K_i}|\sigma|^2-1\,\d x+\varepsilon\per_\cell
=\int_{\omega\setminus\bigcup_{i=0}^4C_i}|\sigma|^2-1\,\d x
+\sum_{i=0}^4\int_{C_i\setminus K_i}|\sigma|^2-1\,\d x
+\varepsilon\per_\cell.
\end{multline*}
For $i\in\{1,\ldots,4\}$ we now estimate (for better readability we drop the argument $z$ of $R$ and $\bar R$)
\begin{align*}
\int_{C_i\setminus K_i}|\sigma|^2-1\,\d x
&=\int_{\omega\cap C[\frac w{16}]\setminus K[R]}|\bar\sigma|^2-1\,\d x\\
&=\int_{0}^{l}\int_{R}^{w/16}r\left(2\left(\frac{R\, R'}{r}\right)^2+\left(\frac{R \, R'}{r}\right)^4+\beta(r,z)^2\right) \, \d r \, \d z \\
&\lesssim\int_{0}^{l}\int_{R}^{w/16}r\left(2\left(\frac{R \, R'}{r}\right)^2+\left(\frac{R \, R'}{r}\right)^4+\left(\left(R^2\right)''\right)^2\right) \, \d r \, \d z \\
&=\int_{0}^{1}\int_{\bar R}^{w/(16a)}\frac{2a^4}{lr}\left(\bar R\,\bar R'\right)^2+\frac{a^6}{l^3r^3}\left(\bar R\,\bar R'\right)^4+\frac{ra^6}{l^3}\left(\left(\bar R^2\right)''\right)^2 \, \d r \, \d z \\
&=\int_{0}^{1}\frac{2a^4}{l}\log\left(\frac{w}{16a\bar R}\right)\left(\bar R\,\bar R'\right)^2+\frac{a^6}{2l^3}\left(\frac{1}{\bar R^2}-\frac{256a^2}{w^2}\right)\left(\bar R\,\bar R'\right)^4\\
&\hspace{30ex}+\frac{a^6}{2l^3}\left(\frac{w^2}{256a^2}-\bar R^2\right)\left(\left(\bar R^2\right)''\right)^2 \, \d z \\
&\lesssim\frac{a^4}{l}\log\frac{w}{a}\int_{0}^{1}\left(\bar R\,\bar R'\right)^2\,\d z+\frac{a^4}{l}\int_{0}^{1}|\log(16\bar R)|\left(\bar R\,\bar R'\right)^2\,\d z\\
&\hspace{20ex}+\frac{a^6}{l^3}\int_0^1\frac{\left(\bar R\,\bar R'\right)^4}{\bar R^2}\,\d z+\frac{a^4w^2}{l^3}\int_0^1\left(\left(\bar R^2\right)''\right)^2 \, \d z\\
&\lesssim\frac{a^4}{l}\log\frac{w}{a}+\frac{a^4w^2}{l^3},
\end{align*}
where in the last step we used the boundedness of all integrals as well as $a<\frac w2$.
The analogous computation for $i=0$ yields the same scaling.
Similarly,
\begin{align*}
\int_{\omega\setminus\bigcup_{i=0}^4C_i}|\sigma|^2-1\,\d x
&=\int_{\omega\setminus\bigcup_{i=0}^4C_i}\big| e_z\otimes e_z+\sigmarad+\sigmashear\big|^2-1 \, \d x\\
&=\int_{\omega\setminus\bigcup_{i=0}^{4}C_i}\big|\sigmarad+\sigmashear\big|^2 \, \d x \\
&\lesssim\int_{\omega\setminus\bigcup_{i=0}^{4}C_i}\big|\sigmarad\big|^2 \, \d x+\int_{\omega\setminus\bigcup_{i=0}^{4}C_i}\big|\sigmashear\big|^2 \, \d x.
\end{align*}
We estimate both integrals separately. We have
\begin{align*}
\int_{\omega \setminus \bigcup_{i=0}^{4}C_i}\!\!\big|\sigmarad\big|^2 \, \d x
&=4\int_0^l\!\!\int_{\frac w{16}}^T\!\!r\!\left(\tilde\sigma_{rr}^2\!+\!\tilde\sigma_{\varphi\varphi}^2\right)\d r\,\d z
\!+\!\!\!\int_0^l\!\!\int_{\frac w8}^{2T}\!\!r\!\left((4\tilde\sigma_{rr}(\tfrac r2,\varphi,z))^2\!+\!(4\tilde\sigma_{\varphi\varphi}(\tfrac r2,\varphi,z))^2\right)\d r\,\d z\\
&\lesssim\int_0^l\int_{\frac w{16}}^Tr\left(\tilde\sigma_{rr}^2+\tilde\sigma_{\varphi\varphi}^2\right)\,\d r\,\d z\\
&=\int_0^l\int_{\frac w{16}}^Tr\left(\frac95\right)^2(R\,R')^4\left[\left(\frac1{T^2}-\frac1{r^2}\right)^2+\left(\frac1{T^2}+\frac1{r^2}\right)^2\right]\,\d r\,\d z\\
&=\frac{81a^8}{25w^2l^3}\int_0^1\int_{\frac 1{16}}^{\frac 3{32}}r(\bar R\,\bar R')^4\left[\left(\frac{32^2}{9}-\frac1{r^2}\right)^2+\left(\frac{32^2}{9}+\frac1{r^2}\right)^2\right]\,\d r\,\d z\\
&\lesssim\frac{a^6}{l^3},
\end{align*}
where in the last step we used the boundedness of the integral and that $a<\frac w2$.
Also,
\begin{align*}
\int_{\omega\setminus\bigcup_{i=0}^{4}C_i}\big|\sigmashear\big|^2 \, \d x
&=lw^2\int_0^1\!\!\int_{\check\omega}240^2\left(\frac{a}{w}\right)^4\left(\left(\frac{w}{l}\right)^4g_3'(x_3)^2\big|\check\sigma\big|^2+2\left(\frac{w}{l}\right)^2g_3(x_3)^2 \, \big|\nabla u\big|^2\right) \, \d x \\
&\lesssim\frac{a^4w^2}{l^3}\int_0^1\!\!\int_{\check\omega}g_3'(x_3)^2\big|\check\sigma(x_1,x_2)\big|^2\,\d x+\frac{a^4}{l}\int_0^1\!\!\int_{\check\omega}g_3(x_3)^2 \, \big|\nabla u(x_1,x_2)\big|^2 \, \d x \\
&\lesssim \frac{a^4w^2}{l^3}+\frac{a^4}{l},
\end{align*}
where again we used boundedness of the integrals.
Finally, the surface area of all material-void interfaces within an elementary cell can be estimated as
\begin{equation*}
\per_\cell=\hd^2(K_0)+\ldots+\hd^2(K_4)\lesssim al
\end{equation*}
so that the total excess cost contribution can be summarized as
\begin{equation*}
\Delta\J_\cell
\lesssim\frac{a^4}{l}\log\frac{w}{a}+\frac{a^4w^2}{l^3}+\varepsilon al
\lesssim\underbrace{(1-F)^2 \, \frac{w^4}{l} \, \big|\log(1-F)\big|}_{=\vcentcolon\Delta \J_1(l)}+\underbrace{(1-F)^2 \, \frac{w^6}{l^3}}_{=\vcentcolon\Delta \J_2(l)}+\underbrace{\varepsilon wl\sqrt{1-F}}_{=\vcentcolon\Delta \J_3(l)}.
\end{equation*}
Now $\Delta\J_1(l)+\Delta\J_3(l)$ is minimized by
\begin{equation*}
l_1=(1-F)^{\frac{3}{4}}\big|\log(1-F)\big|^{\frac12}w^{\frac32}\varepsilon^{-\frac12},
\end{equation*}
while $\Delta\J_2(l)+\Delta\J_3(l)$ is minimized (up to a constant factor) by
\begin{equation*}
l_2=(1-F)^{\frac{3}{8}}w^{\frac54}\varepsilon^{-\frac{1}{4}}.
\end{equation*}
Consequently, since both $\Delta\J_1$ and $\Delta\J_2$ are decreasing in $l$, the optimal elementary cell height for fixed width $w$ is given by
\begin{equation*}
l_{\opt}(w)=\max\big\{l_1,l_2\big\}.
\end{equation*}
Thus we obtain
\begin{align*}
\Delta\J_{\cell}
&\lesssim\Delta\J_1(l_{\opt})+\Delta\J_2(l_{\opt})+\Delta\J_3(l_{\opt}) \\
&\lesssim\Delta\J_1(l_1)+\Delta\J_2(l_2)+\max\big\{\Delta\J_3(l_1),\Delta\J_3(l_2)\big\} \\
&\lesssim\max \left\{(1-F)^{\frac{5}{4}}|\log(1-F)|^{\frac12}\varepsilon^{\frac{1}{2}}w^{\frac{5}{2}}, \, (1-F)^{\frac{7}{8}}\varepsilon^{\frac{3}{4}}w^{\frac{9}{4}}\right\}.
\end{align*}

\paragraph{Full construction.}
To satisfy the boundary condition $\sigma n=Fe_3$ at the top and bottom boundary of $\Omega$,
we stack infinitely many layers of elementary cells on top of each other, that is, we choose $n=\infty$.
Let us now identify the with $w_1$ of the coarsest elementary cells.
The total height of the construction has to equal $1$, hence
\begin{equation*}
1
=2\sum_{k=0}^{\infty}l_k
=2\sum_{k=0}^{\infty}l_\opt(w_k)
\sim l_{\opt}(w_1)
\end{equation*}
using the fact that $l_\opt(w_k)\in l_\opt(w_1)[2^{\frac{(1-k)3}2},2^{\frac{(1-k)5}4}]$ and the geometric series.
We deduce
\begin{equation*}
w_1\sim\min\left\{(1-F)^{-\frac{1}{2}}|\log(1-F)|^{-\frac{1}{3}}\varepsilon^{\frac{1}{3}}, \, (1-F)^{-\frac{3}{10}}\varepsilon^{\frac{1}{5}}\right\}.
\end{equation*}
Due to our assumption $\varepsilon^{\frac{2}{3}}\lesssim(1-F)|\log(1-F)|^{-\frac{1}{3}}$ in the regime of large forces, the minimum is always the first of both terms.
As in the previous sections, we in fact choose $w_1$ to be the width closest to $(1-F)^{-\frac{1}{2}}|\log(1-F)|^{-\frac{1}{3}}\varepsilon^{\frac{1}{3}}$
such that the resulting total construction height does not exceed $1$ and $\ell$ is an integer multiple of $w_1$,
for which we require $\ell\geq(1-F)^{-\frac{1}{2}}|\log(1-F)|^{-\frac{1}{3}}\varepsilon^{\frac{1}{3}}$.
As before, the total height $1$ is then restored by slightly increasing the height $l_1$ of the coarsest elementary cells.

Finally, abbreviating the excess cost contribution of the cells in layer $i$ (of which there are $(\ell/w_i)^2$) by $\Delta\J_\cell^i$,
the total excess cost is estimated (exploiting the geometric series) as
\begin{multline*}
\Delta\J
=2\sum_{i=0}^{\infty}\left(\tfrac{\ell}{w_i}\right)^2\Delta\J_{\cell}^i
\sim\left(\tfrac{\ell}{w_{1}}\right)^2\Delta\J_{\cell}^1
\lesssim\ell^2(1-F)^{\frac{5}{4}}|\log(1-F)|^{\frac12} \varepsilon^{\frac{1}{2}}w_{1}^{\frac{1}{2}}\\
\sim\ell^2(1-F)|\log(1-F)|^{\frac{1}{3}}\varepsilon^{\frac{2}{3}}.
\end{multline*}

\subsection{Extremely large force.}
The construction for this regime is trivial, we take $\O=\Omega$ to be a full block of material without any holes. The equilibrium stress in this case can readily be checked to be $\sigma=Fe_3\otimes e_3$, so
\begin{equation*}
\Delta\J
=\comp^{F,\ell}(\Omega)+\vol(\Omega)+\varepsilon\per_\Omega(\Omega)-2F\ell^2
=F^2\ell^2+\ell^2+0-2F\ell^2
=(1-F)^2\ell^2.
\end{equation*}

\section{Acknowledgements}
This work was supported by the Deutsche Forschungsgemeinschaft (DFG, German Research Foundation) under the priority program SPP 2256, grant WI 4654/2-1, and under Germany's Excellence Strategy EXC 2044 -- 390685587, Mathematics M\"unster: Dynamics--Geometry--Structure. It was further supported by the Alfried Krupp Prize for Young University Teachers awarded by the Alfried Krupp von Bohlen und Halbach-Stiftung.

\bibliographystyle{abbrv}
\bibliography{references}

\end{document}